\newtheorem{Theorem}{Theorem}[section]
\newtheorem{Definition}[Theorem]{Definition}  
\newtheorem{Lemma}[Theorem]{Lemma}	
\newtheorem{Corollary}[Theorem]{Corollary}
\newtheorem{Remark}[Theorem]{Remark}
\numberwithin{equation}{section}
\newcommand{\C}{\mathbb{C}} % komplexe
\newcommand{\R}{\mathbb{R}} % reelle
\newcommand{\Q}{\mathbb{Q}} % rationale
\newcommand{\Z}{\mathbb{Z}} % ganze
\newcommand{\N}{\mathbb{N}} % natuerliche
\newcommand{\T}{\mathbb{T}}
\newcommand{\abs}[1]{\left\vert #1 \right\vert}
\newcommand{\paren}[1]{\left( #1 \right)}
\newcommand{\bra}[1]{\left[ #1 \right]}
\newcommand{\ang}[1]{\left\langle #1 \right\rangle}
\newcommand{\mc}[1]{\mathcal{#1}}
\newcommand{\DEL}[1]{}
\newcommand{\ps@bw}{\ps@empty%
  \renewcommand{\@oddfoot}%
  {\kuerzel \hfil {\footnotesize --- bitte wenden ---}}}
\newcommand{\ps@last}{\ps@empty%
  \renewcommand{\@oddfoot}{\kuerzel \hfil {\tiny\texturl}}}
\newcommand{\itemii}[1]{\refstepcounter{enumi}\hfill
  \hbox to 0.5\textwidth {\hskip \leftmargin \hskip -\labelwidth
    \hskip -\labelsep \hbox to\labelwidth {\makelabel{\@itemlabel}}%
    \hskip \labelsep #1\hfil}}
\newcommand{\itemiii}[1]{\refstepcounter{enumii}\hfill
  \hbox to 0.5\textwidth {\hskip \leftmargin \hskip -\labelwidth
    \hskip -\labelsep \hbox to\labelwidth {\makelabel{\@itemlabel}}%
    \hskip \labelsep #1\hfil}}
\DeclareMathOperator{\supp}{supp}
\DeclareMathOperator{\loc}{loc}
\DeclareMathOperator{\const}{const.}
\DeclareMathOperator{\essinf}{ess\ inf}
\DeclareMathOperator{\odd}{odd}
\DeclareMathOperator{\per}{per}
\DeclareMathOperator{\mono}{mono}
\begin{document}

%\pagestyle{empty}
%Change the numbering of equations
%Do not start new numbering in each section
%\makeatletter
%\@removefromreset{equation}{section}
\makeatother
%Equation numbering: (Chapter.Equation)
%\numberwithin{equation}{chapter}
\makeatletter
%\@removefromreset{Theorem}{section}
\makeatother
%Equation numbering: (Chapter.Equation)
%\numberwithin{Theorem}{chapter}
  % Keine Seitenzahlen im Vorspann

%\noindent \textsf{A.HIRSCH, W.REICHEL  \\
%INSTITUTE FOR ANALYSIS, KARLSRUHE INSTITUTE OF TECHNOLOGY (KIT), \\
%D-76128 KARLSRUHE, GERMANY} \\
%\textit{E-mail addresses:} andreas.hirsch@kit.edu; wolfgang.reichel@kit.edu

\title[Real-valued, time-periodic localized weak solutions]{Real-valued, time-periodic localized weak solutions for a semilinear wave equation with periodic potentials}

\date{\today}

\author{Andreas Hirsch}
\address{A. Hirsch \hfill\break 
Institute for Analysis, Karlsruhe Institute of Technology (KIT), \hfill\break
D-76128 Karlsruhe, Germany}
\email{andreas.hirsch@kit.edu}

\author{Wolfgang Reichel}
\address{W. Reichel \hfill\break 
Institute for Analysis, Karlsruhe Institute of Technology (KIT), \hfill\break
D-76128 Karlsruhe, Germany}
\email{wolfgang.reichel@kit.edu}
  
\subjclass[2000]{Primary: 35L71, 49J40; Secondary: 35B10}

\keywords{semilinear wave equation, breather solutions, time-periodic, variational methods}

\begin{abstract}
We consider the semilinear wave equation $V(x) u_{tt} -u_{xx}+q(x)u = \pm f(x,u)$ for three different classes (P1), (P2), (P3) of periodic potentials $V,q$. (P1) consists of periodically extended delta-distributions, (P2) of periodic step potentials and (P3) contains certain periodic potentials $V,q\in H^r_{\per}(\R)$ for $r\in [1,3/2)$. Among other assumptions we suppose that $|f(x,s)|\leq c(1+ |s|^p)$ for some $c>0$ and $p>1$. In each class we can find suitable potentials that give rise to a critical exponent $p^\ast$ such that for $p\in (1,p^\ast)$ both in the ``+'' and the ``-'' case we can use variational methods to prove existence of time-periodic real-valued solutions that are localized in the space direction. The potentials are constructed explicitely in class (P1) and (P2) and are found by a recent result from inverse spectral theory in class (P3). The critical exponent $p^\ast$ depends on the regularity of $V, q$. Our result builds upon a Fourier expansion of the solution and a detailed analysis of the spectrum of the wave operator. In fact, it turns out that by a careful choice of the potentials and the spatial and temporal periods, the spectrum of the wave operator $V(x)\partial_t^2-\partial_x^2+q(x)$ (considered on suitable space of time-periodic functions) is bounded away from $0$. This allows to find weak solutions as critical points of a functional on a suitable Hilbert space and to apply tools for strongly indefinite variational problems.
\end{abstract}

%p erklären

\maketitle

\section{Introduction and results}

We study the $1+1$ dimensional semilinear wave equation
\begin{equation} \label{Einl2scalar} \usetagform{pm}
V(x) u_{tt} -u_{xx} + q(x) u = \pm f(x,u) \text{ in } \R\times\R
\end{equation}
both for the plus and the minus case. Here $V,q \geq 0$ with $q(x)=\tau\omega^2 V(x)$ for $0 \leq |\tau|<\tau_0$ are periodically distributed potentials belonging to one of the three classes (P1), (P2), (P3) given below. Moreover $f:\R\times\R\to\R$ is a Carath\'{e}odory function growing at infinity with a power at most $p>1$ where $p\in (1,p^\ast)$ belong to a subcritical range of exponents, cf. the detailed assumptions (H1)--(H4) on $f$. A typical example is $f(x,s)=\Gamma(x)|s|^{p-1} s$ with a $2\pi$-periodic continuous function $\Gamma$, $\min_\R\Gamma>0$ and $1<p<p^\ast$. We are looking for real-valued, time-periodic and spatially localized solutions of \eqref{Einl2scalar}$_\pm$ often called breathers. Equation \eqref{Einl2scalar}$_\pm$ is a prototype semilinear wave equation which, e.g., can be viewed as an approximation of a second-order in time Maxwell equation for the polarized electric field in the presence of nonlinearities, cf. \cite{BlaSchneiChiril}. 
% The first existence results of breathers for a nonlinear wave equation apart from the Sine-Gordon equation was given by  
Our result is motivated by the work of Blank, Chirilus-Bruckner, Lescarret, Schneider \cite{BlaSchneiChiril} who considered \eqref{Einl2scalar} with $f(x,s)=s^3$. For a very specific choice of periodic step-functions $V$ and $q$ they proved the existence of breathers with the help of spatial dynamics, bifurcation theory and center manifold theory.

\medskip

The use of variational tools is the main methodical difference of our paper to \cite{BlaSchneiChiril}. One of the advantages of variational methods is that they allow nonlinearities which are more general than a pure power as in \cite{BlaSchneiChiril}. Further differences and advantages to \cite{BlaSchneiChiril} are pointed out in Remark~\ref{rem_nach_Hauptres} below. In the present paper we extend the results of \cite{BlaSchneiChiril} and consider the following three classes of more general potentials:
\begin{itemize}
\item[(P1)] $V(x)= \alpha +\beta \delta_{\per}(x)$, where $\alpha,\beta>0$ and $\delta_{\per}$ is the $2\pi$-periodic extension of the delta distribution supported w.l.o.g. on the set $\{2n\pi: n\in \Z\}$.
\item[(P2)] $V(x) = \alpha \chi^{\per}_{[0,2\pi\theta]}+ \beta(1-\chi^{\per}_{[0,2\pi\theta]})$, where $\alpha, \beta>0$, $\theta\in (0,1)$ and $\chi^{\per}_{[0,2\pi\theta]}$ is the $2\pi$-periodic extension of the characteristic function on $[0,2\pi\theta]$.
\item[(P3)] $V\in H^r_{\per}(\R)$ for $r\in [1,3/2)$. 
\end{itemize}
Here $H^r_{\per}(\R)$ consists of $2\pi$-periodic functions (or distributions) $V$ with Fourier-coefficients $\hat V(n) := \frac{1}{2\pi}\int_0^{2\pi} V(x) e^{-inx}\,dx$ satisfying  $\|V\|_{H^r}  := \left(\sum_{n\in\Z} |\hat V(n)|^2 (1+n^2)^r\right)^{1/2}<\infty$. Notice that if $V$ belongs to (P1) then $V\in H^r_{\per}(\R)$ for all $r<-\frac{1}{2}$ and if $V$ belongs to (P2) then $V\in H^r_{\per}(\R)$ for all $r<\frac{1}{2}$. 

\medskip

As we shall see in the main result of Theorem~\ref{Hauptresultat} each of the three classes (P1), (P2), (P3) gives rise to a critical exponent $p^\ast>1$ that limits the maximal growth of the nonlinearity $f$ in the right-hand side of \eqref{Einl2scalar}$_\pm$. Our conditions on $f$ are  the following:
\begin{itemize}
\item[(H1)] $f\colon \R\times\R\to\R$ is a continuous function which is $2\pi$-periodic in the first variable with $\vert f(x,s)\vert \leq c(1+\vert s\vert^p)$ for some $c>0$ and $p>1$,
\item[(H2)] $f(x,s)=o(s)$ as $s\to 0$ uniformly in $x\in\R$,
\item[(H3)] $f(x,s)$ is odd in $s\in\R$ and $s \mapsto f(x,s)/|s|$ is strictly increasing on $(-\infty,0)$ and $(0,\infty)$,
\item[(H4)] $\frac{F(x,s)}{s^2}\to\infty$ as $s\to\infty$ uniformly in $x\in \R$,
\end{itemize}
where $F(x,s)\coloneqq \int_0^s f(x,t) dt$.

\medskip

Due to the required $T$-periodicity in time of the solution we consider a polychromatic, real-valued solution ansatz
\begin{align}\label{AnsPolychrom} 
u(x,t)=\sum_{k\in 2\Z+1} u_k(x) e^{ik\omega t},\quad \bar u_k(x) = u_{-k}(x), \quad \omega= \frac{2\pi}{T}.
\end{align}
Inserting this ansatz into the wave operator $L_{x,t}=V(x)\partial_t^2-\partial_x^2+q(x)$ and recalling that $q(x)=\tau\omega^2 V(x)$ with $|\tau|<\tau_0$ one naturally finds the self-adjoint elliptic operator $L_k: D(L_k)\subset L^2(\R) \to L^2(\R)$ defined by 
\begin{equation}
L_k := -\frac{d^2}{dx^2} -\omega^2(k^2-\tau)V(x).
\label{definition_Lk}
\end{equation}
The way the potentials $V, q$ and the frequency $\omega$ are constructed leads to $L_k$ having a spectral gap $(-c|k|^\gamma, c|k|^\gamma)$ around 0 which grows with order $\gamma$ in $|k|$, cf. Lemma~\ref{nullinaufgehenderluecke2}, Lemma~\ref{nullinaufgehenderluecke_step} and Lemma~\ref{ex_of_potential} in Section~\ref{Deltapointinteract}. This spectral gap growing in $|k|$ is the key to finding breathers as critical points of a strongly indefinite functional by variational methods.

\medskip

The function $u$ generated by the Fourier decomposition \eqref{AnsPolychrom} is $T$-periodic in time and real-valued due to the assumption $u_k(x) = \bar u_{-k}(x)$. Since we only consider coefficients with odd indices $k\in 2\Z+1$ the function $u$ is in fact $T/2$-antiperiodic. The space of antiperiodic-in-time functions is important since it prevents the $k=0$-mode and thus keeps $0$ out of the spectrum of the wave operator $L_{x,t}=V(x)\partial_t^2-\partial_x^2+ q(x)$. At the same time by (H3) the nonlinearity $f(x,u)$ is odd in the second variable and hence it is consistent with seeking $T/2$-antiperiodic solutions. 

\medskip

The space-time domain on which the solutions are determined is denoted by $D\coloneqq \R\times (0,T)$. The potentials $V$ belonging to (P2) and (P3) are bounded, and hence the concept of weak solutions for \eqref{Einl2scalar} given next would only require $u\in L^2(D)\cap L^{p+1}(D)$. However, if $V$ belongs to (P1) constructed from a $2\pi$-periodic extension of the $\delta$-distribution, then we need a suitable adaptation of the concept of a weak solution. Let $\T$ stand for the one dimensional flat $2\pi$-periodic torus. For $r,s \in \R$ we denote by $H^r(0,T; H^1(\R))$, $H^s(0,T; L^2(\R))$ the Bochner spaces of functions of the type \eqref{AnsPolychrom} with the respective norms 
$$
\|u\|_{H^r(0,T;H^1)}^2 = \sum_{k\in 2\Z+1} |k|^{2r} \|u_k'\|_{L^2}^2 \quad \mbox{ and } \quad \|u\|_{H^s(0,T;L^2)}^2 = \sum_{k\in 2\Z+1} |k|^{2s} \|u_k\|_{L^2}^2.
$$

\begin{Definition} \label{Defveryweaksol1} Let $V$ belong to one of the classes (P1), (P2), (P3) and let $q(x) = \tau\omega^2 V(x)$ for some $\tau\in \R$.  We call $u$ of the form \eqref{AnsPolychrom} with $u\in L^{p+1}(D)$ and $u\in H^r(0,T; H^1(\R))\cap H^s(0,T; L^2(\R))$ for some $r\in \R, s>0$ a weak $T$-periodic solution of \eqref{Einl2scalar}$_\pm$ if 
\begin{equation}  \label{Defweaksolnext}
 \int_D V(x) u \phi_{tt} - u \phi_{xx} + q(x) u \phi\,d(x,t) = \pm\int_D f(x,u) \phi\, d(x,t)
\end{equation}
holds for every $\phi\in C_c^\infty(\R\times\T)$. In the case of class (P1) the above notation is understood as 
\begin{equation} \label{specification}
\int_D \delta_{\per}(x) u \phi \,d(x,t) = \sum_{n\in\Z} \int_0^T u(2\pi n,t)\phi(2\pi n,t)\,dt
\end{equation}
for all $\phi \in C_c^\infty(\R\times\T)$. 
\end{Definition}

\begin{Remark} Since $u\in  H^s(0,T; L^2(\R))$ for some $s\geq 0$ we have $u\in L^2(D)$. Hence all of the above integrals are well defined in the cases where $V$ belongs to (P2) or (P3) since then $V\in L^\infty(\R)$. In the case where $V$ belong to (P1) we have $u \in H^r(0,T; C(\R))$ and hence $\int_0^T u(2\pi n,t)\phi(2\pi n,t)\,dt$ is well-defined for every test function $\phi\in C_c^\infty(\R\times\T)$. Notice that in this case the sum in the right-hand side of \eqref{specification} is finite.
\end{Remark}

Based on this concept of a weak solution our main result reads as follows.
\begin{Theorem} \label{Hauptresultat}
Let $V$ be a potential which together with $\omega$, $\tau_0$ and $p^\ast$ satisfies one of the following three assumptions (V1), (V2), (V3). Moreover, let  $f$ satisfy (H1)--(H4) with $p\in (1,p^\ast)$ and let $q(x) = \tau\omega^2 V(x)$ with $0 \leq |\tau|<\tau_0$. Then \eqref{Einl2scalar}$_\pm$ possesses a non-trivial $\frac{2\pi}{\omega}$-periodic weak solution in the sense of Definition~\ref{Defveryweaksol1}. The assumptions on $V$ are as follows:
\begin{itemize}
\item[(V1)] $V$ belongs to (P1) with $\alpha>0, \beta>32\alpha$, $\tau_0 = 1- \frac{32\alpha}{\beta}$, $\omega = \frac{1}{4\sqrt{\alpha}}$ and $p^\ast=2$.
\item[(V2)] $V$ belongs to (P2) with $\theta$ or $1-\theta$ belonging to $(0,\frac{1}{2}(1-\sqrt{7/9}))$, $\alpha>0$, $\theta^2\alpha = (1-\theta)^2\beta$, $\tau_0 = 1- \frac{16\theta(1-\theta)}{\theta^2+(1-\theta)^2}$, $\omega = \frac{1}{4\theta \sqrt{\alpha}}= \frac{1}{4(1-\theta)\sqrt{\beta}}$ and $p^\ast=3$.
\item[(V3)] Let $r\in [1,\frac{3}{2})$ and $0<\gamma<\frac{3}{2}-r$. $V$ belongs to (P3) and is chosen together with $\tau_0$ according to Lemma~\ref{ex_of_potential} below with $\omega = \pi/\int_0^{2\pi} \sqrt{V(x)}\,dx$ and $p^\ast = \frac{2+\gamma}{2-\gamma}$.
\end{itemize}
\end{Theorem}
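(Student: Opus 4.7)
The plan is to carry out the variational scheme outlined in the introduction. Starting from the Fourier ansatz \eqref{AnsPolychrom}, formally inserting into \eqref{Einl2scalar}$_\pm$ decouples the wave operator into the sequence of self-adjoint elliptic operators $L_k$ on $L^2(\R)$ defined in \eqref{definition_Lk}. Under (V1), (V2), or (V3) the construction of $V$ and $\omega$ forces each $L_k$ to have a spectral gap $(-c|k|^\gamma, c|k|^\gamma)$ around zero, with $\gamma$ depending on the class, cf.\ the cited lemmas. This provides a decomposition $L^2(\R) = X_k^+ \oplus X_k^-$ into the spectral subspaces of the positive and negative parts of $L_k$.

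Building on this, I would define the Hilbert space
$$ X := \Bigl\{ u = \sum_{k \in 2\Z+1} u_k(x)\,e^{ik\omega t} : \bar u_k = u_{-k},\ u_k \in D(|L_k|^{1/2}),\ \sum_k \langle |L_k| u_k, u_k \rangle_{L^2} < \infty \Bigr\}, $$
with inner product obtained by summing the $|L_k|^{1/2}$-inner products. The wave operator induces the quadratic form $\mathcal{Q}(u) = \sum_k \langle L_k u_k, u_k\rangle_{L^2} = \|u^+\|_X^2 - \|u^-\|_X^2$, where $u = u^+ + u^-$ is the induced orthogonal splitting. The candidate energy functional is
$$ J_\pm(u) = \tfrac{1}{2} \mathcal{Q}(u) \mp \int_D F(x,u)\,d(x,t), $$
whose critical points are exactly the weak solutions of \eqref{Einl2scalar}$_\pm$ in the sense of Definition~\ref{Defveryweaksol1}, provided $u \in L^{p+1}(D)$.

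The central analytic step, and the main obstacle, is establishing the continuous embedding $X \hookrightarrow L^{p+1}(D)$ for every $p \in (1, p^\ast)$. The spectral gap of rate $\gamma$ yields temporal regularity of order $\gamma/2$, since $\|u_k\|_{L^2} \leq c|k|^{-\gamma/2}\bigl\||L_k|^{1/2} u_k\bigr\|_{L^2}$, while the elliptic character of $L_k$ simultaneously delivers $H^1$-control in $x$. Interpolating these in an anisotropic Sobolev embedding produces precisely $p^\ast = (2+\gamma)/(2-\gamma)$ for case (V3); the values $p^\ast = 2$ and $p^\ast = 3$ in (V1), (V2) are extracted from the explicit spectral-gap rates in Lemmas~\ref{nullinaufgehenderluecke2} and~\ref{nullinaufgehenderluecke_step}. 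In the cases (P2), (P3) with $V \in L^\infty(\R)$ the $L^{p+1}$-integrability is standard; for (P1) with the periodic delta potential, one must also verify that elements of $X$ admit traces on $\{2\pi n\} \times (0,T)$ so that \eqref{specification} is meaningful, and this follows from the $H^r(0,T;C(\R))$-regularity encoded in the norm.

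Once the embedding is available, $J_\pm \in C^1(X,\R)$ and it is strongly indefinite because both $X^+$ and $X^-$ are infinite-dimensional. Conditions (H1)--(H4) are the standard hypotheses for linking/fountain-type critical-point theorems in the strongly indefinite setting (Benci--Rabinowitz, Kryszewski--Szulkin, Bartsch--Ding): (H2) gives the local nondegeneracy near $0$ on $X^+$, (H4) (superquadraticity of $F$) yields coercivity downward on finite-dimensional subspaces of $X^+$, and (H3) (Nehari-type monotonicity) gives boundedness of Cerami sequences and enables a Nehari-manifold reduction. Global compactness fails because the problem is invariant under discrete spatial translations $x \mapsto x + 2\pi n$; I would restore it up to translations by a standard concentration-compactness argument, producing from a Cerami sequence a suitably shifted subsequence whose weak limit in $X$ is nontrivial (ruling out vanishing via a Lions-type lemma on the $2\pi\Z$-translated pieces) and is therefore a nontrivial critical point of $J_\pm$, i.e.\ the desired breather.
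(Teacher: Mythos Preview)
Your outline is essentially the paper's proof: your space $X$ is the paper's $\mc{H}$ (Definition~\ref{def_H}), the functional is the paper's $J$, the embedding $X\hookrightarrow L^{p+1}(D)$ is Theorem~\ref{HauptresultatSection5.5}, and the critical-point argument is carried out via the generalized Nehari manifold of Szulkin--Weth (Theorem~\ref{MinThm}), with the translation-invariance handled by the concentration-compactness Lemma~\ref{ConccompLemma}.

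There is, however, one place where your account is off and would mislead you in the execution. You attribute the difference $p^\ast=2$ in (V1) versus $p^\ast=3$ in (V2) to ``the explicit spectral-gap rates in Lemmas~\ref{nullinaufgehenderluecke2} and~\ref{nullinaufgehenderluecke_step}''. But both lemmas deliver the \emph{same} gap rate $\gamma=1$, so the formula $(2+\gamma)/(2-\gamma)$ gives $3$ in both cases and cannot explain $p^\ast=2$. What actually distinguishes (V1) from (V2) is the $k$-dependent $H^1$-control of Theorem~\ref{ZielAbschnittNonlinearity}: for a bounded potential one gets $b_{|L_k|}(v,v)\gtrsim |k|^{-1}\|v'\|_{L^2}^2$ (so $\delta=-1$), whereas for the periodic delta potential one only gets $b_{|L_k|}(v,v)\gtrsim |k|^{-3}\|v'\|_{L^2}^2$ (so $\delta=-3$), because bounding $\int V_k|v|^2$ now requires the pointwise trace estimate of Lemma~\ref{fundablpktwgegennull}, which costs an extra factor $|k|^2$. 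It is this degradation of spatial regularity in $k$, not the spectral gap, that forces the genuinely anisotropic embedding (parameter $\rho=4$ rather than $\rho=2$ in Lemma~\ref{embedding_by_fourier}) and hence drops $p^\ast$ from $3$ to $2$ in case (V1). Your phrase ``the elliptic character of $L_k$ simultaneously delivers $H^1$-control in $x$'' glosses over precisely this point.
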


\begin{Remark} \label{rem_nach_Hauptres} (i) In contrast to \cite{BlaSchneiChiril} $\tau=0$, i.e., $q\equiv0$ is admissible. Also, in contrast to \cite{BlaSchneiChiril} our breathers are not small since they do not arise as local bifurcations from the trivial solution. \\ 
(ii) Since $f(x,s)$ is odd in $s$ one can expect the existence of infinitely many breathers as critical points of the functional $J$ in Section~\ref{MinimgenNM}. The arguments needed to establish infinitely many critical points for an indefinite functional are generally known, e.g., cf. Theorem~1.2 in \cite{SW_inf}, but would go beyond the scope of the present paper.\\
(iii) Notice that in assumption (V3) $p^\ast$ can reach any value in $(1,\frac{7-2r}{1+2r})$ since $\gamma$ can be chosen arbitrarily close to $\frac{3}{2}-r$.\\
(iv) In the theorem we do not cover the case $p=p^\ast$. The reason is that $p^\ast+1$ has the character of a critical Sobolev-exponent, cf. Remark~\ref{crit_sob}. Since local compactness properties of certain embeddings are lost for endpoint cases, additional difficulties arise in the case $p=p^\ast$ that would substantially extend the length of the present paper. In \cite{BlaSchneiChiril} the case $p=3=p^\ast$ in case (V2) is included. We attempt to address the endpoint case $p=p^\ast$ in a subsequent paper.
\end{Remark}

The regularity of the solution from Theorem~\ref{Hauptresultat} is given in detail next.

\begin{Corollary} \label{CorzuHauptresultat}
Under the assumptions of Theorem~\ref{Hauptresultat} the solution $u$ satisfies $u\in H^\alpha(0,T; H^1(\R))\cap H^\beta(0,T; L^2(\R))$ with 
\begin{equation*}
 2\alpha= \left\{\begin{array}{ll}
 -3 & \mbox{ in case (V1)}, \vspace{\jot} \\
 -1 & \mbox{ in case (V2)}, \vspace{\jot} \\
 \gamma-2 & \mbox{ in case (V3)}, 
\end{array}
\right.
\qquad 
2\beta= \left\{\begin{array}{ll}
 1 & \mbox{ in case (V1)}, \vspace{\jot} \\
 1 & \mbox{ in case (V2)}, \vspace{\jot} \\
 \gamma & \mbox{ in case (V3)}. 
\end{array}
\right.
\end{equation*}
We can therefore weaken the assumptions on the test functions $\phi$ in Definition~\ref{Defveryweaksol1}: we can replace $- \int_D u \phi_{xx} \, d(x,t)$ by $\int_D u_x\phi_x\,d(x,t)$ and admit $\phi\in H^{\tilde \alpha}(0,T;H^1(\R))\cap H^{\tilde\beta}(0,T;L^2(\R))$, where 
\begin{equation*}
 2\tilde \alpha\geq  \left\{\begin{array}{ll}
 3 & \mbox{ in case (V1)}, \vspace{\jot} \\
 1 & \mbox{ in case (V2)}, \vspace{\jot} \\
 2-\gamma & \mbox{ in case (V3)}, 
\end{array}
\right.
\qquad 
2\tilde \beta \geq  \left\{\begin{array}{ll}
 5 & \mbox{ in case (V1)}, \vspace{\jot} \\
 3 & \mbox{ in case (V2)}, \vspace{\jot} \\
 4-\gamma & \mbox{ in case (V3)} 
\end{array}
\right.
\end{equation*}
and additionally $\tilde\alpha+\tilde\beta \geq 5$ in case (V1).
\end{Corollary}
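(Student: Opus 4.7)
The plan rests on the temporal Fourier decomposition \eqref{AnsPolychrom} together with the spectral gap estimates for the operators $L_k$ established in Section~2. First, I would test the weak equation \eqref{Defweaksolnext} with $\phi(x,t)=\psi(x)e^{-ik\omega t}$ for $\psi\in C_c^\infty(\R)$ and each odd integer $k$. Orthogonality in time together with $q=\tau\omega^2 V$ reduces \eqref{Defweaksolnext} to the distributional modewise equation
$$ L_k u_k = \pm \hat f_k \quad \text{on }\R, \qquad \hat f_k(x) := \frac{1}{T}\int_0^T f(x,u(x,t))\,e^{-ik\omega t}\,dt. $$

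Next I would derive the regularity. The spectral gap $\sigma(L_k)\cap(-c|k|^\gamma,c|k|^\gamma)=\emptyset$ from Lemmas~\ref{nullinaufgehenderluecke2}, \ref{nullinaufgehenderluecke_step} and \ref{ex_of_potential} (with the appropriate $\gamma$ in each case) gives $\||L_k|^{-1}\|_{L^2\to L^2}\leq c|k|^{-\gamma}$, whence $\|u_k\|_{L^2}\lesssim|k|^{-\gamma}\|\hat f_k\|_{L^2}$. For the gradient, writing $-u_k''=\pm\hat f_k+\omega^2(k^2-\tau)Vu_k$ and pairing against $u_k$ yields
$$ \|u_k'\|_{L^2}^2 \lesssim \|\hat f_k\|_{L^2}\|u_k\|_{L^2} + k^2\int_\R V|u_k|^2\,dx, $$
where the last integral is controlled via $V\in L^\infty$ in (V2), (V3), and via the Sobolev trace $\sum_n|u_k(2\pi n)|^2\lesssim\|u_k\|_{H^1}\|u_k\|_{L^2}$ in (V1). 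Substituting the $L^2$-bound on $u_k$ produces estimates of the form $\|u_k'\|_{L^2}\lesssim|k|^{1-\gamma}\|\hat f_k\|_{L^2}$ (up to lower-order terms). The weighted sums $\sum_k|k|^{2\beta}\|u_k\|_{L^2}^2$ and $\sum_k|k|^{2\alpha}\|u_k'\|_{L^2}^2$ with the stated exponents then collapse to a multiple of $\sum_k|k|^{-\gamma}\|\hat f_k\|_{L^2}^2\lesssim \|f(x,u)\|_{L^2(D)}^2$, which is finite by (H1), $u\in L^{p+1}(D)$, and the sub-criticality $p<p^\ast$ combined with the Sobolev embedding supplied by the variational Hilbert space of Section~\ref{MinimgenNM}.

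For the weakening of the test functions I would expand $\phi$ in temporal Fourier modes and exploit orthogonality to rewrite, e.g.,
$$ \int_D Vu\,\phi_{tt}\,d(x,t) = -T\omega^2\sum_{k}k^2\int_\R Vu_k\,\overline{\phi_k}\,dx, \qquad \int_D u_x\phi_x\,d(x,t) = T\sum_k\int_\R u_k'\,\overline{\phi_k'}\,dx, $$
and analogously for the $qu\phi$-term. Cauchy--Schwarz in $k$ balancing the $|k|$-weights from the already-proved regularity of $u$ against those on $\phi$ pins down the critical exponents $\tilde\alpha,\tilde\beta$. In (V1) the point-mass coupling $\sum_n u_k(2\pi n)\overline{\phi_k(2\pi n)}$ arising from \eqref{specification} is handled via point-trace Sobolev estimates; the extra factor $k^2$ from $\phi_{tt}$ acting on this coupling is responsible for the supplementary constraint $\tilde\alpha+\tilde\beta\geq 5$.

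The hardest part will be the case-by-case bookkeeping of the exponents, and in particular the delta-potential case (V1), where neither $V\in L^\infty$ nor a direct Plancherel identity applies. There the argument relies on point-trace estimates uniform over the lattice $2\pi\Z$ and on carefully tracking how each of the three terms $Vu\phi_{tt}$, $u_x\phi_x$ and $qu\phi$ contributes a different power of $|k|$; once these are aligned with the spectral-gap decay, the remaining sums close without further difficulty.
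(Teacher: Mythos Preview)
Your bootstrapping route for the regularity has a genuine gap. You need $\|f(x,u)\|_{L^2(D)}<\infty$, which via (H1) requires $u\in L^{2p}(D)$. But the embedding $\mc{S}:\mc{H}\to L^q(D)$ from Theorem~\ref{HauptresultatSection5.5} only holds for $q<q^\ast=p^\ast+1$, and $2p<p^\ast+1$ is strictly stronger than the hypothesis $p<p^\ast$ (it fails for every $p\in\bigl(\tfrac{p^\ast+1}{2},p^\ast\bigr)$, e.g.\ $p\in(3/2,2)$ in (V1) or $p\in(2,3)$ in (V2)). So your closing step ``\dots which is finite by (H1), $u\in L^{p+1}(D)$, and the sub-criticality $p<p^\ast$'' does not go through in the full range.

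The paper avoids this entirely by reversing the logic: the solution $\tilde u$ is \emph{constructed} as a critical point in $\mc{H}$, and the regularity is then an immediate consequence of the embedding $I:\mc{H}\hookrightarrow\hat H$ (Lemma~\ref{embedding}), which is nothing but the statement that $\|\tilde u\|_{\mc{H}}^2=\sum_k b_{|L_k|}(u_k,u_k)$ controls $\sum_k|k|^\gamma\|u_k\|_{L^2}^2+|k|^\delta\|u_k'\|_{L^2}^2$ via Theorems~\ref{Abschnachschlag} and~\ref{ZielAbschnittNonlinearity}. No reference to $\hat f_k$ or to integrability of $f(x,u)$ is needed for the regularity; the modewise equation is used only afterward, to verify the weak formulation. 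Your detour through $L_k u_k=\pm\hat f_k$ and $\|L_k^{-1}\|\lesssim|k|^{-\gamma}$ is therefore both unnecessary and, as written, incomplete.

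For the second part (weakening the test functions) your outline matches the paper's: Fourier-expand $\phi$, split the pairing term by term, and balance $|k|$-weights by Cauchy--Schwarz; in (V1) the $\delta_{\per}$-term is handled via the trace estimate of Lemma~\ref{fundablpktwgegennull}, which is what produces the extra constraint $\tilde\alpha+\tilde\beta\geq5$. One detail the paper makes explicit and you should too: the right-hand side $\int_D f(x,u)\phi$ is bounded by H\"older as $\|u\|_{L^{p+1}}^p\|\phi\|_{L^{p+1}}$ (not via $\|f(x,u)\|_{L^2}$), so only $\phi\in L^{p+1}(D)$ is needed there, which follows from $\tilde\alpha\geq\alpha$, $\tilde\beta\geq\beta$.
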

 
\smallskip

Breather solutions of nonlinear wave equations are quite rare. After the discovery of the sine-Gordon breather family, cf. \cite{ab_kaup_newell_segur:73} 
\begin{align*}
u_{m,\omega}(x,t)=4\arctan \paren{\frac{m}{\omega} \frac{\sin(\omega t)}{\cosh (mx)}}, m,\omega>0, m^2+\omega^2=1
\end{align*} 
for the sine-Gordon equation
\begin{align} \label{SineGordonequation} 
u_{tt}-u_{xx}+\sin u = 0 \text{ in } \R\times\R
\end{align}
a number of results on the non-existence of breathers appeared, e.g. \cite{seguar_kruskal}, \cite{Birnir}, \cite{Denzler}, and most recently in \cite{kowalczyk_et_al}. By these works it became clear that breathers do not persist in homogeneous nonlinear wave equations if the $\sin u$ nonlinearity in \eqref{SineGordonequation} is perturbed to $f(u)$ with $f(0)=0, f'(0)>0$. Thus, the existence of breathers in nonlinear wave equations like $u_{tt}-u_{xx}+f(u)=0$ is a rare phenomenon. The situation is different if one introduces inhomogeneities. For example, nonlinear wave equations on discrete lattices can support breather solutions, cf. \cite{mackay_aubry} for a fundamental result and \cite{james_breathers:09} for an overview with many references. Another way to recover breathers is to introduce inhomogeneities via $x$-dependent coefficients like in \cite{BlaSchneiChiril} for \eqref{Einl2scalar} with $f(x,s)=s^3$. Recently, the authors in \cite{PlumReichel} gave an existence result for breathers in the $3+1$-dimensional semilinear curl-curl wave equation
\begin{align*}
V(x)\partial_t^2 U + \nabla\times\nabla\times U+q(x) U\pm \Gamma(x)\vert U\vert^{p-1} U=0,\ \  p>1,
\end{align*}
for radially symmetric, positive and non-constant functions $V,q,\Gamma\colon \R^3\to (0,\infty)$ satisfying further properties not listed here (note that in \cite{PlumReichel} instead of $V,q,\Gamma$ the potentials are called $s, q, V$). Another interesting polychromatic approach for finding coherent spatially localized solutions of the 1+1-dimensional (quasilinear) Maxwell model is given in \cite{PelSimWeinstein}. Based on a multiple scale ansatz the field profile is expanded into infinitely many modes which are time-periodic both in the fast and slow time variables. Since the periodicities in the fast and slow time-variables differ, the field becomes quasiperiodic in time. The resulting system for these infinitely many coupled modes is to a certain extent treated analytically, with a rigorous existence proof yet missing. The numerical results of \cite{PelSimWeinstein} indicate that spatially localized solitary waves could exist, although nonexistence has not yet been ruled out.

\smallskip

Our main tool for proving existence of breather solutions for \eqref{Einl2scalar} is the use of variational methods. In the context of semilinear wave equations with Dirichlet boundary value problems on intervals of length $\pi$ variational methods have been used before to show existence of time-periodic solutions. E.g. \cite{brezis_coron_nirenberg}, \cite{brezis_coron} used dual variational techniques to prove the existence of $T$-periodic solutions for $u_{tt}-u_{xx}+g(u)=0$ for monotone increasing nonlinearities $g:\R\to\R$ provided $T/\pi\in \Q$. In \cite{hofer} tools for the existence of critical points of strongly indefinite functionals associated to semilinear wave equations on intervals of length $\pi$ are exploited. These variational approaches build on the fact that in the space of $T$-periodic functions the operator  $\partial_t^2-\partial_x^2$ has discrete spectrum due to the Dirichlet boundary conditions. This fails for \eqref{Einl2scalar} because of the unbounded spatial domain $\R$. Yet another aspect of variational methods applied to semilinar wave equations appeared recently in \cite{alejo_et_al_stability}: there the authors study the stability of the sine-Gordon breather using its variational structure together with spectral assumptions on the linearized operator for which strong numerical evidence is given.

\medskip

The paper is structured as follows: In the next section we construct examples of potentials $V,q$ according to (V1), (V2), (V3) which lead to a spectral gap of $L_k$ around $0$ which grows in $|k|$. These and further properties of the operator $L_k$ are described in Section~\ref{Sec:prop_Lk}. The functional analytic framework for breathers is given in Section~\ref{Sec:fa} via a suitable Hilbert-space $\mc{H}$ for the temporal Fourier-coefficients. An important part is the integrability properties of functions composed from these temporal Fourier-coefficients as described in Theorem~\ref{HauptresultatSection5.5}. Because the proof of this theorem is rather long, we have moved it to Section~\ref{sec:proof_for_S}. The use of the integrability properties allows to incorporate nonlinearities into the variational setting. In Section~\ref{MinimgenNM} we find minimizers of a suitable functional on the so-called generalized Nehari manifold, and show that they give rise to weak solutions of \eqref{Einl2scalar}$_\pm$ with regularity properties as given in Corollary~\ref{CorzuHauptresultat}. In order to keep the main sections non-technical, some technical aspects (e.g. a concentration-compactness Lemma) are shifted to the appendix. Throughout this paper we write $\Z_{\odd}\coloneqq 2\Z+1$.

\section{Spectral analysis for examples of one-dimensional operator families} \label{Deltapointinteract}

We consider the one-dimensional family of elliptic operators $L_k: D(L_k)\subset L^2(\R) \to L^2(\R)$ given by 
$$
L_k := -\frac{d^2}{dx^2} - k^2\omega^2 V(x) + q(x), \quad k \in \Z_{\odd}.
$$
We construct examples of $2\pi$-periodic potentials $V,q$ so that $L_k$ has a spectral gap around $0$ of the size $\const |k|^\gamma$ for certain values of $\gamma$ depending on the the cases (V1), (V2), (V3). Consider the closed and semibounded bilinear form
$$
b_{L_k}(u_k,v_k) = \int_\R u_k' \bar v_k'\,dx +\int_\R \Bigl(-k^2\omega^2 V(x)+q(x)\Bigr) u_k \bar v_k\,dx, \quad u_k, v_k \in D(b_{L_k})=H^1(\R),
$$
where in case (V1) we interpret $\int_\R \delta_{\per}(x) u_k \bar v_k = \sum_{n\in\Z} u_k(2\pi n)\bar v_k(2\pi n)$. By Theorem~VIII.15 in \cite{ReedSimon1} we may view $L_k$ as a self-adjoint operator on a suitable domain $D(L_k)$ given by the relation $\langle L_k \phi,\psi\rangle_{L^2(\R)} = b_{L_k}(\phi,\psi)$ for all $\phi\in D(L_k)$ and all $\psi\in H^1(\R)$. The spectrum and the resolvent set of $L_k$ will be denoted by $\sigma(L_k), \rho(L_k)$, respectively. Due to the periodicity of the potentials $V,q$ the spectrum of $L_k$ has band-gap structure which will be analyzed in detail in the following three sections. 

\medskip

The following lemma turns out to be useful for the subsequent computations.

\begin{Lemma} \label{help} Let $\tau\in (-1,1)$. Then there exists $c>0$ such that 
$$
\left|\sin\left(2\pi\sqrt{\lambda+\frac{1}{16}(k^2-\tau)}\right)\right| \geq \min\left\{\frac{1}{2}\sqrt{1+\tau}, \frac{1}{2}\sqrt{1-\tau}\right\} \mbox{ for all } k\in \N_{\odd}, \lambda\in(-ck,ck).
$$
\end{Lemma}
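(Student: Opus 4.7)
Introduce $y := \sqrt{16\lambda + k^2 - \tau}$, so that $2\pi\sqrt{\lambda + (k^2-\tau)/16} = \pi y/2$ and the task reduces to a lower bound on $|\sin(\pi y/2)|$. The elementary inequality $|\sin(\pi y/2)| \geq \dist(y, 2\Z)$ (valid whenever $\dist(y, 2\Z) \leq 1$, via $\sin(\pi z/2) \geq z$ for $z \in [0,1]$ by concavity) then reduces the claim to $\dist(y, 2\Z) \geq \tfrac{1}{2}\sqrt{1 - |\tau|}$, since $\sqrt{1-|\tau|} = \min\{\sqrt{1+\tau}, \sqrt{1-\tau}\}$. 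The key observation is that for $k$ odd, $y$ is near the odd integer $k$ when $\lambda$ is small, and odd integers lie at the maximal distance $1$ from $2\Z$. The plan is to take $c := (1-|\tau|)/32$ and analyse $k = 1$ separately from the bulk $k \geq 3$.

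For $k \geq 3$ odd and $|\lambda| < ck$, a short algebraic check (using $16ck \leq k/2$) shows $y^2 \in [(k-1)^2, (k+1)^2]$, hence $y \in [k-1, k+1]$. The identity $(y-k)(y+k) = 16\lambda - \tau$ together with $y + k \geq 2k - 1$ then gives
\[
  |y - k| \;\leq\; \frac{|\tau| + 16|\lambda|}{y + k} \;\leq\; \frac{1 + k/2}{2k - 1} \;\leq\; \tfrac{1}{2},
\]
so the nearest even integers to $y$ are $k \pm 1$ and $\dist(y, 2\Z) = 1 - |y - k| \geq \tfrac{1}{2} \geq \tfrac{1}{2}\sqrt{1 - |\tau|}$.

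The case $k = 1$ is the principal obstacle: here $y = \sqrt{1 - \tau + 16\lambda}$ approaches $0$ (an even integer) as $\tau \to 1^-$, which is precisely the reason the right-hand side of the lemma must degenerate like $\sqrt{1-\tau}$, and the coarse bound from the $k\geq 3$ step fails. Using $16c = (1-|\tau|)/2$, one reads off $y^2 \in \bigl[(1-|\tau|)/2, \, (3+|\tau|)/2\bigr]$, and hence $y \in [0, \sqrt{2}] \subset [0, 2]$, so the nearest even integers are $0$ and $2$. The lower end gives $y \geq \sqrt{(1-|\tau|)/2} \geq \tfrac{1}{2}\sqrt{1-|\tau|}$ (since $\sqrt{2} < 2$), while the upper end yields $2 - y \geq 2 - \sqrt{2} > \tfrac{1}{2} \geq \tfrac{1}{2}\sqrt{1-|\tau|}$ (using $|\tau|<1$). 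Combining both cases produces $\dist(y, 2\Z) \geq \tfrac{1}{2}\sqrt{1-|\tau|}$, and the proof is complete. The essential difficulty is the bookkeeping needed to choose $c$ uniformly in $k$ while matching the $\tau$-dependent right-hand side in the small-$k$, boundary-$\tau$ regime.
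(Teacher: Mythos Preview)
Your proof is correct. Both your argument and the paper's rest on the same core observation: writing $y=\sqrt{16\lambda+k^2-\tau}$, the argument of the sine is $\pi y/2$, and for $k$ odd and $\lambda$ small, $y$ stays bounded away from the even integers, so $|\sin(\pi y/2)|$ is bounded below. The difference is purely in the execution. The paper treats all $k\in\N_{\odd}$ uniformly: it fixes $\delta=1-\tfrac12\sqrt{1-\tau}$ and $\epsilon=1-\tfrac12\sqrt{1+\tau}$, then chooses $c$ (non-constructively, through a chain of inequalities) small enough that $\tfrac{\pi}{2}(k-\delta)<\tfrac{\pi}{2}y<\tfrac{\pi}{2}(k+\epsilon)$ for every odd $k$, and finishes with $\cos(\pi x/2)\geq 1-x$. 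You instead commit to the explicit value $c=(1-|\tau|)/32$, which forces the case split $k=1$ versus $k\geq 3$: the bulk case is handled by the clean identity $(y-k)(y+k)=16\lambda-\tau$, while $k=1$ needs a separate (and sharper) estimate because $y$ can genuinely approach $0$ as $\tau\to 1^-$. Your route has the advantage of producing an explicit constant; the paper's avoids the case distinction at the cost of leaving $c$ implicit.
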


\begin{proof} Consider $\lambda\in(-ck,ck)$. Let $\delta = 1- \frac{1}{2}\sqrt{1-\tau}$. Due to $\tau\in (-1,1)$ we have $\delta\in (0,1)$ and moreover $\tau+\delta^2-2\delta=-\frac{3}{4}(1-\tau)<0$. Then we choose $c>0$ so small that $-16c-\tau > \delta^2-2\delta$ and $-16c>-2\delta$. Consequently, for all $k\in \N$ we obtain $16\lambda +k^2-\tau > -16kc + k^2-\tau = k(-16c+2\delta)-2k\delta+k^2-\tau\geq -16c+2\delta-2k\delta+k^2-\tau>(k-\delta)^2$ and hence
\begin{equation} \label{lower}
2\pi\sqrt{\lambda+\frac{1}{16}(k^2-\tau)}> \frac{\pi}{2}(k-\delta).
\end{equation}
Similarly, let $\epsilon= 1-\frac{1}{2}\sqrt{1+\tau}$. Due to $\tau\in (-1,1)$ we have $\epsilon\in (0,1)$ and $\tau+\epsilon^2+2\epsilon>\tau+2\epsilon>0$. Then (by possibly decreasing $c>0$) we may assume $16c-\tau < \epsilon^2+2\epsilon$ and $16c < 2\epsilon$. Thus, for all $k\in \N$ we obtain $16\lambda+k^2-\tau < 16kc+k^2-\tau =k(16c-2\epsilon)+2k\epsilon+k^2-\tau < 16c-2\epsilon +2k\epsilon +k^2-\tau< (k+\epsilon)^2$ and hence 
\begin{equation} \label{upper}
2\pi\sqrt{\lambda+\frac{1}{16}(k^2-\tau)}< \frac{\pi}{2}(k+\epsilon).
\end{equation}
Combining \eqref{lower}, \eqref{upper}, $\epsilon, \delta\in (0,1)$ and $k\in \N_{\odd}$ we get that 
$$
\left|\sin\left(2\pi\sqrt{\lambda+\frac{1}{16}(k^2-\tau)}\right)\right| \geq \min \left\{ \cos\Bigl(\frac{\pi\epsilon}{2}\Bigr), \cos\Bigl(\frac{\pi\delta}{2}\Bigr)\right\} \geq \min\{1-\epsilon, 1-\delta\}
$$
which yields the statement of the lemma.
\end{proof}

\subsection{Periodic delta potential} \label{PerdeltaBeispiel}
We consider first the one-dimensional differential expression
\begin{align} \label{1dperdelta}
Lu \coloneqq -u''+ (\tilde{\alpha}+\tilde{\beta} \delta_{\per}(x))u \text{ on } \R,
\end{align}
where $\tilde{\alpha}\in \R$ and $\tilde{\beta} \in \R\setminus \{0\}$. We always assume that $\delta_{\per}$ is supported on $I_\delta:=\{2n\pi: n\in\Z\}$, is $2\pi$-periodic and acts as a delta-distribution at each of the points $2n\pi$ for $n\in \Z$. By Theorem 1 in \cite{ChristStolz} the operator $L$ in \eqref{1dperdelta} is self-adjoint on the domain
\begin{align} \label{DefLStolzChrist}
\begin{split}
D(L)&\coloneqq \Big\{ u\in L^2(\R): u \text{ abs. cont. on } \R, u' \text{ abs. cont. on } \R\setminus I_\delta, \Big. \\
&\Big. u'(x_+)-u'(x_-)=\tilde{\beta} u(x) \text{ for all } x\in I_\delta \text{ and } -u''+ \tilde{\alpha} u \in L^2(\R)\Big\}.
\end{split}
\end{align}
In \eqref{DefLStolzChrist} the function $u$ is continuous on $\R$ and $u', u''$ exist pointwise almost everywhere and are $L^2$-integrable. We rewrite the domain of definition in \eqref{DefLStolzChrist} by making use of weak derivatives. In the following $u$ is a continuous $L^2$-function with an $L^2$-integrable weak derivative $u'$, whereas $u''$ is not a function anymore but a distribution. Thus,
\begin{align*}
D(L)=\{&u\in L^2(\R): Lu\in L^2(\R)\} = \Bigl\{u\in H^1(\R), u|_{(2\pi n, 2\pi (n+1))}\in H^2(2\pi n,2\pi (n+1)) \Bigr.\\ 
&\Bigl.\text{ for all } n\in\Z, \sum_{n\in\Z} \Vert u''\Vert_{L^2(2\pi n,2\pi(n+1))}^2<\infty, u'(x_+)-u'(x_-)=\tilde{\beta} u(x) \text{ for all } x\in I_\delta\Bigr\}.
\end{align*}

\DEL{
{\color{red} \texttt{das k\"onnen wir wohl l\"oschen}
We now introduce the concept of a weak solution of $Lu=f$. 

\begin{Definition}
For $f\in L^2(\R)$ we say that $u\in H^1(\R)$ is a weak solution of $Lu=f$  with $L$ as in \eqref{1dperdelta} if
\begin{align*}
\int_{\R} \paren{u'(x)\varphi'(x)+\tilde{\alpha}u(x)\varphi(x)} dx+\tilde{\beta}\sum_{n\in\Z} u(2\pi n)\varphi(2\pi n) = \int_{\R} f(x)\varphi(x) dx
\end{align*}
holds true for all $\varphi\in C_c^\infty(\R)$. Furthermore, for $u,v\in H^1(\R)$ we define the bilinear form $b$ associated to $L$ by
\begin{align} \label{BilzudeltaDef}
b(u,v) := \int_{-\infty}^\infty \paren{u'(x)\overline{v'(x)} + \tilde{\alpha} u(x)\overline{v(x)}} dx + \tilde{\beta} \sum_{n\in \Z} u(2\pi n)\overline{v(2\pi n)}. 	
\end{align}
\end{Definition}
The bilinear form $b$ and operator $L$ are related via $b(u,v)=\ang{Lu,v}_{L^2(\R)} \text{ for all } u\in D(L),v\in H^1(\R)$, see Theorem~VIII.15 in \cite{ReedSimon1}. 
}}

In \cite{BuschmannStolzWeidmann} it is shown that the classical Sturm-Liouville theory can be generalized to include delta-point interactions, see also the appendix of \cite{ChristStolz}. One can describe the spectrum of $L$ by using the so-called discriminant $D$ (compare Chapter~1 and §~2.1 in \cite{EasthamPer}). Here the discriminant is defined as follows: for $\lambda\in \R$ let $v_1, v_2:\R\to\R$ be solutions of $L v_i = \lambda v_i$ with initial conditions $v_1(x_0)=1, v_1'(x_0)=0$ and $v_2(x_0)=0, v_2'(x_0)=1$ for some $x_0\not\in I_\delta$. Then $v_1, v_2$ is a fundamental system of solutions for the equation $Lu=\lambda u$ and the discriminant is defined as 
$$
D(\lambda) := v_1(x_0+2\pi)+v_2'(x_0+2\pi).
$$
Following Chapter~1 and §~2.1 in \cite{EasthamPer} the spectrum $\sigma(L)$ is characterized with the help of $D(\lambda)$.

\begin{Theorem} \label{Charspec}
$\sigma(L)=\{\lambda\in\R: \abs{D(\lambda)}\leq 2\}$.
\end{Theorem}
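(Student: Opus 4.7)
The plan is to run the standard Floquet--Bloch analysis for periodic one-dimensional Schrödinger operators in the version adapted to delta-point interactions. The essential inputs are the existence of a fundamental system of solutions for $Lu=\lambda u$ and the fact that the Wronskian is preserved across each delta point.

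First I would verify that the initial value problem for $Lu=\lambda u$, understood as $-u''+\tilde\alpha u = \lambda u$ on the intervals between the delta points together with the jump condition $u'(x_+)-u'(x_-)=\tilde\beta u(x)$ at $x\in I_\delta$, admits for each data $(u(x_0),u'(x_0))\in\R^2$ a unique locally absolutely continuous solution, giving me $v_1,v_2$ as defined. A direct calculation using the jump condition shows that the Wronskian $W=v_1v_2'-v_1'v_2$ is continuous across every delta point and hence constantly equal to $1$ on $\R$. Consequently the monodromy matrix $M(\lambda)=\begin{pmatrix} v_1(x_0+2\pi) & v_2(x_0+2\pi)\\ v_1'(x_0+2\pi) & v_2'(x_0+2\pi)\end{pmatrix}$ has determinant one, so its eigenvalues $\rho_\pm(\lambda)$ satisfy $\rho_+\rho_-=1$ and $\rho_++\rho_-=D(\lambda)$. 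Thus $|D(\lambda)|\leq 2$ iff $\rho_\pm$ lie on the unit circle, iff there exist nontrivial quasi-periodic (Bloch) solutions $\psi_\pm$ of $Lu=\lambda u$ with $\psi_\pm(x+2\pi)=e^{\pm i\theta}\psi_\pm(x)$ for some $\theta\in\R$, while $|D(\lambda)|>2$ yields a pair of real Floquet exponents producing one exponentially decaying and one exponentially growing solution at $\pm\infty$.

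Given this, I would prove the two inclusions separately. For $|D(\lambda)|\leq 2$, the bounded Bloch solution $\psi$ is locally in the form domain and, cut off by a sequence of smooth plateau functions $\chi_n$ supported on intervals growing with $n$, produces a Weyl sequence $\phi_n:=\chi_n\psi/\|\chi_n\psi\|_{L^2}$ for $L-\lambda$: the only contributions to $(L-\lambda)\phi_n$ come from the commutator $[L,\chi_n]\psi = -\chi_n''\psi - 2\chi_n'\psi'$, which lives on an annular region whose $L^2$-mass is bounded independently of $n$, whereas $\|\chi_n\psi\|_{L^2}\to\infty$ since $\psi$ is almost periodic and in particular not in $L^2$ near infinity. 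Care must be taken that $\phi_n$ lies in $D(L)$; this is automatic because $\psi$ itself satisfies the correct jump condition at every $2\pi n$ and the cutoff $\chi_n$ is smooth. Hence $\lambda\in\sigma(L)$.

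For $|D(\lambda)|>2$, I would construct the resolvent explicitly by variation of parameters. Choose the two Floquet solutions $\psi_\pm$ with $|\rho_+|<1<|\rho_-|$, so that $\psi_+\in L^2$ at $+\infty$ and $\psi_-\in L^2$ at $-\infty$, normalized so that their Wronskian is $1$. For $f\in L^2(\R)$ set
\begin{equation*}
u(x)=\psi_+(x)\int_{-\infty}^x \psi_-(y)f(y)\,dy+\psi_-(x)\int_x^\infty \psi_+(y)f(y)\,dy.
\end{equation*}
The exponential decay of $\psi_\pm$ at the corresponding infinities together with Young's inequality for convolution in the Floquet representation show that the integral operator is bounded on $L^2(\R)$; moreover $u$ automatically satisfies the jump conditions at the delta points because both $\psi_\pm$ do, and $(L-\lambda)u=f$ in the weak sense of Definition~\ref{Defveryweaksol1}. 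Therefore $\lambda\in\rho(L)$, completing the proof.

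The main obstacle will be the bookkeeping in the two constructions to make sure everything remains inside $D(L)$ as specified after \eqref{DefLStolzChrist}: the Weyl sequence must respect the delta-point jump conditions, and the variation-of-parameters formula must be shown to give a function whose (distributional) second derivative plus $\tilde\alpha u$ is in $L^2$. Both are direct but require invoking the analogue of classical Sturm--Liouville/Floquet theory for delta interactions from \cite{BuschmannStolzWeidmann} and \cite{ChristStolz}, rather than just citing \cite{EasthamPer}.
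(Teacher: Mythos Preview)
The paper does not actually prove this theorem: it is stated without proof and attributed to the classical Floquet theory in \cite{EasthamPer}, with the remark (just before the statement) that \cite{BuschmannStolzWeidmann} extends the classical Sturm--Liouville framework to delta-point interactions. Your proposal is a correct and reasonably complete outline of precisely the argument those references contain --- unimodularity of the monodromy matrix via Wronskian conservation across the delta points, Weyl sequences from bounded Bloch solutions when $|D(\lambda)|\le 2$, and an explicit Green's function from the exponentially dichotomic Floquet pair when $|D(\lambda)|>2$. So there is no discrepancy in approach; you are simply filling in what the paper leaves to the literature. One small point: in the borderline case $|D(\lambda)|=2$ with a nontrivial Jordan block the second Floquet solution grows linearly rather than being bounded, so the Weyl-sequence construction should use only the genuinely bounded (periodic or antiperiodic) eigensolution, which still exists; your sketch implicitly handles this but it is worth making explicit.
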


Next we present the exact form of $D$ associated to \eqref{1dperdelta}. The proof is a straightforward computation so we omit it.
%, see for instance formula $(8)$ in \cite{KurLar} or the results of Lotoreichik and Simonov \cite{LotSim} who obtained related expressions for similar cases.
\begin{Lemma} \label{1ddeltaallg}
The discriminant $D(\cdot)$ associated to \eqref{1dperdelta} reads
\begin{align} \label{deltaformelallg}
D(\lambda) = \begin{cases} \displaystyle\frac{\tilde{\beta}\sin(2\pi \sqrt{\lambda- \tilde{\alpha}})}{\sqrt{\lambda-\tilde{\alpha}}}  +2\cos(2\pi \sqrt{\lambda -\tilde{\alpha}}) &\text{ for } \lambda-\tilde{\alpha} > 0, \vspace{\jot}\\ 
\hspace{3cm} 2+2\pi\tilde{\beta} &\text{ for } \lambda-\tilde{\alpha}=0, \vspace{\jot}\\
\displaystyle\frac{\tilde{\beta}\sinh(2\pi \sqrt{-(\lambda- \tilde{\alpha})})}{\sqrt{-(\lambda- \tilde{\alpha})}}  +2\cosh(2\pi \sqrt{-(\lambda -\tilde{\alpha})}) &\text{ for } \lambda-\tilde{\alpha}< 0.  \end{cases}
\end{align}
\end{Lemma}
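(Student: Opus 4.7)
The plan is to build the fundamental system $v_1,v_2$ explicitly on one period containing a single $\delta$-point, propagate it across that point using the jump condition encoded in $D(L)$, and then read off $D(\lambda)=v_1(x_0+2\pi)+v_2'(x_0+2\pi)$. Since one checks from the Floquet construction that the value of $D(\lambda)$ is independent of $x_0\notin I_\delta$, it is convenient to fix $x_0\in(0,2\pi)$, so that the period $(x_0,x_0+2\pi)$ contains the single singular point $2\pi$ and splits into the two subintervals $(x_0,2\pi)$ and $(2\pi,x_0+2\pi)$ on which $L u=\lambda u$ reduces to the constant-coefficient ODE $-u''+\tilde\alpha u=\lambda u$.

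Consider first the generic case $\lambda>\tilde\alpha$ and set $\mu:=\sqrt{\lambda-\tilde\alpha}>0$. On each subinterval every solution is a linear combination of $\cos(\mu(x-c))$ and $\sin(\mu(x-c))$ for an appropriate base point $c$. Imposing the initial data at $x_0$ gives $v_1(x)=\cos(\mu(x-x_0))$ and $v_2(x)=\mu^{-1}\sin(\mu(x-x_0))$ on $(x_0,2\pi)$. At $x=2\pi$ I use continuity of $v_i$ together with the jump relation $v_i'(2\pi^+)-v_i'(2\pi^-)=\tilde\beta\, v_i(2\pi)$ from the domain description of $L$ to read off the Cauchy data just to the right of the delta; these then propagate through $(2\pi,x_0+2\pi)$ by the same trigonometric fundamental system, shifted so that the base point is $2\pi$. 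Evaluating $v_1$ and $v_2'$ at $x_0+2\pi$, the $\tilde\beta$-free terms recombine via the angle-addition formulae into $2\cos(2\pi\mu)$, and the remaining $\tilde\beta$-linear terms collapse, again by an addition formula, to $\tilde\beta\mu^{-1}\sin(2\pi\mu)$, giving exactly the first branch of \eqref{deltaformelallg}. In particular, the dependence on $x_0$ cancels, as it must.

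For $\lambda<\tilde\alpha$ the very same argument applies verbatim with $\cos,\sin$ replaced by $\cosh,\sinh$ and $\mu$ replaced by $\nu:=\sqrt{\tilde\alpha-\lambda}$, using $\cosh^2-\sinh^2=1$ and the hyperbolic addition formulae in place of their trigonometric counterparts. The borderline value $\lambda=\tilde\alpha$ can be obtained either by passing to the limit $\mu\to 0$ in the first branch (which yields $2+2\pi\tilde\beta$) or directly from the fundamental system $1,x$, propagated across $2\pi$ with the same jump condition. Since all three expressions agree at the matching values, the lemma follows.

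The only subtlety to guard against is compatibility with $D(L)$: one must verify that the piecewise-defined $v_1,v_2$ are continuous on all of $\R$, that they satisfy the prescribed jump of the derivative exactly at the points of $I_\delta$, and that, being bounded on a period, they extend to $H^1_{\mathrm{loc}}$ solutions on which the discriminant construction is valid. Once that is in place, the computation is precisely the routine Floquet-discriminant calculation alluded to by the authors.
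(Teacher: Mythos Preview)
Your proposal is correct and is exactly the standard Floquet-discriminant computation the authors have in mind; the paper itself omits the proof as ``a straightforward computation,'' and your explicit propagation of the fundamental system across the single $\delta$-point via the jump condition, followed by the addition-formula simplifications, is precisely that computation carried out in full.
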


If we insert $V(x)=\alpha+\beta\delta_{\per}(x)$ into \eqref{definition_Lk} we get the following representation for the operator $L_k$, $k\in\Z_{\odd}$:
\begin{align} \label{DefinitionOpLk}
L_k = -\frac{d^2}{dx^2} -\alpha\omega^2 (k^2-\tau) - \beta\omega^2 (k^2-\tau)  \delta_{\per}(x).
\end{align}
By Lemma~\ref{1ddeltaallg} the discriminant $D_k$ associated to $L_k$ reads
\begin{align} \label{Dklambdadreifaelle}
D_k(\lambda)= \begin{cases} -\frac{\beta\omega^2(k^2-\tau)\sin\left(2\pi\sqrt{\lambda+\alpha\omega^2(k^2-\tau)}\right)}{\sqrt{\lambda+\alpha\omega^2(k^2-\tau)}} +2\cos(2\pi\sqrt{\lambda+\alpha\omega^2(k^2-\tau)}) &  \text{ for } \lambda>-\alpha\omega^2(k^2-\tau), \\
\hspace{3cm} 2-2\pi\beta\omega^2(k^2-\tau) & \text{ for } \lambda=-\alpha\omega^2(k^2-\tau), \\
-\frac{\beta\omega^2(k^2-\tau)\sinh\left(2\pi\sqrt{-\lambda-\alpha\omega^2(k^2-\tau)}\right)}{\sqrt{-\lambda-\alpha\omega^2(k^2-\tau)}} +2\cosh(2\pi\sqrt{-\lambda-\alpha\omega^2(k^2-\tau)}) &  \text{ for } \lambda<-\alpha\omega^2k^2.
\end{cases}
\end{align}
We compute $\sigma(L_k)$ depending on $k\in\Z_{\odd}$ by making use of Theorem~\ref{Charspec}. Since $k$ appears in $L_k$ only as $k^2$ we restrict to $k\in \N_{\odd}$. We give conditions on $(\omega,\alpha,\beta, \tau)\in \R^4$ s.t. zero lies uniformly in a spectral gap of $L_k$ for all $k\in \N_{\odd}$ in the following sense. 

\begin{Lemma} \label{nullinaufgehenderluecke2}
Let $(\omega,\alpha,\beta,\tau)\in\R^3_+\times \R$ satisfy
\begin{align} \label{choiceofconstants}
\alpha>0, \;\omega=\frac{1}{4\sqrt{\alpha}}, \;\beta>32\alpha \;\mbox{ and }\; 0\leq |\tau|<1- \frac{32\alpha}{\beta}. 
\end{align}
Then there is $c>0$ independent of $k\in \N_{\odd}$ such that $(-c\vert k\vert,c\vert k\vert)\subset \rho (L_k) \text{ for all } k\in \N_{\odd}$.
\end{Lemma}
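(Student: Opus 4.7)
The plan is to apply Theorem~\ref{Charspec} and show $|D_k(\lambda)|>2$ for every $\lambda\in(-c|k|,c|k|)$ uniformly in $k\in\N_{\odd}$, for a suitable $c>0$. With the choice $\omega=1/(4\sqrt{\alpha})$ we have $\alpha\omega^2=1/16$, and taking $c$ small (depending on $\tau$) ensures $\lambda+(k^2-\tau)/16>0$ for every $k\in\N_{\odd}$ and $\lambda\in(-ck,ck)$, placing us in the oscillatory branch of \eqref{Dklambdadreifaelle}. Setting $y:=2\pi\sqrt{\lambda+(k^2-\tau)/16}$, the discriminant becomes
\[
D_k(\lambda)=-\frac{\pi\beta(k^2-\tau)}{8\alpha}\cdot\frac{\sin y}{y}+2\cos y,
\]
and the triangle inequality yields $|D_k(\lambda)|\geq\frac{\pi\beta(k^2-\tau)}{8\alpha}\cdot\frac{|\sin y|}{y}-2$. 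It therefore suffices to produce a uniform lower bound on the first term that strictly exceeds $4$.

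For $k\geq 3$ I would apply Lemma~\ref{help}: it supplies $|\sin y|\geq\frac{1}{2}\sqrt{1-|\tau|}$ together with the companion estimate $y<\pi(k+\epsilon)/2$ (for $\epsilon$ as small as desired when $c$ shrinks) visible from its proof. Combined with $(k^2-\tau)/(k+\epsilon)\geq k-1\geq 2$ and the hypothesis $1-|\tau|>32\alpha/\beta$, the first term is bounded below by $\frac{\beta\sqrt{1-|\tau|}}{2\alpha}>\frac{1}{2}\sqrt{32\beta/\alpha}>16$ since $\beta>32\alpha$, comfortably above $4$.

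The delicate case is $k=1$, where $(k^2-\tau)/(k+\epsilon)$ degenerates to roughly $(1-\tau)$, which may be only marginally larger than $32\alpha/\beta$, and the coarse estimate fails. Here I would exploit the monotonicity of $\sin y/y$ on $(0,\pi]$: since $|\tau|<\tau_0<1$ and $c$ is small, $y\leq(\pi/2)\sqrt{1-\tau+16c}$ stays strictly below $\pi$. For $\tau\geq 0$ one has $y\leq\pi/2+O(c)$, so $|\sin y|/y\geq 2/\pi-O(c)$, and the first term is at least $\frac{\beta(1-\tau)}{4\alpha}(1-O(c))>8(1-O(c))$ using $\beta(1-\tau)>32\alpha$. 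For $\tau<0$, writing $s:=\sqrt{1-\tau}\in(1,\sqrt{2})$ and $g(s):=s\sin(\pi s/2)$, the first term equals $\frac{\beta g(s)}{4\alpha}$ up to $O(c)$; an elementary check gives $g\geq 1$ on $[1,\sqrt{2}]$ with equality only at $s=1$, so the first term exceeds $\frac{\beta}{4\alpha}(1-O(c))>8(1-O(c))$ by $\beta>32\alpha$. In either subcase, $|D_1(\lambda)|>2$ strictly once $c$ is small enough.

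The main technical obstacle is this sharpness at $k=1$: the generic Lemma~\ref{help} bound is a touch too loose there, so one must invoke the sharper monotonicity of $\sin y/y$ and pick $c$ small depending on the strict slack $1-|\tau|-32\alpha/\beta>0$ guaranteed by $|\tau|<\tau_0$, in order to absorb the $O(c)$ correction terms. That $\tau$-dependence of $c$ is admissible since only independence of $k$ is required by the conclusion.
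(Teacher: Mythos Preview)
Your argument is essentially correct, but it takes a more laborious route than the paper's. The paper avoids the case split on $k$ altogether: after invoking Lemma~\ref{help} for the lower bound $|\sin y|\geq\min\{\tfrac{1}{2}\sqrt{1+\tau},\tfrac{1}{2}\sqrt{1-\tau}\}$, it rewrites this as $\tfrac{1}{2\sqrt{1-\tau}}\min\{1+\tau,1-\tau\}$, and separately bounds the right-hand side of the key inequality by $(4\sqrt{2c/(1-\tau)}+1)/(\beta\omega^2\sqrt{1-\tau})$ using the uniform estimate $k/(k^2-\tau)\leq 2/(1-\tau)$ valid for \emph{all} $k\in\N_{\odd}$. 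The factors of $\sqrt{1-\tau}$ then cancel, and the resulting sufficient condition $32\alpha/\beta<\min\{1+\tau,1-\tau\}$ holds uniformly in $k$---including $k=1$. Your approach instead pairs the cruder form $|\sin y|\geq\tfrac{1}{2}\sqrt{1-|\tau|}$ with $y<\pi(k+\epsilon)/2$, which indeed loses a factor $(1-\tau)^{1/2}$ at $k=1$ and forces your separate $\sin y/y$ analysis. That analysis is valid (the claim $g(s)=s\sin(\pi s/2)\geq 1$ on $[1,\sqrt{2}]$ checks out), but the paper's algebraic rearrangement is cleaner.

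Two minor points that do not break your argument: the lower bound for $k\geq 3$ should read $\tfrac{\beta\sqrt{1-|\tau|}}{4\alpha}$ rather than $\tfrac{\beta\sqrt{1-|\tau|}}{2\alpha}$ (still $>8>4$ under the hypotheses); and the assertion that $\epsilon$ can be made ``as small as desired'' is not quite what the proof of Lemma~\ref{help} establishes when $\tau<0$ (there $\epsilon>\sqrt{1-\tau}-1>0$ is required), but your estimate $(k^2-\tau)/(k+\epsilon)\geq k-1$ only needs $\epsilon<1$, which is guaranteed.
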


\begin{proof} By Theorem~\ref{Charspec} we have to find $c>0$ such that $\abs{D_k(\lambda)}>2$ for all $\lambda\in (-ck,ck)$ and all $k\in \N_{\odd}$. We will choose $c>0$ so small that $0<c<\frac{1}{16}(1-\tau)$, since then $\lambda>-ck\geq -\frac{1}{16}(k^2-\tau)$ for all $k\in \N$ and hence we only have to deal with the first case of the case distinction in \eqref{Dklambdadreifaelle}. The result follows if we can guarantee that for all $k\in 2\N-1$:
\begin{align} \label{cossinterme}
\left\vert 2\cos\paren{2\pi\sqrt{\lambda+\frac{1}{16}(k^2-\tau)}}-\frac{\beta \omega^2 (k^2-\tau)}{\sqrt{\lambda+\frac{1}{16}(k^2-\tau)}}\sin\paren{2\pi \sqrt{\lambda+\frac{1}{16}(k^2-\tau)}}\right\vert >2 \text{ for } \abs{\lambda}< ck.
\end{align}
Since $\big\vert 2\cos\paren{2\pi\sqrt{\lambda+\frac{1}{16}(k^2-\tau)}}\big\vert\leq 2$ it is sufficient for \eqref{cossinterme} to prove 
\begin{align} \label{sinterm1}
 \left\vert\sin\paren{2\pi \sqrt{\lambda+\frac{1}{16}(k^2-\tau)}}\right\vert >4 
 \frac{\sqrt{\lambda+\frac{1}{16} (k^2-\tau)}}{\beta\omega^2(k^2-\tau)}
  \text{ for } \abs{\lambda}<ck \text{ and all } k\in \N_{\odd}.
\end{align}
By Lemma~\ref{help} we can choose $c>0$ so small that the lefthand side in \eqref{sinterm1} has the positive lower bound 
\begin{equation}
\min\left\{\frac{1}{2}\sqrt{1+\tau}, \frac{1}{2}\sqrt{1-\tau}\right\} \geq \min\left\{\frac{1}{2}\frac{1+\tau}{\sqrt{1-\tau}}, \frac{1}{2}\sqrt{1-\tau}\right\}= \frac{1}{2\sqrt{1-\tau}}\min\{1+\tau,1-\tau\}
\label{lower_bound}
\end{equation}
for all $k\in \N_{\odd}$ and all $\tau\in (-1,1)$. Let us find an upper bound for the right hand side of \eqref{sinterm1}. Clearly 
\begin{equation}
4\frac{\sqrt{\lambda+\frac{1}{16} (k^2-\tau)}}{\beta\omega^2(k^2-\tau)} \leq 4 \frac{\sqrt{\frac{ck}{k^2-\tau}+\frac{1}{16}}}{\beta \omega^2 \sqrt{k^2-\tau}}
\label{upper_bound}
\end{equation}
and $\frac{k}{k^2-\tau}= \frac{1}{k-\tau/k} \leq \frac{1}{1-\tau}\leq \frac{2}{1-\tau}$ for $\tau\in [0,1)$, $k\in \N_{\odd}$ and $\frac{k}{k^2-\tau}\leq \frac{1}{k}\leq 1 \leq  \frac{2}{1-\tau}$ for $\tau\in (-1,0]$, $k\in \N_{\odd}$. Thus the upper bound from \eqref{upper_bound} becomes 
\begin{equation}
4\frac{\sqrt{\lambda+\frac{1}{16} (k^2-\tau)}}{\beta\omega^2(k^2-\tau)} \leq 4 \frac{\sqrt{\frac{2c}{1-\tau}+\frac{1}{16}}}{\beta \omega^2 \sqrt{1-\tau}} \leq \frac{4\sqrt{\frac{2c}{1-\tau}}+1}{\beta \omega^2 \sqrt{1-\tau}}.
\label{upper_bound2}
\end{equation}
Combining \eqref{lower_bound} and \eqref{upper_bound2} and using $\omega^2 = 1/(16\alpha)$ we see that it is sufficient to have 
\begin{equation}
\left(8\sqrt{\frac{2c}{1-\tau}}+2\right)\frac{16\alpha}{\beta} < \min\{1+\tau,1-\tau\}
\label{both_bounds}
\end{equation}
A sufficiently small value of $c>0$ (depending on $\tau, \alpha, \beta$) satisfying \eqref{both_bounds} can be found provided 
$$
\frac{32\alpha}{\beta}< \min\{1+\tau,1-\tau\}
$$
i.e. $0 \leq |\tau| < 1 -\frac{32\alpha}{\beta}$. 
\end{proof} 

\DEL{
\begin{Remark} {\color{red} \texttt{Neu machen!}
Assumption \eqref{choiceofconstants} is precisely the assumption of Theorem~\ref{Hauptresultat} since \eqref{choiceofconstants} leads to $T=\frac{2\pi}{\omega}=8\pi\sqrt{\alpha}$. Moreover, notice that $\big\vert D_k\paren{-\frac{k^4}{4}-\frac{k^2}{16}}\big\vert = 2e^{-\pi k^2}<2$ and $\big\vert D_k\paren{\frac{k^4}{4}-\frac{k^2}{16}}\big\vert =2$, i.e., $\pm \frac{k^4}{4}-\frac{k^2}{16}\in \sigma (L_k)$. Hence there exist elements of $\sigma(L_k)$ to the left and to the right of $0$, i.e., $0$ lies in a true spectral gap of $L_k$.}
\end{Remark}
}

\subsection{Periodic step potential} \label{PerstepBeispiel}

Here we consider the one-dimensional differential expression
\begin{align} \label{1dperstep}
Lu \coloneqq -u''+\paren{\tilde\alpha \chi^{\per}_{[0,2\pi\theta]}+ \tilde\beta(1-\chi^{\per}_{[0,2\pi\theta]})}u \text{ on } \R,
\end{align}
where $\theta \in (0,1)$ and $\tilde{\alpha}, \tilde{\beta} \in \R$. Here $\chi^{\per}_{[0,2\pi\theta]}$ is the $2\pi$-periodic extension of the characteristic function on $[0,2\pi\theta]$. The operator $L$ is self-adjoint on the domain $D(L)=H^2(\R)$. We write $\theta'=1-\theta$. 

\medskip

As in the previous section, its spectrum is characterized by the discriminant $D$. The only difference is that that the initial condition for the fundamental system of solutions can be set at any point $x_0\in \R$. The computation of the exact form of $D$ associated to \eqref{1dperstep} is straightforward, so we omit it. \begin{Lemma} \label{1dstepallg}
The discriminant $D(\cdot)$ associated to \eqref{1dperstep} in the case $\lambda > \max\{\tilde\alpha,\tilde\beta\}$ reads 
\begin{equation} \label{stepformelallg}
D(\lambda) = - \frac{2\lambda-\tilde\alpha-\tilde\beta}{\sqrt{(\lambda-\tilde\alpha)(\lambda-\tilde\beta)}} \sin\left(\sqrt{\lambda-\tilde\alpha}2\pi\theta\right)\sin\left(\sqrt{\lambda-\tilde\beta}2\pi\theta'\right)+2\cos\left(\sqrt{\lambda-\tilde\alpha}2\pi\theta\right)\cos\left(\sqrt{\lambda-\tilde\beta}2\pi\theta'\right).
\end{equation}
\end{Lemma}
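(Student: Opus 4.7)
The plan is to compute the discriminant by propagating a fundamental system of solutions across one period $[x_0, x_0+2\pi]$, with the natural choice $x_0 = 0$ (more precisely, $x_0 = 0^+$ so that the jump of the potential at $0$ is irrelevant and we stay in the region where $V \equiv \tilde\alpha$ on $(0, 2\pi\theta)$ and $V\equiv\tilde\beta$ on $(2\pi\theta, 2\pi)$). Since $V \in L^\infty(\R)$, any solution of $Lu = \lambda u$ lies in $H^2_{\loc}$, so both $u$ and $u'$ are continuous across the jump at $x = 2\pi\theta$. This lets me build the monodromy by multiplying two elementary transfer matrices.

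On $(0, 2\pi\theta)$ the equation reduces to $-u'' + \tilde\alpha u = \lambda u$; since $\lambda > \tilde\alpha$, I would set $\mu_1 := \sqrt{\lambda - \tilde\alpha}$ and use the basis $\cos(\mu_1 x), \mu_1^{-1}\sin(\mu_1 x)$. Similarly, on $(2\pi\theta, 2\pi)$ I would set $\mu_2 := \sqrt{\lambda - \tilde\beta}$ and use the translated basis $\cos(\mu_2(x - 2\pi\theta)), \mu_2^{-1}\sin(\mu_2(x - 2\pi\theta))$. The transfer matrices taking the vector $(u, u')^\top$ across each sub-interval are then
\begin{equation*}
M_\theta = \begin{pmatrix} \cos(\mu_1 2\pi\theta) & \mu_1^{-1}\sin(\mu_1 2\pi\theta) \\ -\mu_1\sin(\mu_1 2\pi\theta) & \cos(\mu_1 2\pi\theta) \end{pmatrix}, \quad M_{\theta'} = \begin{pmatrix} \cos(\mu_2 2\pi\theta') & \mu_2^{-1}\sin(\mu_2 2\pi\theta') \\ -\mu_2\sin(\mu_2 2\pi\theta') & \cos(\mu_2 2\pi\theta') \end{pmatrix}.
\end{equation*}

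The full monodromy matrix is $M = M_{\theta'} M_\theta$, and by the definition of the discriminant one has $D(\lambda) = v_1(2\pi) + v_2'(2\pi) = \operatorname{tr} M$, since the columns of $M$ applied to the initial data $(1,0)^\top$ and $(0,1)^\top$ produce exactly $(v_1(2\pi), v_1'(2\pi))^\top$ and $(v_2(2\pi), v_2'(2\pi))^\top$. Multiplying out $M$, the two diagonal entries are $\cos(\mu_1 2\pi\theta)\cos(\mu_2 2\pi\theta') - (\mu_1/\mu_2)\sin(\mu_1 2\pi\theta)\sin(\mu_2 2\pi\theta')$ and $\cos(\mu_1 2\pi\theta)\cos(\mu_2 2\pi\theta') - (\mu_2/\mu_1)\sin(\mu_1 2\pi\theta)\sin(\mu_2 2\pi\theta')$, whose sum collapses via $\mu_1/\mu_2 + \mu_2/\mu_1 = (\mu_1^2 + \mu_2^2)/(\mu_1\mu_2) = (2\lambda - \tilde\alpha - \tilde\beta)/\sqrt{(\lambda-\tilde\alpha)(\lambda-\tilde\beta)}$ to the claimed formula \eqref{stepformelallg}.

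There is no real obstacle here: the only points that need care are (i) choosing $x_0$ away from the jump so that the fundamental system is classically defined on each sub-interval, and (ii) observing that continuity of $u$ and $u'$ at $x = 2\pi\theta$ is precisely what justifies composing the two transfer matrices without any interface correction (in contrast to the delta-potential case of the previous subsection where a jump condition on $u'$ enters). The remaining manipulations are the routine trigonometric simplification sketched above.
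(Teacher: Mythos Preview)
Your argument is correct and is exactly the straightforward transfer-matrix computation the paper has in mind; the paper in fact omits the proof entirely, stating only that ``the computation of the exact form of $D$ \ldots\ is straightforward, so we omit it.'' Your choice $x_0=0$, the matching of $(u,u')$ at $2\pi\theta$, and the trace identity $\mu_1/\mu_2+\mu_2/\mu_1=(2\lambda-\tilde\alpha-\tilde\beta)/\sqrt{(\lambda-\tilde\alpha)(\lambda-\tilde\beta)}$ are precisely what is needed.
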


\begin{Remark}
Since the remaining case $\lambda\leq \max\{\tilde\alpha,\tilde\beta\}$ plays no role in the subsequent considerations we omit it.
\end{Remark}

If we insert $V(x)=\alpha \chi^{\per}_{[0,2\pi\theta]}+\beta(1-\chi^{\per}_{[0,2\pi\theta]})$ we get the following representation for the operator $L_k$, $k\in\Z_{\odd}$:
\begin{align} \label{DefinitionOpLkstep}
L_k \coloneqq -\frac{d^2}{dx^2} -\alpha\omega^2 (k^2-\tau)\chi^{\per}_{[0,2\pi\theta]} - \beta\omega^2 (k^2-\tau) (1-\chi^{\per}_{[0,2\pi\theta]}).
\end{align}
The discriminant $D_k$ associated to $L_k$ for $\lambda>(\tau-k^2)\omega^2 \min\{\alpha, \beta\}$ is given as in  Lemma~\ref{1dstepallg} with $\tilde\alpha = -\alpha\omega^2 (k^2-\tau)$ and $\tilde\beta = - \beta\omega^2 (k^2-\tau)$. We compute $\sigma(L_k)$ depending on $k\in\Z_{\odd}$ by making use of Theorem~\ref{Charspec}. Since $k$ appears in $L_k$ only as $k^2$ we restrict to $k\in \N_{\odd}$. We give conditions on $(\omega,\alpha,\beta,\theta,\tau)\in \R^5$ s.t. zero lies uniformly in a spectral gap of $L_k$ for all $k\in \N_{\odd}$ in the following sense. 

\begin{Lemma} \label{nullinaufgehenderluecke_step}
Let $\theta, \theta'\in (0,1)$ satisfy $\theta+\theta'=1$ and either $\theta$ or $\theta'$ belong to $(0,\frac{1}{2}(1-\sqrt{7/9}))$. Let moreover $(\omega,\alpha,\beta,\tau)\in\R_+^3\times\R$ satisfy
\begin{align} \label{choiceofconstants_step}
\alpha>0, \; \omega=\frac{1}{4\theta \sqrt{\alpha}}=\frac{1}{4\theta' \sqrt{\beta}} \;\mbox{ and }\;  0\leq |\tau|<1- \frac{16\theta \theta'}{\theta^2+{\theta'}^2} .
\end{align}
Then there is $c>0$ independent of $k\in \N_{\odd}$ such that $(-c\vert k\vert,c\vert k\vert)\subset \rho (L_k) \text{ for all } k\in \N_{\odd}$.
\end{Lemma}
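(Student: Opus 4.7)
The plan mirrors the proof of Lemma~\ref{nullinaufgehenderluecke2}. By Theorem~\ref{Charspec} the task is to produce $c>0$ such that $|D_k(\lambda)|>2$ for all $k\in\N_{\odd}$ and all $\lambda\in(-ck,ck)$. The identities $\alpha\omega^2=1/(16\theta^2)$ and $\beta\omega^2=1/(16{\theta'}^2)$ from \eqref{choiceofconstants_step} give $\tilde\alpha:=-\alpha\omega^2(k^2-\tau)=-(k^2-\tau)/(16\theta^2)$ and $\tilde\beta:=-(k^2-\tau)/(16{\theta'}^2)$, so that the two arguments in \eqref{stepformelallg} are $x=2\pi\sqrt{\theta^2\lambda+(k^2-\tau)/16}$ and $y=2\pi\sqrt{{\theta'}^2\lambda+(k^2-\tau)/16}$. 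For $c$ small, $\lambda$ remains in the regime where \eqref{stepformelallg} is valid, and one may write
$$D_k(\lambda) = C(\lambda)\sin x\sin y + 2\cos x\cos y,\qquad C(\lambda) = -\frac{2\lambda-\tilde\alpha-\tilde\beta}{\sqrt{(\lambda-\tilde\alpha)(\lambda-\tilde\beta)}}.$$
Since $|2\cos x\cos y|\leq 2$, the triangle inequality reduces the goal to $|C(\lambda)|\,|\sin x|\,|\sin y|>4$.

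For the two sine factors I would invoke Lemma~\ref{help} with $\theta^2\lambda$ and ${\theta'}^2\lambda$ in place of $\lambda$; shrinking $c$ by a factor $\min\{\theta^2,{\theta'}^2\}$ validates its hypothesis and yields
$$|\sin x|,\ |\sin y|\geq \min\bigl\{\tfrac{1}{2}\sqrt{1+\tau},\ \tfrac{1}{2}\sqrt{1-\tau}\bigr\},\qquad \text{hence}\qquad |\sin x|\,|\sin y|\geq \frac{1-|\tau|}{4}.$$
For $|C(\lambda)|$ I would rely on the algebraic identity
$$|C(\lambda)|^2 = \frac{4}{1-\rho(\lambda)^2},\qquad \rho(\lambda) := \frac{(\tilde\alpha-\tilde\beta)/2}{\lambda-(\tilde\alpha+\tilde\beta)/2},$$
which follows at once from $(\lambda-\tilde\alpha)(\lambda-\tilde\beta) = \bigl(\lambda-\tfrac{\tilde\alpha+\tilde\beta}{2}\bigr)^2 - \bigl(\tfrac{\tilde\alpha-\tilde\beta}{2}\bigr)^2$. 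Inserting $\tilde\alpha,\tilde\beta$ gives
$$\rho(\lambda)^2 = \frac{(\theta^2-{\theta'}^2)^2}{\bigl(\theta^2+{\theta'}^2 + 32\theta^2{\theta'}^2\lambda/(k^2-\tau)\bigr)^2}.$$
Since $k/(k^2-\tau)\leq 1/(1-|\tau|)$ uniformly for $k\in\N_{\odd}$ and $|\tau|<1$, the correction in the denominator is bounded by $32c\theta^2{\theta'}^2/(1-|\tau|)$ on $|\lambda|<ck$ and can be driven to zero by further decreasing $c$. Consequently $|C(\lambda)|\geq(\theta^2+{\theta'}^2)(1-\eta)/(\theta{\theta'})$ with $\eta>0$ arbitrarily small, uniformly in $k\in\N_{\odd}$ and $\lambda\in(-ck,ck)$.

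Combining the two bounds yields $|C(\lambda)|\,|\sin x|\,|\sin y|\geq (\theta^2+{\theta'}^2)(1-|\tau|)(1-\eta)/(4\theta{\theta'})$, and the hypothesis $|\tau|<1-16\theta{\theta'}/(\theta^2+{\theta'}^2)$ is algebraically equivalent to $(\theta^2+{\theta'}^2)(1-|\tau|)/(\theta{\theta'})>16$; thus the product strictly exceeds $4$ for $\eta$ (and hence $c$) small enough. The restriction that $\theta$ or ${\theta'}$ lie in $(0,\tfrac{1}{2}(1-\sqrt{7/9}))$ plays no role in the estimates themselves---it is exactly the condition that $1-16\theta{\theta'}/(\theta^2+{\theta'}^2)$ be strictly positive, i.e.\ that the admissible range for $|\tau|$ be non-empty. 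The main obstacle I anticipate is keeping the lower bound on $|C(\lambda)|$ genuinely uniform across \emph{all} $k\in\N_{\odd}$, particularly at $k=1$ where $k^2-\tau$ is smallest; the identity for $|C(\lambda)|^2$ together with the uniform bound on $k/(k^2-\tau)$ dispatches both large and small $k$ in a single argument, without a separate case analysis.
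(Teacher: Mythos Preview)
Your proof is correct and follows essentially the same route as the paper: reduce via Theorem~\ref{Charspec} to $|D_k(\lambda)|>2$, invoke Lemma~\ref{help} for the lower bound $|\sin x|\,|\sin y|\geq(1-|\tau|)/4$, and control the coefficient $|C(\lambda)|$ uniformly in $k$ and $\lambda$. The only difference is cosmetic: where the paper bounds the reciprocal $4/|C(\lambda)|$ from above via the monotonicity of $\lambda\mapsto(\lambda+a)(\lambda+b)/(2\lambda+a+b)$, you instead bound $|C(\lambda)|$ from below via the identity $|C(\lambda)|^2=4/(1-\rho(\lambda)^2)$; both lead to the same threshold $|\tau|<1-16\theta\theta'/(\theta^2+{\theta'}^2)$, and your observation about the role of the constraint on $\theta$ matches the paper's closing remark.
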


\begin{proof} By Theorem~\ref{Charspec} we have to find $c>0$ such that $\abs{D_k(\lambda)}>2$ for all $\lambda\in (-ck,ck)$ and all $k\in 2\N-1$. First choose $c>0$ so small that $0<c<(1-\tau)\omega^2\min\{\alpha,\beta\}$. This implies $\lambda>-ck\geq -(k^2-\tau)\omega^2\min\{\alpha,\beta\}$ for all $k\in \N$ and therefore \eqref{stepformelallg} in Lemma~\ref{1dstepallg} gives the form of $D(\lambda)$. The result follows as in Lemma~\ref{nullinaufgehenderluecke2} if for all $k\in 2\N-1$ we have 
\begin{multline} \label{sinterme}
\sin\left(\sqrt{\lambda+\alpha\omega^2(k^2-\tau)}2\pi\theta\right)\sin\left(\sqrt{\lambda+\beta\omega^2(k^2-\tau)}2\pi\theta'\right)  \\
> 4 \frac{\sqrt{(\lambda+\alpha\omega^2(k^2-\tau))(\lambda+\beta\omega^2(k^2-\tau))}}{2\lambda+(\alpha+\beta)\omega^2(k^2-\tau)}
\text{ for } \abs{\lambda}< ck.
\end{multline}
Using $\alpha\omega^2 \theta^2 =1/16$ and $\beta\omega^2{\theta'}^2=1/16$ we can apply Lemma~\ref{help} and choose $c>0$ so small that the lefthand side in \eqref{sinterme} has the positive lower bound 
\begin{equation}
\min\left\{\frac{1+\tau}{4}, \frac{1-\tau}{4}\right\} 
\label{lower_bound_step}
\end{equation}
for all $k\in \N_{\odd}$ and all $\tau\in (-1,1)$. In order to find an upper bound for the righthand side of \eqref{sinterme} observe first that the map $\lambda \mapsto \frac{(\lambda+a)(\lambda+b)}{2\lambda+a+b}$ is strictly increasing in $\lambda>-\min\{a,b\}$ provided $a,b>0$. Hence using $\lambda<ck$ we obtain 
\begin{equation}
4 \frac{\sqrt{(\lambda+\alpha\omega^2(k^2-\tau))(\lambda+\beta\omega^2(k^2-\tau))}}{2\lambda+(\alpha+\beta)\omega^2(k^2-\tau)} < 4 \frac{\sqrt{\frac{ck}{k^2-\tau}+\frac{1}{16\theta^2}} \sqrt{\frac{ck}{k^2-\tau}+\frac{1}{16{\theta'}^2}}}{\frac{2ck}{k^2-\tau} + \frac{1}{16\theta^2} + \frac{1}{16{\theta'}^2}}.
\label{upper_bound_step}
\end{equation}
As we have seen in Lemma~\ref{nullinaufgehenderluecke2} we may use the inequality $\frac{k}{k^2-\tau} \leq \frac{2}{1-\tau}$ for all $\tau\in (-1,1)$, $k\in \N_{\odd}$, and hence the upper bound from \eqref{upper_bound_step} becomes 
\begin{equation}
4 \frac{\sqrt{(\lambda+\alpha\omega^2(k^2-\tau))(\lambda+\beta\omega^2(k^2-\tau))}}{2\lambda+(\alpha+\beta)\omega^2(k^2-\tau)} < 4 \frac{\sqrt{\frac{2c}{1-\tau}+\frac{1}{16\theta^2}} \sqrt{\frac{2c}{1-\tau}+\frac{1}{16{\theta'}^2}}}{\frac{1}{16\theta^2} + \frac{1}{16{\theta'}^2}}.
\label{upper_bound_step2}
\end{equation}
Combining \eqref{lower_bound_step} and \eqref{upper_bound_step2} we see that a sufficiently small value of $c$ (depending on $\tau, \alpha, \beta, \theta$) can be found provided 
\begin{equation}
\frac{16\theta(1-\theta)}{\theta^2+(1-\theta)^2} < \min\{1+\tau,1-\tau\}, \quad \mbox{ i.e., } \quad 0\leq |\tau| < 1- \frac{16\theta(1-\theta)}{\theta^2+(1-\theta)^2}.
\label{both_bounds_step}
\end{equation}
This requires $\theta$ or $\theta'$ to belong to $(0,\frac{1}{2}(1-\sqrt{7/9}))$. 
\end{proof} 

\subsection{Periodic potential in $H^r_{\per}(\R)$}  \label{PerPotBeispiel}

In our third example we consider the operators $L_k$, $k\in \Z_{\odd}$ given by the one-dimensional differential expression
\begin{align} \label{1dper_hr}
L_ku \coloneqq -u'' -\omega^2 (k^2-\tau) V(x) u \text{ on } \R
\end{align}
where $V\in H^r_{\per}(\R)$. Using the Fourier-coefficients $\hat V(n) := \frac{1}{2\pi}\int_0^{2\pi} V(x) e^{-inx}\,dx$ the space $H^r_{\per}(\R)$ is defined  as 
$$
H^r_{\per}(\R) := \{V\in L^2_{\loc}(\R): ( \hat V(n)(1+n^2)^{r/2})_{n\in\Z} \in l^2(\Z)\}
$$
with the norm $\|V\|_{H^r}  := \left(\sum_{n\in\Z} |\hat V(n)|^2 (1+n^2)^r\right)^{1/2}$. For $r \geq 1$ the operator $L_k$ is self-adjoint on the domain $D(L_k)=H^2(\R)$. 

The proof of the following lemma relies upon a recent result from \cite{Bruckner_Wayne}. There the authors consider the differential operator 
$L_V := -\frac{1}{V(x)}\frac{d^2}{dx^2}$ for $V\in H^r_{\per}(\R)$ with $V(x)\geq V_0>0$ for some $V_0\in \R$. The operator acting on the weighted Hilbert-space $L^2(\R, V\,dx)$ is self-adjoint with domain $H^2(\R)$. For $k\in \N$ let $\mu_k$ denote the $k$-th Dirichlet eigenvalue of $L_V$ and $\nu_k$ its $k$-th Neumann eigenvalue. Then $G_k(V) := \mu_k - \nu_k$ defines the signed gap-length. The band structure of the spectrum of the operator $L_V$ is encoded in the map 
$$
\mathcal{G}: V \mapsto \left( \frac{1}{\bigl(\int_0^{2\pi} \sqrt{V(x)}\,dx\bigr)^2}, (G_k(V))_{k\in \N}\right)
$$
and the main result of \cite{Bruckner_Wayne} says that $\mathcal{G}$ is a real-analytic isomorphism between a neighbourhood of $V=1$ in $H^r_{\per}(\R)$ and a neighbourhood of $(\frac{1}{4\pi^2}, (0)_{k\in \N})$ in the space $\R\times h^{r-2}$, where 
$$
h^{r-2}=\{(a_k)_{k\in \N}: (a_k (1+k^2)^{(r-2)/2})_{k\in\N} \in l^2(\N)\}.
$$

\begin{Lemma} \label{ex_of_potential}
Let $r \in [1,3/2)$ and $0<\gamma<\frac{3}{2}-r$. In every $H^r$-neighbourhood of $V_0\equiv 1$ there exists a potential $V\in H^r_{\per}(\R)$ which is positive and even with respect to $\pi$ and a value $\tau_0>0$ such that for $\omega = \pi/\int_0^{2\pi} \sqrt{V(x)}\,dx$ and all $\tau \in (-\tau_0,\tau_0)$, we have  $(-c\vert k\vert^\gamma,c\vert k\vert^\gamma)\subset \rho (L_k)$ for all $k\in \N_{\odd}$ for a suitable $c=c(V)>0$. 
\end{Lemma}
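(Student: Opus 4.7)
The plan is to use the Bruckner--Wayne isomorphism $\mathcal{G}$ to engineer a potential $V$ close to $1$ in $H^r$ whose associated operator $L_V = -V^{-1}\partial_x^2$ has spectral gaps of width $\gtrsim |k|^\gamma$ centered near $\omega^2 k^2$, and then to transfer these gaps of $L_V$ to the desired spectral gaps of $L_k$ around $0$.

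First I would choose a target sequence of signed gap lengths with $|G_k|\asymp\epsilon|k|^\gamma$ for $k\in\N_{\odd}$ and $G_k=0$ otherwise. The assumption $\gamma<\tfrac{3}{2}-r$ is exactly what is needed for $\sum_k G_k^2(1+k^2)^{r-2}<\infty$, so that $(G_k)\in h^{r-2}$ with norm $O(\epsilon)$. For $\epsilon$ small enough, Bruckner--Wayne then yields a potential $V\in H^r_{\per}(\R)$, arbitrarily close to $1$ in $H^r$ (hence positive since $H^r_{\per}\hookrightarrow C(\R)$ for $r\geq 1$), realizing this prescribed gap-length data. Evenness of $V$ with respect to $\pi$ can be arranged by restricting $\mathcal G$ to the closed subspace of even potentials, on which it remains a local isomorphism, or by symmetrizing the constructed potential. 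Since the first component of $\mathcal G(V)$ equals $1/(\int_0^{2\pi}\sqrt V\,dx)^2=\omega^2/\pi^2$, real-analyticity of $\mathcal G$ forces $\mu_k(V),\nu_k(V)\to\omega^2k^2$ as $V\to 1$; in particular for small $\epsilon$ the $k$-th gap of $L_V$ has width $|G_k|\geq\epsilon|k|^\gamma$ and contains $\omega^2k^2$ in its interior.

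Next I would transfer the gap structure to $L_k$. Rewriting $L_k u=\lambda u$ as $-u''=\nu\widetilde V u$ with $\nu:=\omega^2(k^2-\tau)$ and $\widetilde V:=V+\lambda/\nu$ yields the elementary equivalence
\[
\lambda\in\sigma(L_k)\;\Longleftrightarrow\;\nu\in\sigma(L_{\widetilde V}).
\]
For $|\lambda|\leq c|k|^\gamma$ and $k\in\N_{\odd}$ one has $\lambda/\nu=O(|k|^{\gamma-2})$, so $\widetilde V$ is an arbitrarily small $H^r$-perturbation of $V$ uniformly in $k$. A Hellmann--Feynman computation for the Dirichlet and Neumann eigenvalues of $L_{V+c}$ under a constant shift $c$ of the weight gives $\mu_k(V+c)=\mu_k(V)-\mu_k(V)\,c+O(c^2)$ and likewise for $\nu_k$; substituting $c=\lambda/\nu$ and using $\mu_k(V)\approx\omega^2k^2\approx\nu$ shows that the $k$-th gap of $L_{\widetilde V}$ is the $k$-th gap of $L_V$ shifted by $-\lambda$, with a remainder of size $O(\lambda^2/k^4)$ that is negligible compared to $\epsilon|k|^\gamma$ (using $\gamma<2$). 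Picking $\tau_0>0$ and $c>0$ small enough so that $\nu=\omega^2(k^2-\tau)$ lies in the interior of the $k$-th gap of $L_V$ at distance $\geq\tfrac14\epsilon|k|^\gamma$ from both endpoints for all $k\in\N_{\odd}$ and $|\tau|\leq\tau_0$, the shift estimate yields $\nu\in\rho(L_{\widetilde V})$ for every $|\lambda|<c|k|^\gamma$; equivalently, $(-c|k|^\gamma, c|k|^\gamma)\subset\rho(L_k)$ for all $k\in\N_{\odd}$.

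The main obstacle is the \emph{uniform-in-$k$} control of the Hellmann--Feynman shift: one has to verify that the leading-order formula $\mu_k(V+c)-\mu_k(V)=-\lambda+(\text{remainder})$ has a remainder which, for $c=\lambda/\nu$, is genuinely smaller than the gap width $\epsilon|k|^\gamma$ for every $k\in\N_{\odd}$ simultaneously. This ultimately requires uniform control on the $L^2$-normalizations of the Dirichlet and Neumann eigenfunctions of $L_V$ and on the derivative of the Bruckner--Wayne inverse near $V\equiv 1$, quantities which are only implicitly handled by the isomorphism statement. A secondary technical point is maintaining evenness of $V$ while simultaneously prescribing the gap data.
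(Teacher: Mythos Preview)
Your construction via Bruckner--Wayne is the same starting point as the paper's, but your transfer from $L_V$ to $L_k$ is unnecessarily perturbative. You rewrite $L_k u=\lambda u$ as $\nu\in\sigma(L_{\tilde V})$ with $\tilde V=V+\lambda/\nu$ and then invoke Hellmann--Feynman to track the $k$-th gap under the $O(|k|^{\gamma-2})$ shift of the weight; as you yourself note, the resulting remainder estimate has to be made uniform in $k$, which is plausible but requires control on eigenfunction normalizations and eigenvalue spacings that you do not supply.

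The paper bypasses this entirely by working not with $L_k$ directly but with the \emph{weighted} operator $\frac{1}{V}L_k$. The point is the exact identity
\[
\sigma\Bigl(\tfrac{1}{V}\bigl(-\tfrac{d^2}{dx^2}-\mu_0 V\bigr)\Bigr)=\sigma(L_V)-\mu_0,
\]
valid because $\frac{1}{V}(-u''-\mu_0 Vu)=\lambda u$ is equivalent to $-u''=(\mu_0+\lambda)Vu$. Taking $\mu_0=\omega^2(k^2-\tau)$ and using that $(k^2\omega^2-d|k|^\gamma,k^2\omega^2+d|k|^\gamma)\subset\rho(L_V)$, one gets $(-\tilde c|k|^\gamma,\tilde c|k|^\gamma)\subset\rho(\frac{1}{V}L_k)$ for $\tilde c=d/2$ and $|\tau|<\tau_0:=d/(2\omega^2)$ immediately, with no perturbation argument and no $k$-dependent remainders. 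The passage from $\frac{1}{V}L_k$ to $L_k$ is then handled by a short monotonicity argument (band edges of $\frac{1}{W}(-\partial_x^2+Q)$ move outward from zero as the weight $W$ decreases to a positive constant), yielding $(-V_{\min}\tilde c|k|^\gamma,V_{\min}\tilde c|k|^\gamma)\subset\rho(L_k)$.

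So your route is not wrong in spirit, but the exact spectral shift for the weighted operator plus monotonicity is what makes the paper's argument short and avoids precisely the uniform-in-$k$ obstacle you flagged.
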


\begin{proof} 
Following the proof of Corollary~3 in \cite{Bruckner_Wayne} one can exploit the fact that the sequence $(k^\gamma)_{k\in \N}$ belongs to $h^{r-2}$ provided $0<\gamma< \frac{3}{2}-r$. Taking, e.g., $V := {\mathcal G}^{-1}(\omega^2/\pi^2, (a_k)_{k\in \N})$ with $(a_k)_{k\in \N}$ a small multiple $(k^\gamma)_{k\in \N}$ and $\omega\approx 1/2$ we obtain a function $V\in H^r_{\per}(\R)$ such that $(k^2\omega^2-d|k|^\gamma, k^2\omega^2+d|k|^\gamma) \in \rho(L_{V})$ for some $d>0$. Note that $\omega = \pi/\int_0^{2\pi} \sqrt{V(x)}\,dx$. Here we may assume that $V_{\min} = \min_\R V>0$. The fact that $(k^2\omega^2-d |k|^\gamma, k^2\omega^2+d|k|^\gamma)$ belongs to the resolvent set of $L_{V}$ is equivalent to 
$$
(-d |k|^\gamma, d|k|^\gamma) \subset \rho\Bigl(\frac{1}{V(x)}\bigl(-\frac{d^2}{dx^2}-\omega^2 k^2 V(x)\bigr)\Bigr) \mbox{ for all } k \in \N.
$$
If we set $\tilde c := d/2$ and  $\tau_0 := \frac{d-\tilde c}{\omega^2}=\frac{d}{2\omega^2}$ then $|\tau\omega^2+ t\tilde c|k|^\gamma| < d|k|^\gamma$ for $|\tau|<\tau_0$ and $|t|<1$ so that  
$$
(-\tilde c |k|^\gamma, \tilde c|k|^\gamma) \subset \rho\Bigl(\frac{1}{V(x)}\bigl(-\frac{d^2}{dx^2}-\omega^2 (k^2-\tau) V(x)\bigr)\Bigr)= \rho\Bigl(\frac{1}{V(x)} L_k\Bigr) \mbox{ for all } k \in \N \mbox{ and all } \tau \in (-\tau_0,\tau_0).
$$
Finally, using the monotonicity of band-edges with respect to $V(x)$ as stated in Lemma~\ref{monotonicity} we get that 
$$
(-V_{\min} \tilde c |k|^\gamma, V_{\min} \tilde c |k|^\gamma) \subset \rho(L_k) \mbox{ for all } k \in \N \mbox{ and all } \tau \in (-\tau_0,\tau_0)
$$
which finishes the proof.
\end{proof}

\section{Properties of $L_k$} \label{Sec:prop_Lk}

We assume that the potential $V$ satisfies one of the assumptions (V1), (V2) or (V3) from Theorem~\ref{Hauptresultat} and that $q(x)=\tau\omega^2 V(x)$ with $0\leq |\tau|<\tau_0$. Recall that $L_k: D(L_k)\subset L^2(\R)\to L^2(\R)$ given by 
$$
L_k := -\frac{d^2}{dx^2} - k^2\omega^2 V(x) + q(x).
$$
In this section we give two theorems on $k$-dependent estimates for bilinear forms associated to the operators $|L_k|$. The results are based on the spectral information for $L_k$ as stated in Lemmas~\ref{nullinaufgehenderluecke2}, \ref{nullinaufgehenderluecke_step} and \ref{ex_of_potential}. Recall in particular that there exist $k$-independent constants $c, \gamma>0$ such that 
\begin{equation} \label{spectral_info} 
(-c|k|^\gamma, c|k|^\gamma) \subset \rho(L_k) \,\mbox{ for all } \, k\in \Z_{\odd}
\end{equation} 
where $\gamma=1$ if $V$ satisfies (V1) or (V2) and $\gamma<\frac{3}{2}-r$ is a value associated with $V$ in case of (V3).  

\medskip

The operator $L_k$ is self-adjoint on a suitable domain $D(L_k)$ as explained in Sections~\ref{PerdeltaBeispiel}, \ref{PerstepBeispiel} and \ref{PerPotBeispiel}. In fact (cf. Theorem~VIII.15 in \cite{ReedSimon1}), $L_k$ is uniquely given by the semibounded, closed bilinear form
$$
b_{L_k}(u_k,v_k) = \int_\R u_k' \bar v_k' -\omega^2 (k^2-\tau) V(x) u_k \bar v_k\,dx, \quad u_k, v_k \in D(b_{L_k})=H^1(\R),
$$
where in case of assumption (V1) we use the notation $\int_\R \delta_{\per}(x) u_k \bar v_k \,dx := \sum_{n\in \Z} u_k(2\pi n)\bar v_k(2\pi n)$. If we denote by $(P^k_\lambda)_{\lambda\in\R}$ the projection-valued measure for $L_k$ then we find 
$$
\langle L_k u_k,v_k\rangle _{L^2(\R)} = b_{L_k}(u_k,v_k) = \int_{\R} \lambda d\langle P^k_\lambda u_k,v_k\rangle_{L^2(\R)} \text{ for } u_k\in D(L_k), v_k\in L^2(\R)
$$
and we can define the self-adjoint operator $|L_k|: D(|L_k|)=D(L_k)\subset L^2(\R) \to L^2(\R)$ and its corresponding bilinear form $b_{|L_k|}$ with domain $D(b_{|L_k|})=H^1(\R)$ by 
$$
\langle |L_k| u_k,v_k\rangle_{L^2(\R)} =b_{|L_k|}(u_k,v_k) = \int_{\R} |\lambda| d\langle P^k_\lambda u_k,v_k\rangle_{L^2(\R)} \text{ for } u_k\in D(|L_k|), v_k\in L^2(\R).
$$
Since $0\not\in \sigma(L_k)$ for all $k\in \Z_{\odd}$ we can introduce for $v\in L^2(\R)$ the splitting $v=v^++v^-$ with $v^\pm := P^{\pm,k} v$ and where
\begin{align*}
P^{+,k}v\coloneqq  \int_0^\infty 1 d\langle P^k_\lambda v,\cdot\rangle_{L^2(\R)},\ \ P^{-,k} v\coloneqq  \int_{-\infty}^0 1 d\langle P^k_\lambda v,\cdot\rangle_{L^2(\R)}.
\end{align*}
These splittings give rise to two new self-adjoint operators 
\begin{align} \label{ProjektionenkommutierenmitOperator}
L_k^\pm \colon P^{\pm,k} D(L_k)\subset P^{\pm,k} L^2(\R) \to P^{\pm,k} L^2(\R), \quad L_k^\pm u\coloneqq L_k u.
\end{align}
Their associated bilinear forms are restrictions of $b_{L_k}$ to $D(b_{L_k^\pm})\times D(b_{L_k^\pm})$ with $D(b_{L_k^\pm})= P^{\pm,k} D(b_{L_k})= P^{\pm,k} H^1(\R)$. Note that $L_k= L_k^++L_k^-$ and $|L_k|=L_k^+-L_k^-$. 

%Den Beweis sparen wir uns
% \begin{proof} We show selfadjointness for $L_k^+$, the statement for $L_k^-$ follows in the same manner. Due to
% \begin{align*}
% \ang{L_k^+ u,\varphi} = \int_0^\infty \lambda d\langle P_\lambda^k u,\varphi\rangle = \int_0^\infty \lambda d\langle u,P_\lambda^k \varphi\rangle = \langle u,L_k^+ \varphi \rangle
% \end{align*}
% we have that $L_k^+$ is symmetric. Since $L_k$ and the projection-valued measure $P_\lambda^k$ commute we also know that $L_k$ and $P^{+,k}$ commute which implies the mapping property of $L_k^+$ in \eqref{ProjektionenkommutierenmitOperator}. Since $L_k\pm i\Id$ has a bounded inverse, also $L_k^+ \pm i\Id$ has a bounded inverse and hence $L_k^+$ is self-adjoint by Theorem~VIII.3 in \cite{ReedSimon1}.
% \end{proof}

\medskip

\begin{Theorem} \label{Abschnachschlag}
There is $c>0$ such that
\begin{align} \label{Abschfuerurechts}
b_{|L_k|}(v,v) \geq c\abs{k}^\gamma \Vert v\Vert_{L^2(\R)}^2 \text{ for all } v\in H^1(\R) \text{ and all } k\in\Z_{\odd}
\end{align}
with $\gamma=1$ if $V$ satisfies (V1) or (V2) and $\gamma<\frac{3}{2}-r$ is a value associated with $V$ in case of (V3).
\end{Theorem}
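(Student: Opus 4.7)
The plan is to derive the estimate as a direct consequence of the spectral theorem applied to each self-adjoint operator $L_k$, combined with the uniform-in-$k$ spectral gap around $0$. By Lemmas~\ref{nullinaufgehenderluecke2}, \ref{nullinaufgehenderluecke_step} and \ref{ex_of_potential}, we already have a constant $c>0$ and an exponent $\gamma$ (equal to $1$ in cases (V1), (V2), and less than $\tfrac{3}{2}-r$ in case (V3)) such that \eqref{spectral_info} holds, i.e. $\sigma(L_k)\cap(-c|k|^\gamma,c|k|^\gamma)=\emptyset$ for every $k\in\Z_{\odd}$. The functional calculus then tells us that $\sigma(|L_k|)\subset[c|k|^\gamma,\infty)$, which is exactly the statement we want to convert into a lower bound on the associated quadratic form.

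First, I would record that the projection-valued measure $(P^k_\lambda)_{\lambda\in\R}$ of $L_k$ is supported in $\sigma(L_k)$, so that the scalar measure $\mu_v(A):=\langle P^k_A v,v\rangle_{L^2(\R)}$ satisfies $\mu_v(\R\setminus\sigma(L_k))=0$ and $\mu_v(\R)=\|v\|_{L^2(\R)}^2$ for every $v\in L^2(\R)$. For $v$ in the form domain $D(b_{|L_k|})=H^1(\R)$ the spectral representation from the paragraph preceding the theorem reads
\begin{equation*}
b_{|L_k|}(v,v)=\int_\R |\lambda|\,d\mu_v(\lambda)=\int_{\sigma(L_k)}|\lambda|\,d\mu_v(\lambda),
\end{equation*}
and the right-hand side is finite precisely because $v$ lies in this form domain.

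The estimate then falls out in one line: on the support $\sigma(L_k)$ of $\mu_v$ one has $|\lambda|\geq c|k|^\gamma$ by the spectral gap, hence
\begin{equation*}
b_{|L_k|}(v,v)=\int_{\sigma(L_k)}|\lambda|\,d\mu_v(\lambda)\geq c|k|^\gamma\int_{\sigma(L_k)}d\mu_v(\lambda)=c|k|^\gamma\|v\|_{L^2(\R)}^2.
\end{equation*}
This holds for every $v\in H^1(\R)$ and every $k\in\Z_{\odd}$, with the same $c$ and $\gamma$ as in \eqref{spectral_info}.

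Honestly, there is no genuine obstacle here; the content of the theorem is essentially a re-reading of Lemmas~\ref{nullinaufgehenderluecke2}, \ref{nullinaufgehenderluecke_step}, \ref{ex_of_potential} through the lens of the functional calculus. The one point that deserves attention in the write-up is that the identity $b_{|L_k|}(v,v)=\int|\lambda|\,d\langle P^k_\lambda v,v\rangle$ is valid on all of the form domain $H^1(\R)$, not only on $D(L_k)$; this is the standard extension of the quadratic form by the spectral integral and is implicit in the construction of $|L_k|$ from the semibounded closed form $b_{L_k}$ via Theorem~VIII.15 of \cite{ReedSimon1} already cited at the beginning of this section.
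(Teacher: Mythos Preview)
Your proof is correct and rests on the same ingredients as the paper's---the spectral gap \eqref{spectral_info} and the functional calculus for $L_k$---but you package them more efficiently. The paper proceeds by splitting $L_k=L_k^++L_k^-$ via the projections $P^{\pm,k}$, applying the Rayleigh-quotient characterization $\inf\sigma(A)=\inf_{f\in D(A)}\langle Af,f\rangle/\|f\|^2$ separately to $L_k^+$ and $-L_k^-$, recombining to obtain the bound on $D(L_k)$, and then invoking the density of $D(L_k)$ in the form domain $H^1(\R)$. You bypass all of this by observing directly that the spectral measure $\mu_v$ is supported where $|\lambda|\geq c|k|^\gamma$ and that the identity $b_{|L_k|}(v,v)=\int|\lambda|\,d\mu_v(\lambda)$ already holds on the full form domain. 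Your route is shorter; the paper's explicit $\pm$-decomposition has the side benefit of foreshadowing the splitting $\mc{H}=\mc{H}^+\oplus\mc{H}^-$ used later in Section~\ref{Sec:fa}, but for the statement at hand your argument is entirely adequate.
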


\begin{proof}
Recall that for a self-adjoint operator $A\colon D(A)\subset L^2(\R) \to L^2(\R)$ which is bounded from below, we have
\begin{align} \label{Raylieghfuerposdef}
\inf_{f\in D(A)} \frac{\ang{Af,f}_{L^2(\R)}}{\Vert f\Vert_{L^2(\R)}^2} = \inf \sigma (A).
\end{align}
The idea is now to use the splitting of the indefinite operator $L_k$ into a positive definite and a negative definite operator $L_k^\pm$, apply \eqref{Raylieghfuerposdef} and then use the density of $D(L_k)$ in $H^1(\R)$. From \eqref{Raylieghfuerposdef} and \eqref{spectral_info} we conclude that
\begin{align} \label{aufDLplus}
\inf_{v\in P^{+,k} D(L_k)} \frac{\langle L_k^+ v,v\rangle_{L^2(\R)}}{\Vert v\Vert^2_{L^2(\R)}} \geq c \vert k\vert^\gamma,\quad \inf_{v\in P^{-,k} D(L_k)} -\frac{\langle L_k^- v,v\rangle_{L^2(\R)}}{\Vert v\Vert^2_{L^2(\R)}} \geq c \vert k\vert^\gamma
\end{align}
for some $c>0$. By \eqref{aufDLplus} one obtains for all $v\in D(|L_k|)=D(L_k)$
\begin{align*}
b_{|L_k|}(v,v)= \langle |L_k| v,v\rangle_{L^2(\R)} &=
\langle L_k^+ P^{+,k} v,P^{+,k} v\rangle _{L^2(\R)}-\langle L_k^- P^{-,k}v,P^{-,k}v\rangle  \\ 
&\geq c\vert k\vert^\gamma \paren{\Vert P^{+,k}v\Vert^2_{L^2(\R)}+\Vert P^{-,k} v\Vert^2_{L^2(\R)}} = c \vert k\vert^\gamma \Vert v\Vert^2_{L^2(\R)}
\end{align*}
and \eqref{Abschfuerurechts} then follows from the density statement mentioned above. 
\end{proof}

The benefit of an estimate like \eqref{Abschfuerurechts} lies in the $k$-dependence. In the following result we construct a similar lower bound with $\Vert v'\Vert_{L^2(\R)}^2$ instead of $\Vert v\Vert_{L^2(\R)}^2$ in the right hand side of \eqref{Abschfuerurechts}.

\begin{Theorem} \label{ZielAbschnittNonlinearity}
There is a constant $\tilde c>0$ such that
\begin{align} \label{Abschfuernablaurechts}
b_{|L_k|}(v,v) \geq \tilde c |k|^{\delta} \Vert v'\Vert_{L^2(\R)}^2 \text{ for all } v\in H^1(\R) \text{ and all } k\in\Z_{\odd}
\end{align}
with $\delta=\gamma-4=-3$ if $V$ satisfies (V1), $\delta=\gamma-2=-1$ if $V$ satisfies (V2) and $\delta=\gamma-2<-\frac{1}{2}-r$ is a value associated with $V$ in case of (V3).
\end{Theorem}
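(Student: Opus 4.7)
My plan starts from the elementary identity built into the bilinear form,
$$
b_{L_k}(v,v) \;=\; \|v'\|_{L^2(\R)}^2 \;-\; \omega^2(k^2-\tau)\!\int_\R V(x)|v|^2\,dx,
$$
rearranged as $\|v'\|^2 = b_{L_k}(v,v)+\omega^2(k^2-\tau)\int V|v|^2$. Since $d\langle P^k_\lambda v,v\rangle_{L^2(\R)}$ is a positive measure, the spectral theorem gives $|b_{L_k}(v,v)| = \bigl|\int\lambda\,d\langle P^k_\lambda v,v\rangle\bigr| \leq \int|\lambda|\,d\langle P^k_\lambda v,v\rangle = b_{|L_k|}(v,v)$, and since $\omega^2(k^2-\tau)\geq 0$ for $k\in\Z_{\odd}$, I obtain the master estimate
$$
\|v'\|_{L^2}^2 \;\leq\; b_{|L_k|}(v,v) \;+\; \omega^2(k^2-\tau)\!\int_\R V|v|^2\,dx.
$$
Everything then reduces to controlling $\int V|v|^2$ and feeding the result into Theorem~\ref{Abschnachschlag}, which supplies $\|v\|_{L^2}^2 \leq c^{-1}|k|^{-\gamma}b_{|L_k|}(v,v)$.

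For the bounded-potential cases (V2) and (V3) the control of $\int V|v|^2$ is trivial: $V\in L^\infty(\R)$ so $\int V|v|^2 \leq \|V\|_\infty\|v\|_{L^2}^2$. Inserting Theorem~\ref{Abschnachschlag} into the master estimate yields $\|v'\|^2 \leq b_{|L_k|}(v,v)(1+C|k|^{2-\gamma}) \leq C'|k|^{2-\gamma}b_{|L_k|}(v,v)$ for all $|k|\geq 1$, i.e.\ $b_{|L_k|}(v,v)\geq \tilde c|k|^{\gamma-2}\|v'\|^2$. With $\gamma=1$ this is the claimed $\delta=-1$ in (V2), and with $\gamma<\tfrac{3}{2}-r$ it gives $\delta=\gamma-2<-\tfrac{1}{2}-r$ in (V3).

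The delta-case (V1) is the main obstacle, because $\int V|v|^2 = \alpha\|v\|_{L^2}^2 + \beta\sum_{n\in\Z}|v(2\pi n)|^2$ and one cannot bound the boundary sum by $\|v\|_{L^2}^2$ alone. The key new tool is a scale-dependent trace estimate: writing $v(2\pi n) = v(x) - \int_{2\pi n}^x v'(t)\,dt$, squaring and applying Cauchy–Schwarz, then integrating $x$ over $[2\pi n, 2\pi n + \ell]$ with $\ell\in(0,2\pi]$ gives
$$
|v(2\pi n)|^2 \;\leq\; \tfrac{2}{\ell}\!\int_{2\pi n}^{2\pi n+\ell}\!|v|^2\,dx \;+\; \ell\!\int_{2\pi n}^{2\pi n+\ell}\!|v'|^2\,dx,
$$
and summing over the disjoint windows yields $\sum_n|v(2\pi n)|^2 \leq \tfrac{2}{\ell}\|v\|_{L^2}^2 + \ell\|v'\|_{L^2}^2$. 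Plugging this into the master estimate, choosing $\ell$ proportional to $|k|^{-2}$ so that $\omega^2(k^2-\tau)\beta\ell\leq \tfrac12$, and absorbing the resulting $\|v'\|^2$ term on the left, I am left with $\|v'\|^2 \leq C\bigl(b_{|L_k|}(v,v) + k^4\|v\|_{L^2}^2\bigr)$. A final application of Theorem~\ref{Abschnachschlag} with $\gamma=1$ turns the $k^4\|v\|^2$ into $C'|k|^3 b_{|L_k|}(v,v)$, so altogether $\|v'\|^2 \leq C''|k|^3 b_{|L_k|}(v,v)$, i.e.\ $b_{|L_k|}(v,v)\geq \tilde c|k|^{-3}\|v'\|^2=\tilde c|k|^{\gamma-4}\|v'\|^2$, the advertised $\delta=-3$. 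The admissibility constraint $\ell\leq 2\pi$ is satisfied for every $|k|\geq 1$ thanks to $\beta>32\alpha$ and $\omega^2=\tfrac{1}{16\alpha}$ in (V1); the loss of two extra powers of $|k|$ compared with (V2) is precisely the price paid for absorbing $\ell\|v'\|^2$ through a $k$-dependent window.
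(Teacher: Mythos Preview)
Your argument is correct and in fact cleaner than the paper's. The crucial difference is the starting point: you use the single spectral inequality $|b_{L_k}(v,v)|\le b_{|L_k|}(v,v)$, which follows immediately from $\bigl|\int\lambda\,d\langle P^k_\lambda v,v\rangle\bigr|\le \int|\lambda|\,d\langle P^k_\lambda v,v\rangle$, and apply it to the identity $\|v'\|^2=b_{L_k}(v,v)+\omega^2(k^2-\tau)\int V|v|^2$. This bypasses entirely the paper's decomposition $v=v^++v^-$ into positive and negative spectral parts, the subsequent case distinction (Case 1a/1b for $v\in D(b_{L_k^+})$, Case 2 for $v\in D(b_{L_k^-})$), and the final recombination step. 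Both proofs ultimately rely on the same two ingredients---Theorem~\ref{Abschnachschlag} for the $\|v\|_{L^2}^2$ control and a scale-dependent trace estimate (your $\ell$-window inequality is exactly Lemma~\ref{fundablpktwgegennull} with $\varepsilon\sim\ell$) for the delta contribution in (V1)---so the information content is the same, but your route reaches the conclusion in fewer lines and without the somewhat artificial splitting into sign-cases. One small remark: your master estimate as written uses $\omega^2(k^2-\tau)\ge 0$; should it happen that $k^2-\tau<0$ (which cannot occur in (V1), (V2) since there $|\tau|<1$, but is not a priori excluded by the construction in (V3)), the second term in the identity is nonpositive and you get the even stronger bound $\|v'\|^2\le b_{|L_k|}(v,v)$ directly, so the argument goes through regardless.
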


\begin{proof} For $k\in \Z_{\odd}$ let $V_k(x)=-\omega^2k^2 V(x)+q(x)= -\omega^2(k^2-\tau)V(x)$. We prove \eqref{Abschfuernablaurechts} by several case distinctions depending on the assumption on $V$. Let $\lambda\in (0,1)$ be fixed for the whole proof. We begin with $V$ satisfying (V1).

\smallskip

\noindent
Case 1: Let $v\in D(b_{L_k^+})$. We distinguish two cases. As usual we use the notation $\int_\R \delta_{\per}(x) |v|^2\,dx := \sum_{n\in \Z} |v(2\pi n)|^2$. 

a): $\int_{\R} |v'|^2 +\frac{1}{1-\lambda} V_k |v|^2\,dx \geq 0$: Then we directly obtain $\int_{\R} |v'|^2 + V_k |v|^2\,dx \geq \lambda\int_{\R} |v'|^2 dx$.

b): $-\left(\int_{\R} |v'|^2+\frac{1}{1-\lambda} V_k |v|^2\,dx\right) \geq 0$: Using Lemma~\ref{fundablpktwgegennull} we get that for every $\varepsilon>0$ 
\begin{equation} \label{PunktabschSummemitepsilon}
\int_\R -V_k |v|^2\,dx \leq Ck^2\left(\varepsilon \|v'\|_{L^2(\R)}^2 + (1+\frac{1}{\varepsilon}) \|v\|_{L^2(\R)}^2\right).
\end{equation}
Therefore,
\begin{equation} \label{konsequenz1}
\int_{\R} |v'|^2\, dx \leq -\frac{1}{1-\lambda} \int_\R  V_k|v|^2\,dx  \leq \frac{Ck^2}{1-\lambda} \left(\varepsilon \|v'\|_{L^2(\R)}^2 + (1+\frac{1}{\varepsilon}) \|v\|_{L^2(\R)}^2\right).
\end{equation}
In particular, for $\varepsilon= \varepsilon_k\coloneqq \frac{1-\lambda}{2 Ck^2}$ we have 
\begin{equation} \label{konsequenz2}
\Vert v'\Vert_{L^2(\R)}^2 \leq \frac{2C k^2}{1-\lambda} (1+\frac{1}{\varepsilon})\Vert v\Vert^2_{L^2(\R)}\leq \bar C |k|^4\|v\|_{L^2(\R)}^2.
\end{equation}
Together with Theorem~\ref{Abschnachschlag} we conclude
\begin{align} \label{Falleinsb}
\frac{\int_{\R} |v'|^2+V_k |v|^2\,dx}{\int_{\R} |v'|^2 dx} = \frac{\paren{\int_{\R}|v'|^2 +V_k|v|^2\,dx}}{\Vert v\Vert^2_{L^2(\R)}} \frac{\Vert v\Vert_{L^2(\R)}^2}{\Vert v'\Vert^2_{L^2(\R)}} \geq \tilde C\vert k\vert^{-3}.
\end{align}
Merging Case 1a) and \eqref{Falleinsb} we deduce $b_{|L_k|}(v,v) \geq \tilde c\vert k\vert^{-3} \|v'\|_{L^2(\R)}^2$ for all $v\in D(b_{L_k^+})$ and some constant $\tilde c>0$.

\smallskip

\noindent
Case 2: Let $v\in D(b_{L_k^-})$, i.e., $\int_{\R} |v'|^2 + V_k|v|^2\,dx \leq -c\vert k\vert \int_{\R} |v|^2 dx$. By \eqref{PunktabschSummemitepsilon} with $\varepsilon=\varepsilon_k=\frac{1}{2Ck^2}$ we deduce
\begin{align} \label{wiederOvonkhochdrei}
\int_{\R} |v'|^2 dx \leq 2\left( C|k|^2(1+\frac{1}{\varepsilon_k})-c|k|\right) \Vert v\Vert^2_{L^2(\R)} \leq \bar C|k|^4 \Vert v\Vert^2_{L^2(\R)}.
\end{align}
In analogy to the first case we now conclude
\begin{align*}
\frac{-\left(\int_{\R} |v'|^2 +V_k |v|^2\,dx\right)}{\Vert v'\Vert^2_{L^2(\R)}} = \frac{-\left(\int_{\R} |v'|^2 + V_k|v|^2\,dx\right)}{\Vert v\Vert^2_{L^2(\R)}} \frac{\Vert v\Vert^2_{L^2(\R)}}{\Vert v'\Vert^2_{L^2(\R)}} \geq \tilde c\vert k\vert \frac{\Vert v\Vert^2_{L^2(\R)}}{\Vert v'\Vert^2_{L^2(\R)}}
\end{align*}
and due to \eqref{wiederOvonkhochdrei} the fraction $\frac{\Vert v\Vert^2_{L^2(\R)}}{\Vert v'\Vert^2_{L^2(\R)}}$ is of order $\vert k\vert^{-4}$ which establishes our claim in the case $v\in D(b_{L_k^-})$.

\smallskip

Finally, merging the two estimates for $D(b_{L_k^+})$ and $D(b_{L_k^-})$ from Case 1 and Case 2 we end up with
\begin{align*}
b_k(v^+,v^+)-b_k(v^-,v^-) \geq \tilde c\vert k\vert^{-3}\int_{\R} \paren{\paren{|v^{{+'}}|}^2+\paren{|v^{{-'}}|}^2} dx \geq \frac{\tilde{c}}{2}\vert k\vert^{-3}\int_{\R} |v'|^2 dx
\end{align*}
for a constant $\tilde{c}>0$ and the proof is done. 

\smallskip

Let us now discuss the situation where $V$ satisfies (V2) or (V3). The proof follows the same patterns as before. Let us indicate the changes. Note that now $V\in L^\infty(\R)$. Case 1a) is unchanged. In Case 1b) inequality \eqref{PunktabschSummemitepsilon} is replaced by 
\begin{equation}
\int_\R -V_k |v|^2\,dx \leq C k^2 \|v\|_{L^2(\R)}^2.
\end{equation}
Therefore, using the analogy of the steps \eqref{konsequenz1}, \eqref{konsequenz2} we arrive instead of \eqref{Falleinsb} at 
\begin{align} 
\frac{\int_{\R} |v'|^2+V_k |v|^2\,dx}{\int_{\R} |v'|^2 dx} \geq \tilde C|k|^{-1}.
\end{align}
In Case 2) inequality \eqref{wiederOvonkhochdrei} is replaced by
\begin{align} \label{wiederOvonkhochdrei_r=1/2}
\Vert v'\Vert^2_{L^2(\R)} \leq Ck^2 \Vert v\Vert^2_{L^2(\R)}
\end{align}
which leads to 
$$
\frac{-\left(\int_{\R} |v'|^2+V_k|v|^2\,dx\right)}{\Vert v'\Vert^2_{L^2(\R)}}  \geq \tilde c |k|^{-1}.
$$
The proof is then	 finished as before.
\end{proof}

\section{The functional analytic framework for breathers} \label{Sec:fa}

In this section we define a suitable Hilbert space in which we seek for solutions. We use the projection-valued measure $(P_\lambda^k)_{\lambda\in \R}$ defined in the previous section to represent $L_k = \int_\R \lambda\, dP_\lambda^k$.  
\begin{Definition} \label{def_H} 
Define the Hilbert space $\mc{H}$ over the field $\R$ by
\begin{equation}
\mc{H} \coloneqq \Bigl\{ \tilde{u}=(u_k)_{k\in \Z_{\odd}}: u_k\in H^1(\R), \bar u_k =u_{-k} \text{ for all } k\in\Z_{\odd} \text{  and } \sum_{k\in\Z_{\odd}} \int_\R  |\lambda| \langle P_\lambda^k u_k,u_k\rangle_{L^2(\R)}\,d\lambda <\infty \Bigr\}
\end{equation}
with the canonical inner product and norm
\begin{align*}
\ang{\tilde{u},\tilde{v}}_{\mc{H}} \coloneqq \sum_{k\in\Z_{\odd}} \int_\R \vert \lambda \vert  \langle P_\lambda^k u_k,u_k\rangle\,d\lambda\ \ \  \text{ and } \ \ \ \Vert \tilde{u}\Vert_{\mc{H}} \coloneqq \sqrt{\ang{\tilde{u},\tilde{u}}} \text{ for } \tilde{u}, \tilde{v}\in\mc{H}.
\end{align*}
\end{Definition}

Next, we introduce projections $\mc{P}^+$ and $\mc{P}^-$ to deal with the indefinite character of the problem. Let
\begin{align*}
\mc{H}^+ &\coloneqq \mc{P}^+\mc{H}\coloneqq \{\tilde{u}\in \mc{H}: P^{-,k} u_k= 0 \text{ for all } k\in \Z_{\odd} \},\\
\mc{H}^- &\coloneqq \mc{P}^-\mc{H}\coloneqq \{\tilde{u}\in \mc{H}: P^{+,k} u_k= 0 \text{ for all } k\in \Z_{\odd} \}
\end{align*}
and set $\tilde{u}^\pm\coloneqq \mc{P}^\pm\tilde{u}$. The potentials $V$ are constructed such tha for all $k\in \Z_{\odd}$ we have $0\not\in \sigma(L_k)$ so that $u_k=0\Leftrightarrow P^{+,k}u_k=P^{-,k}u_k=0$. Therefore we obtain the splitting $\mc{H}=\mc{H}^+ \oplus \mc{H}^-$. If we consider the bilinear form $B\colon \mc{H}\times\mc{H}\to \C$ defined by
\begin{align*}
B(\tilde{u},\tilde{v})=\sum_{k\in\Z_{\odd}} b_{L_k}(u_k,v_k) \text{ for } \tilde{u},\tilde{v}\in\mc{H}
\end{align*}
then we obtain 
\begin{align} \label{SplittingvonHRmcH}
B(\tilde{u},\tilde{u})=\Vert \tilde{u}^+\Vert^2_{\mc{H}} - \Vert \tilde{u}^-\Vert^2_{\mc{H}} \text{ for all } \tilde{u}\in\mc{H}.
\end{align}
Hence, $\Vert\tilde{u}\Vert^2_{\mc{H}} = \Vert\tilde{u}^+\Vert^2_{\mc{H}}+\Vert\tilde{u}^-\Vert^2_{\mc{H}}$, and in particular $\Vert\tilde{u}^+\Vert_{\mc{H}}, \Vert \tilde{u}^-\Vert_{\mc{H}}\leq \Vert\tilde{u}\Vert_{\mc{H}}$ for all $\tilde{u}\in\mc{H}$.

\medskip

Now we establish integrability of the composite function $u(x,t)=\sum_{k\in\Z_{\odd}} u_k(x)e^{ik\omega t}$ in space and time as expressed by the following theorem. The proof, which is rather complex, is given in Section~\ref{sec:proof_for_S}.

\begin{Theorem} \label{HauptresultatSection5.5}
With $D=\R\times (0,T)$ the linear operator $\mc{S}\colon \mc{H}\to L^q(D)$ given by 
\begin{align*}
(\mc{S}\tilde{u})(x,t)\coloneqq \sum_{k\in\Z_{\odd}} u_k(x) e^{ik\omega t}
\end{align*}
is one-to-one and bounded for all $q\in [2,q^\ast)$ where 
$$
q^\ast = \left\{\begin{array}{ll}
3 & \mbox{ in case (V1)}, \vspace{\jot}\\
4 & \mbox{ in case (V2)}, \vspace{\jot}\\
\frac{4}{2-\gamma} & \mbox{ in case (V3)}.
\end{array}
\right.
$$
For the same values of $q$ the operator $\mc{S}\colon \mc{H}\to L^q(K)$ is compact for every compact set $K\subset \overline{D}$.  
\end{Theorem}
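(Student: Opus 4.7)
Injectivity of $\mc{S}$ is immediate: if $\sum_k u_k(x)e^{ik\omega t}=0$ in $L^q(D)$, then orthogonality of the characters $(e^{ik\omega t})_{k\in\Z_{\odd}}$ on $(0,T)$ forces every $u_k$ to vanish. For boundedness I would combine Hausdorff--Young in the time variable, Minkowski's integral inequality in $x$, and one-dimensional Sobolev interpolation applied modewise, finally absorbing the $|k|$-weights into a H\"older sum in $k$. Hausdorff--Young for Fourier series with $q\geq 2$, $q'=q/(q-1)$, gives
\[
\|\mc{S}\tilde u(x,\cdot)\|_{L^q(0,T)}\leq C_q \Bigl(\sum_{k\in\Z_{\odd}}|u_k(x)|^{q'}\Bigr)^{1/q'},
\]
and Minkowski's integral inequality in $L^{q/q'}(\R)$ (note that $q/q'\geq 1$) upgrades this to
\[
\|\mc{S}\tilde u\|_{L^q(D)}\leq C_q\Bigl(\sum_{k\in\Z_{\odd}}\|u_k\|_{L^q(\R)}^{q'}\Bigr)^{1/q'}.
\]

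To control each $\|u_k\|_{L^q(\R)}$ I would use $\|u_k\|_{L^q(\R)}^q\leq \|u_k\|_{L^\infty(\R)}^{q-2}\|u_k\|_{L^2(\R)}^2$ together with the one-dimensional interpolation $\|u_k\|_{L^\infty(\R)}^2\leq 2\|u_k\|_{L^2(\R)}\|u_k'\|_{L^2(\R)}$ and the spectral estimates
\[
\|u_k\|_{L^2(\R)}^2\leq C|k|^{-\gamma}b_{|L_k|}(u_k,u_k),\qquad \|u_k'\|_{L^2(\R)}^2\leq C|k|^{-\delta}b_{|L_k|}(u_k,u_k)
\]
supplied by Theorems~\ref{Abschnachschlag} and \ref{ZielAbschnittNonlinearity}. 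A short computation produces $\|u_k\|_{L^q(\R)}^{q'}\leq C|k|^{-\beta}b_{|L_k|}(u_k,u_k)^{q'/2}$ with an explicit $\beta=\beta(q,\gamma,\delta)$, and H\"older's inequality in $k$ with conjugate exponents $2/q'$ and $2/(2-q')$ gives
\[
\|\mc{S}\tilde u\|_{L^q(D)}^{q'} \leq C\Bigl(\sum_{k\in\Z_{\odd}} |k|^{-2\beta/(2-q')}\Bigr)^{(2-q')/2}\|\tilde u\|_{\mc{H}}^{q'}.
\]
The summability condition $2\beta/(2-q')>1$ simplifies to $q<2+4\gamma/(2-\gamma-\delta)$, which, inserted into the three triples $(\gamma,\delta)=(1,-3),(1,-1),(\gamma,\gamma-2)$, produces exactly $q^*=3,\,4,\,4/(2-\gamma)$ respectively. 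The endpoint $q=2$ collapses to Parseval and is handled separately using only $|k|^{-\gamma}\leq 1$.

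For compactness of $\mc{S}\colon\mc{H}\to L^q(K)$ on a compact set $K\subset\overline D$, my plan is to approximate $\mc{S}$ in operator norm by the truncations $\mc{S}_N\tilde u:=\sum_{|k|\leq N} u_k(x)e^{ik\omega t}$. Restricting the H\"older argument above to the tail $|k|>N$ shows $\|\mc{S}-\mc{S}_N\|_{\mc{H}\to L^q(D)}\to 0$ for every $q\in[2,q^*)$. Each $\mc{S}_N$ is itself compact into $L^q(K)$: a bounded sequence $(\tilde u^{(n)})$ in $\mc{H}$ provides, for each $|k|\leq N$, a sequence $(u_k^{(n)})_n$ which is bounded in $H^1(\R)$ by the spectral estimates, so the Rellich--Kondrachov theorem applied on the compact spatial projection $K_x$ of $K$ yields a convergent subsequence in $L^q(K_x)$, and a diagonal argument over the finitely many indices $|k|\leq N$ produces an $L^q(K)$-convergent subsequence of $\mc{S}_N\tilde u^{(n)}$. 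Therefore $\mc{S}$ is a norm limit of compact operators and is itself compact.

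I expect the main obstacle to be the algebraic bookkeeping: one has to track carefully the $|k|$-powers produced by the spectral estimates, the $L^\infty$--$L^2$ interpolation, Hausdorff--Young, and H\"older, and verify that the resulting summability threshold collapses in each case precisely to the value of $q^*$ advertised in the statement. Once the uniform-in-$k$ decay is pinned down, both injectivity and the truncation argument for compactness fall out essentially for free.
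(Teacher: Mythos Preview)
Your argument is correct and reaches exactly the advertised thresholds $q^\ast=3,4,4/(2-\gamma)$, but it proceeds along a different route than the paper. The paper takes the Fourier transform in \emph{both} variables and factors $\mc{S}=\mc{S}_1\circ\mc{S}_2\circ I$: first the embedding $I\colon\mc{H}\to\hat H$ (your spectral estimates), then a map $\mc{S}_2\colon\hat H\to L^{q'}(\R\times\Z)$ established via a H\"older argument against the weight $(|\xi|^{2\gamma/\rho}+|k|^\gamma)^{q'/2}$ with an auxiliary parameter $\rho$ chosen case by case, and finally a joint Hausdorff--Young $\mc{S}_1\colon L^{q'}(\R\times\Z)\to L^q(D)$. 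You instead apply Hausdorff--Young only in the time variable, stay in physical space in $x$, and replace the paper's Fourier--weight argument by the one-dimensional Gagliardo--Nirenberg bound $\|u_k\|_{L^q}^q\leq C\|u_k\|_{L^2}^{(q+2)/2}\|u_k'\|_{L^2}^{(q-2)/2}$ applied modewise. Your approach is arguably more elementary---it avoids the parameter $\rho$ and the anisotropic weight $|\xi|^{2\gamma/\rho}+|k|^\gamma$---while the paper's Fourier-side factorisation makes the operator-norm convergence $\mc{S}_2^{k_0}\to\mc{S}_2$ (and hence compactness) fall out of a single tail estimate for $I_2$. The compactness arguments are essentially the same: truncate in $k$, use the tail of the $k$-sum for operator-norm convergence, and invoke Rellich on the finitely many remaining $H^1$-modes. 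One small point worth making explicit in your write-up: for the operator-norm convergence at $q=2$ you should note separately that $\|(\mc{S}-\mc{S}_N)\tilde u\|_{L^2(D)}^2=\sum_{|k|>N}\|u_k\|_{L^2}^2\leq CN^{-\gamma}\|\tilde u\|_{\mc{H}}^2$, since at $q=2$ there is no H\"older tail to exploit.
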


\begin{Remark} \label{crit_sob} In case of assumptions (V2) and (V3) the above embedding $\mc{S}\colon \mc{H}\to L^q(D)$ is bounded also for $q=q^\ast$. This is due to the fact that in this case we show in the proof of Theorem~\ref{HauptresultatSection5.5} the embedding $\mc{S}: \mc{H}\to H^\frac{\gamma}{2}(D)$, cf. \eqref{vorauss_fuer_einbettung} with $\rho=2$. In this case however, it is known that the embedding  $H^\frac{\gamma}{2}(D)\to L^q(D)$ not only holds for $2\leq q< q^\ast= \frac{4}{2-\gamma}$ but also for the endpoint $q=q^\ast=\frac{4}{2-\gamma}$, cf. \cite{Hitchhiker}. In the case of (V1) this question of the existence of the endpoint embedding is unknown to us because in this case the underlying fractional Sobolev space is anisotropic with respect to the directions $x$ and $t$ and hence the usual proof of the endpoint embedding via the Hardy-Littlewood-Sobolev inequality does not work.
\end{Remark}

\section{Minimization on the generalized Nehari manifold} \label{MinimgenNM}

Now we find the time-periodic solution of \eqref{Einl2scalar}$_\pm$ as a minimizer of a functional $J$ on the so-called generalized Nehari manifold. We are using Theorem~35, Chapter~4 from \cite{SW}, where an abstract result is given that guarantees the existence of minimizer of an indefinite functional on the generalized Nehari manifold. We first treat the ''$+$''-case in \eqref{Einl2scalar}$_\pm$. At the end of this section we explain how the ''$-$''-case can be treated. Let $J\colon \mc{H} \to \R$ be given by
\begin{align*}
J(\tilde{u}) \coloneqq J_0(\tilde{u})-J_1(\tilde{u}) 
\end{align*}
with
\begin{align*}
J_0(\tilde{u})\coloneqq \frac{1}{2}B(\tilde u,\tilde u), \quad J_1(\tilde{u})\coloneqq  \frac{1}{T}\int_D F(x,\mc{S}\tilde{u})\, d(x,t)
\end{align*}
and where $\mc{S}$ is the operator from Theorem~\ref{HauptresultatSection5.5} which reproduces $u(x,t)$ from the Fourier-variables $\tilde u = (u_k)_{k\in \Z_{\odd}}\in \mc{H}$. Due to assumption (H1) and Theorem~\ref{HauptresultatSection5.5} the functional $J$ is well-defined on $\mc{H}$. The generalized Nehari manifold is defined as 
\begin{align*}
\mc{M}\coloneqq \{\tilde{u}\in \mc{H}\setminus \mc{H}^-: J'(\tilde{u})[\tilde{u}]=0 \text{ and } J'(\tilde{u})[\tilde{v}]=0 \text{ for all } \tilde{v}\in \mc{H}^-\}.
\end{align*}
Moreover, for $\tilde{u}\in\mc{H}$ we set
\begin{align*}
\mc{H}(\tilde{u})\coloneqq \R^+\tilde{u} \oplus \mc{H}^- = \R^+ \tilde{u}^+ \oplus \mc{H}^-,
\end{align*}
where $\R^+= [0,\infty)$. Finally, let $S$ denote the unit ball in $\mc{H}$ and define $S^+\coloneqq S\cap \mc{H}^+$.

\medskip

By standard calculations (compare Proposition~1.12 in \cite{Willem}) we deduce $J\in C^1(\mc{H})$ and 
\begin{align*}
J'(\tilde{u})[\tilde{v}] = J_0'(\tilde u)[\tilde v]-J_1'(\tilde u)[\tilde v]= B(\tilde u, \tilde v)-\frac{1}{T}\int_D \vert \mc{S}\tilde{u}\vert^{p-1} \mc{S}\tilde{u} \overline{\mc{S}\tilde{v}} d(x,t).
\end{align*}
Notice that $\tilde u, \tilde v\in \mc{H}$ imply that $\mc{S}\tilde u, \mc{S}\tilde v$ are read-valued functions and that $J_0'(\tilde u)[\tilde v], J_1'(\tilde u)[\tilde v]\in \R$. The verification of $J'[\tilde u]=0$ for a suitable $\tilde u\in \mc{H}$ is a key point in this section. We simplify this task by the following lemma. The proof is given in the Appendix.

\begin{Lemma} For $k\in \Z_{\odd}$ let 
$$
\mc{H}_{k,\mono} \coloneqq \left\{\tilde{\phi}=(\phi_l)_{l\in\Z_{\odd}}: \phi_l = \phi\delta_{kl} \mbox{ for some } \phi \in C_c^\infty(\R)\right\}. 
$$ 
Let $\tilde u\in\mc{H}$. Then the following are equivalent:
\begin{itemize}
\item[(i)] for all $k\in \Z_{\odd}$ we have $J'(\tilde u)[\tilde \phi]=0$ for all $\tilde\phi\in \mc{H}_{k,\mono}$
\item[(ii)] $J'(\tilde u)=0$.
\end{itemize}
\label{ueberpruefung_cp}
\end{Lemma}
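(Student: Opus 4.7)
The plan is a density-plus-continuity argument. The direction (ii) $\Rightarrow$ (i) is immediate because each $\mc{H}_{k,\mono}$ sits naturally inside $\mc{H}$ once the reality pairing $\bar\phi_k=\phi_{-k}$ is taken into account (a test $\tilde\phi$ with only the $k$-th slot nonzero is to be read together with its $-k$-conjugate partner so as to belong to the real space $\mc{H}$). The substantive direction is (i) $\Rightarrow$ (ii). Since $J\in C^1(\mc{H})$ by (H1) together with Theorem~\ref{HauptresultatSection5.5}, $J'(\tilde u)\in\mc{H}^{\ast}$ is a continuous linear functional, so it suffices to prove that the span of $\bigcup_{k\in\Z_{\odd}}\mc{H}_{k,\mono}$ is dense in $\mc{H}$: hypothesis (i) and linearity then force $J'(\tilde u)$ to vanish on that dense subspace, and continuity extends the vanishing to all of $\mc{H}$.

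To establish the density, I will approximate an arbitrary $\tilde v=(v_k)_{k\in\Z_{\odd}}\in\mc{H}$ in two steps. First, truncate in frequency: set $v_k^{(N)}:=v_k$ for $|k|\leq N$ and $v_k^{(N)}:=0$ otherwise, so that
\[
\|\tilde v-\tilde v^{(N)}\|_{\mc{H}}^{2}=\sum_{|k|>N} b_{|L_k|}(v_k,v_k)\longrightarrow 0 \quad \text{as }N\to\infty
\]
by the defining summability of $\mc{H}$. Second, for each fixed $N$ and each $|k|\leq N$, choose $\phi_k^{(n)}\in C_c^\infty(\R)$ with $\phi_k^{(n)}\to v_k$ in $H^1(\R)$, simultaneously taking $\phi_{-k}^{(n)}:=\overline{\phi_k^{(n)}}$ to preserve the reality constraint, and assemble $\tilde v^{(N,n)}$ with these slots and zeros elsewhere. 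This $\tilde v^{(N,n)}$ lies in $\mathrm{span}\bigcup_{|k|\leq N}\mc{H}_{k,\mono}$, and a standard diagonal extraction $\tilde v^{(N_n,n)}\to\tilde v$ in $\mc{H}$ will complete the approximation.

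The crux of the second step is the implication $\phi_k^{(n)}\to v_k$ in $H^1(\R)$ $\Longrightarrow$ $b_{|L_k|}(\phi_k^{(n)}-v_k,\phi_k^{(n)}-v_k)\to 0$, which reduces to the boundedness of the form $b_{|L_k|}$ on $H^1(\R)\times H^1(\R)$. This follows from the identification of the form domain of $|L_k|$ with that of $L_k$, namely $H^1(\R)$: in cases (V2) and (V3) the estimate is immediate from $V\in L^\infty(\R)$, whereas in case (V1) it rests on the trace inequality $|u(2\pi n)|^2\leq C\|u\|_{H^1(2\pi n-\pi,\,2\pi n+\pi)}^{2}$ summed over $n\in\Z$, which I expect to be the only place requiring some real care. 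With density secured, the continuity of $J'(\tilde u)$ yields $J'(\tilde u)[\tilde v]=\lim_{n\to\infty} J'(\tilde u)[\tilde v^{(N_n,n)}]=0$ for every $\tilde v\in\mc{H}$, proving (ii).
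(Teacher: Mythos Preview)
Your density-plus-continuity strategy is exactly the paper's approach: truncate in frequency, note that the truncated pieces lie in (the closure of) finitely many $\mc{H}_{k,\mono}$, and pass to the limit using $J'(\tilde u)\in\mc{H}^\ast$. Your identification of $H^1$-boundedness of $b_{|L_k|}$ (and the trace inequality in case (V1)) as the reason $C_c^\infty$-approximation works is also correct and matches the paper's implicit use of $\overline{\mc{H}}_{k,\mono}=\{H^1(\R)\text{ in slot }k\}$.

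The one point you treat too casually is the fact that $\mc{H}_{k,\mono}\not\subset\mc{H}$: a sequence with only the $k$-th slot nonzero violates the constraint $\bar\phi_k=\phi_{-k}$ unless $\phi=0$. Your parenthetical fix (``read it together with its $-k$-conjugate partner'') changes the test vector and does not literally give statement (i); in the other direction, decomposing your $\tilde v^{(N,n)}\in\mc{H}$ into its mono-modal summands and invoking (i) on each summand already leaves $\mc{H}$. The paper handles this by introducing the ambient space $\mc{H}_0$ (same norm, no conjugation symmetry), observing that the formula for $J'(\tilde u)[\,\cdot\,]$ extends continuously and linearly to $\mc{H}_0\supset\mc{H}_{k,\mono}$, and then for (ii)$\Rightarrow$(i) splitting any $\tilde\phi\in\mc{H}_0$ as $\tilde\phi^a+\tilde\phi^b$ with $\phi_k^a=\tfrac12(\phi_k+\bar\phi_{-k})$, $\phi_k^b=\tfrac12(\phi_k-\bar\phi_{-k})$, so that $\tilde\phi^a,\,i\tilde\phi^b\in\mc{H}$ and $J'(\tilde u)[\tilde\phi]=J'(\tilde u)[\tilde\phi^a]-iJ'(\tilde u)[i\tilde\phi^b]=0$. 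With this extension in place, your linearity-and-density argument goes through verbatim.
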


\begin{Remark} The set $\mc{H}_{k,\mono}$ consists of Fourier-modes where only the frequency $k\omega$ is occupied while all other frequencies $l\omega$ with $l\not=k$ are not occupied. Because of the missing conjugation-symmetry $\mc{H}_{k,\mono}$ is not a subset of $\mc{H}$. Nevertheless, the functionals $J$, $J'$ as well as the map $\mc{S}$ naturally extend as continuous maps to $\overline{\mc{H}}_{k,\mono}$. 
\end{Remark}

We start verifying the assumption $(B_1)$, (i) and (ii) of Theorem~35 in \cite{SW}.

\begin{Lemma} \label{FormelNichtlin1}
The following statements hold true:
\begin{enumerate}
\item $J_1$ is weakly lower semicontinuous, 
\begin{align} \label{BeinsersteVor}
J_1(0)=0 \quad \text{ and } \quad \frac{1}{2}J_1'(\tilde{u})[\tilde{u}]> J_1(\tilde{u})>0 \text{ for } \tilde{u}\neq 0.
\end{align}
\item $\lim_{\tilde{u}\to 0} \frac{J_1'(\tilde{u})}{\Vert\tilde{u}\Vert_{\mc{H}}}=0$ and $\lim_{\tilde{u}\to 0} \frac{J_1(\tilde{u})}{\Vert \tilde{u}\Vert^2_{\mc{H}}}=0$.
\item For a weakly compact set $U\subset \mc{H}\setminus\{0\}$ we have $\lim_{s\to\infty} \frac{J_1(s\tilde{u})}{s^2} = \infty$ uniformly w.r.t. $\tilde{u}\in U$.
\end{enumerate}
\end{Lemma}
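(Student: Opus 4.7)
The three assertions are largely independent and I would treat them in order.

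\emph{Statement (1).} The identity $J_1(0)=0$ is immediate from $F(x,0)=0$. For weak lower semicontinuity I would first show that $F(x,\cdot)$ is convex and non-negative: (H3) together with (H2) forces $s\mapsto f(x,s)$ to be strictly increasing through $0$ (since $f(x,s)/s$ is strictly increasing on $(0,\infty)$ and tends to $0$ as $s\downarrow 0$ by (H2), so $f(x,s)>0$ on $(0,\infty)$, and oddness yields the opposite sign on $(-\infty,0)$), whence $F(x,\cdot)$ is strictly convex with $F(x,0)=0$, hence non-negative. The functional $u\mapsto \tfrac{1}{T}\int_D F(x,u)\,d(x,t)$ is therefore convex and, by Fatou's lemma together with $F\geq 0$, strongly lower semicontinuous on $L^{p+1}(D)$, hence weakly lower semicontinuous by Mazur; pre-composing with the bounded linear operator $\mc{S}\colon \mc{H}\to L^{p+1}(D)$ (Theorem~\ref{HauptresultatSection5.5}, exploiting $p+1<q^\ast$) preserves this. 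The two-sided inequality $0<F(x,s)<\tfrac{1}{2}sf(x,s)$ for $s\neq 0$ then follows from $F(x,s)=\int_0^s f(x,t)\,dt$ and the strict monotonicity of $t\mapsto f(x,t)/t$: for $s>0$ one has $f(x,t)<tf(x,s)/s$ for every $t\in(0,s)$, giving $F(x,s)<\tfrac{1}{2}sf(x,s)$; the case $s<0$ follows by oddness, and positivity of $F$ from the sign of $f$ just derived.

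\emph{Statement (2).} Here a standard subcritical estimate suffices. Combining (H1) and (H2), for every $\varepsilon>0$ there exists $C_\varepsilon>0$ such that $|f(x,s)|\leq \varepsilon|s|+C_\varepsilon|s|^p$ and $|F(x,s)|\leq \tfrac{\varepsilon}{2}s^2+C_\varepsilon|s|^{p+1}$ for all $(x,s)$. Applying the boundedness of $\mc{S}$ at exponents $q=2$ and $q=p+1$, both strictly below $q^\ast$ by $p<p^\ast$, one obtains
\[
|J_1(\tilde u)|\leq \tilde C_1\varepsilon\|\tilde u\|_{\mc H}^2+\tilde C_2\|\tilde u\|_{\mc H}^{p+1},\qquad \|J_1'(\tilde u)\|_{\mc H^\ast}\leq \tilde C_3\varepsilon\|\tilde u\|_{\mc H}+\tilde C_4\|\tilde u\|_{\mc H}^p,
\]
where H\"older with exponents $\tfrac{p+1}{p}$ and $p+1$ was used in the second estimate. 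Dividing by $\|\tilde u\|_{\mc H}^2$ and $\|\tilde u\|_{\mc H}$ respectively and exploiting the arbitrariness of $\varepsilon$ yields both limits.

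\emph{Statement (3).} This is the subtlest item and where I expect the main obstacle. I would argue by contradiction: suppose $s_n\to\infty$ and $\tilde u_n\in U$ satisfy $J_1(s_n\tilde u_n)/s_n^2\leq M$. By weak compactness of $U$, a subsequence satisfies $\tilde u_n\rightharpoonup \tilde u_\infty\in U$, so $\tilde u_\infty\neq 0$; by the injectivity of $\mc{S}$ (Theorem~\ref{HauptresultatSection5.5}) this gives $\mc{S}\tilde u_\infty\not\equiv 0$, so one can choose a compact $K\subset\overline D$ for which $A\coloneqq\{(x,t)\in K: \mc{S}\tilde u_\infty(x,t)\neq 0\}$ has positive measure. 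The local compactness of $\mc{S}$ (again Theorem~\ref{HauptresultatSection5.5}) yields, upon a further extraction, $\mc{S}\tilde u_n\to\mc{S}\tilde u_\infty$ pointwise a.e.\ on $K$, so $|s_n\mc{S}\tilde u_n|\to\infty$ a.e.\ on $A$; by (H4) one concludes $F(x,s_n\mc{S}\tilde u_n)/(s_n\mc{S}\tilde u_n)^2\to\infty$ a.e.\ on $A$, and multiplying by $(\mc{S}\tilde u_n)^2\to(\mc{S}\tilde u_\infty)^2>0$ gives $F(x,s_n\mc{S}\tilde u_n)/s_n^2\to\infty$ a.e.\ on $A$. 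Since $F\geq 0$ by part (1), Fatou's lemma yields
\[
\liminf_n \frac{J_1(s_n\tilde u_n)}{s_n^2}\geq \frac{1}{T}\int_A \liminf_n \frac{F(x,s_n\mc{S}\tilde u_n)}{s_n^2}\,d(x,t)=+\infty,
\]
contradicting the bound $M$. The delicate step is precisely the passage from weak convergence in $\mc{H}$ to pointwise information on $\mc{S}\tilde u_n$ on a set of positive measure, mediated by the injectivity and local compactness of $\mc{S}$ in Theorem~\ref{HauptresultatSection5.5}.
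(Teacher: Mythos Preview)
Your proof is correct and largely follows the same route as the paper. Parts (b) and (c) are essentially identical to the paper's arguments: the subcritical estimate $|f(x,s)|\leq \varepsilon|s|+C_\varepsilon|s|^p$ combined with the embedding of Theorem~\ref{HauptresultatSection5.5} for (b), and for (c) the contradiction argument via weak compactness of $U$, local compactness of $\mc{S}$ to obtain pointwise a.e.\ convergence, and Fatou's lemma applied to $F(x,s_n u_n)/s_n^2$ on the set where the limit is nonzero.

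The one genuine difference is in part (a), in the proof of weak lower semicontinuity of $J_1$. The paper argues directly: a weakly convergent sequence $(\tilde u_n)$ in $\mc{H}$ has $\mc{S}\tilde u_n$ converging strongly in $L^2(K)$ for every compact $K$ (by the compactness part of Theorem~\ref{HauptresultatSection5.5}), hence pointwise a.e.\ along a subsequence, and Fatou's lemma together with $F\geq 0$ finishes. You instead observe that (H2)--(H3) force $f(x,\cdot)$ to be strictly increasing, so $F(x,\cdot)$ is convex; then strong lower semicontinuity plus convexity yields weak lower semicontinuity on $L^{p+1}(D)$ via Mazur, and precomposing with the bounded linear $\mc{S}$ transfers this to $\mc{H}$. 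Your route is slightly more elementary in that it avoids invoking the local compactness of $\mc{S}$ for this part, at the price of extracting and using the convexity of $F$, which the paper never makes explicit. Both arguments are valid; the paper's has the minor advantage of reusing exactly the pointwise-convergence mechanism already needed in part (c).
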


\begin{proof}
(a) Note that (H2) and (H3) imply $f(x,s)s > 2F(x,s)>0$ for all $s\not =0$. Since $\mc{S}\colon\mc{H}\to L^{p+1}(D)$ is one-to-one this implies \eqref{BeinsersteVor}. The weak lower-semicontinuity of $J_1$ follows from Fatou's lemma and the fact that a weakly convergent sequence $(\tilde u_n)_{n\in\N}$ in $\mc{H}$ has the property that $(\mc{S}\tilde u_n)_{n\in\N}$ converges weakly in $L^2(D)$, strongly in $L^2(K)$ for every compact subset $K\subset \overline{D}$ and (for a subsequence) pointwise almost everywhere in $D$.

\smallskip

(b) It follows from (H2) that for every $\epsilon>0$ there is $C_\epsilon>0$ such that $|f(x,s)|\leq \epsilon |s|+ C_\epsilon |s|^p$ and hence $0\leq F(x,s) \leq \frac{\epsilon}{2}s^2 + \frac{C_\epsilon}{p+1}|s|^{p+1}$. The claim is then immediate by the embedding provided by Theorem~\ref{HauptresultatSection5.5}.

\smallskip

(c) Let $U\subset \mc{H}\setminus\{0\}$ be weakly compact. To prove the claim it is sufficient to show that for every sequence $(\tilde u_n)_{n\in \N}$ in $U$ and every sequence $s_n\to \infty$ we have $\liminf_{n\in \N} \frac{J_1(s_n\tilde{u}_n)}{s_n^2} = \infty$. Up to a subsequence we have with $u_n:= \mc{S}\tilde u_n$ that $u_n\to u$ a.e. in $D$ as $n\to \infty$ and $u\not=0$ on a set $A\subset D$ of positive measure. By (H4)  
$$
\lim_{n\to \infty} \frac{F(x, s_n u_n(x,t))}{s_n^2 u_n(x,t)^2} = \infty \mbox{ a.e. on } A 
$$
so that by Fatou's Lemma 
$$
\liminf_{n \in N} \frac{J_1(s_n \tilde{u}_n)}{s_n^2} \geq  \liminf_{n\in N} \int_A \frac{F(x,s_nu_n(x,t))}{s_n^2 u_n(x,t)^2} u_n(x,t)^2\,d(x,t) = \infty.
$$
\end{proof}

Assumption $(B_2)$ of Theorem~35 in \cite{SW} is guaranteed by the next result.

\begin{Lemma} \label{FormelNichtlin2}
The following statements hold true:
\begin{enumerate}
\item For each $\tilde{w}\in\mc{H}\setminus \mc{H}^-$ there exists a unique nontrivial critical point $m_1(\tilde{w})$ of $J|_{\mc{H}(\tilde{w})}$. Moreover, $m_1(\tilde{w})\in \mc{M}$ is the unique global maximizer of $J|_{\mc{H}(\tilde{w})}$ as well as $J(m_1(\tilde{w}))>0$.
\item There exists $\delta>0$ such that $\Vert m_1(\tilde{w})^+\Vert_{\mc{H}} \geq \delta$ for all $\tilde{w}\in \mc{H}\setminus \mc{H}^-$.
\end{enumerate}
\end{Lemma}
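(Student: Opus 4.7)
The plan is to reduce $\tilde w$ to $\tilde e := \tilde w^+ \in \mc{H}^+\setminus\{0\}$, since $\mc{H}(\tilde w) = \R^+\tilde e \oplus \mc{H}^-$, and to parametrise $\mc{H}(\tilde w)$ by $(s, \tilde v)\in \R^+\times\mc{H}^-$ so that
\[
J(s\tilde e + \tilde v) = \tfrac{1}{2}s^2\|\tilde e\|_{\mc{H}}^2 - \tfrac{1}{2}\|\tilde v\|_{\mc{H}}^2 - J_1(s\tilde e + \tilde v).
\]
The first step for (a) is the existence of a strictly positive global maximiser. From (H2) and Theorem~\ref{HauptresultatSection5.5} one obtains a global bound $J_1(\tilde h)\leq \tfrac{1}{4}\|\tilde h\|_{\mc{H}}^2 + C\|\tilde h\|_{\mc{H}}^{p+1}$, which yields $J(s\tilde e) \geq \tfrac{s^2}{4}\|\tilde e\|_{\mc{H}}^2 - C s^{p+1}\|\tilde e\|_{\mc{H}}^{p+1} > 0$ for a range of $s>0$. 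For coercivity at infinity along $\tilde u_n = s_n\tilde e + \tilde v_n$ with $\|\tilde u_n\|_{\mc{H}}\to\infty$, I distinguish three cases: $s_n$ bounded (then $-\tfrac{1}{2}\|\tilde v_n\|_{\mc{H}}^2$ drives $J\to-\infty$); $s_n\to\infty$ with $\|\tilde v_n\|_{\mc{H}}/s_n\to\infty$ (the quadratic part alone $\to -\infty$); and $s_n\to\infty$ with $\|\tilde v_n\|_{\mc{H}}/s_n$ bounded, in which case $\tilde u_n/s_n \in \tilde e + \mc{H}^-$ lies in a bounded set whose weak closure avoids $0$, so Lemma~\ref{FormelNichtlin1}(c) gives $J_1(\tilde u_n)/s_n^2\to\infty$, overwhelming the $O(s_n^2)$ quadratic contribution. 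A maximising sequence is therefore bounded; weak upper-semicontinuity of the indefinite quadratic form together with the weak lower-semicontinuity of $J_1$ (Lemma~\ref{FormelNichtlin1}(a)) deliver a maximiser $m_1(\tilde w)\in\mc{H}(\tilde w)\setminus\mc{H}^-$. Testing $J'(m_1(\tilde w))$ against $\tilde e$ and against arbitrary $\tilde v\in\mc{H}^-$ then places $m_1(\tilde w)\in\mc{M}$.

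The main obstacle is uniqueness. My plan is to derive from (H3) the Szulkin--Weth convexity inequality
\[
F(x,r) \geq F(x,y) + \tfrac{f(x,y)}{2y}\bigl(r^2 - y^2\bigr) \qquad \text{for all } r\in\R,\; y\neq 0,
\]
with strict inequality unless $r = \pm y$; this reduces to a sign analysis of $g(r) := F(x,r) - F(x,y) - \tfrac{f(x,y)}{2y}(r^2-y^2)$, where strict monotonicity of $f(x,s)/|s|$ on each half-axis places the only local minima of $g$ at $r = \pm y$, and evenness of $F$ (from oddness of $f$) secures $g(\pm y) = 0$. Given any $\tilde u\in\mc{M}\cap\mc{H}(\tilde w)$ and any competitor $\sigma\tilde u + \tilde z\in\mc{H}(\tilde w)$ with $\sigma\geq 0$, $\tilde z\in\mc{H}^-$, the Nehari identities $B(\tilde u,\tilde u) = J_1'(\tilde u)[\tilde u]$ and $\langle\tilde u^-,\tilde z\rangle_{\mc{H}} = -J_1'(\tilde u)[\tilde z]$ yield after direct expansion
\[
J(\tilde u) - J(\sigma\tilde u + \tilde z) = \tfrac{1}{2}\|\tilde z\|_{\mc{H}}^2 + \tfrac{1}{T}\int_D\!\Bigl[F(x,\sigma u + z) - F(x,u) - \tfrac{\sigma^2-1}{2}f(x,u)u - \sigma f(x,u)z\Bigr]d(x,t)
\]
with $u := \mc{S}\tilde u$ and $z := \mc{S}\tilde z$. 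Applying the convexity inequality with $r = \sigma u + z$, $y = u$ (and $F(x,z)\geq 0$ where $u = 0$) renders the bracket pointwise non-negative, with equality forcing $\tilde z = 0$ and $\sigma = 1$. Hence $\mc{M}\cap\mc{H}(\tilde w) = \{m_1(\tilde w)\}$, $m_1(\tilde w)$ is the unique global maximiser of $J|_{\mc{H}(\tilde w)}$, and $J(m_1(\tilde w)) > 0$.

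For (b) I extract a universal lower bound: substituting $r = s\|\tilde e'\|_{\mc{H}}$ in $J(s\tilde e')\geq \tfrac{s^2}{4}\|\tilde e'\|_{\mc{H}}^2 - Cs^{p+1}\|\tilde e'\|_{\mc{H}}^{p+1}$ reduces the right side to $\tfrac{r^2}{4} - Cr^{p+1}$, whose maximum $c_0 > 0$ does not depend on $\tilde e'\in\mc{H}^+\setminus\{0\}$. Since $\R^+\cdot m_1(\tilde w)^+\subset\mc{H}(\tilde w)$, the maximiser property gives $J(m_1(\tilde w))\geq \max_{s\geq 0}J(s\, m_1(\tilde w)^+)\geq c_0$, while the trivial bound $J(\tilde u)\leq \tfrac{1}{2}\|\tilde u^+\|_{\mc{H}}^2$ yields $\|m_1(\tilde w)^+\|_{\mc{H}}\geq\sqrt{2c_0} =: \delta$, uniformly in $\tilde w$.
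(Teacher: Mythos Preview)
Your proof is correct and follows essentially the same route as the paper. For part (a) the paper simply cites Proposition~39 in Szulkin--Weth \cite{SW}; you have reproduced that argument in full (existence of a maximiser via coercivity on $\R^+\tilde e\oplus\mc{H}^-$ and weak upper-semicontinuity of $J$ there, uniqueness via the pointwise inequality $F(x,r)\geq F(x,y)+\tfrac{f(x,y)}{2y}(r^2-y^2)$ drawn from (H3)). For part (b) your argument coincides with the paper's: both combine a uniform lower bound $\max_{s\geq 0}J(s\,\tilde w^+/\|\tilde w^+\|_{\mc{H}})\geq\eta>0$, obtained from Lemma~\ref{FormelNichtlin1}(b), with the trivial estimate $J(m_1(\tilde w))\leq\tfrac{1}{2}\|m_1(\tilde w)^+\|_{\mc{H}}^2$.
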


\begin{proof}
(a) We can directly follow the lines of proof of Proposition~39 in \cite{SW}.

\smallskip

(b) First, consider $\tilde{v}\in\mc{H}^+$. Then we have 
$
\lim_{\tilde{v}\to 0} \frac{J(\tilde{v})}{\Vert\tilde{v}\Vert^2_{\mc{H}}} =\frac{1}{2}
$
due to Lemma~\ref{FormelNichtlin1}~(b). Thus there is $\rho_0>0$ s.t. $J(\tilde{v})\geq \frac{1}{4}\Vert\tilde{v}\Vert^2_{\mc{H}}$ for all $\tilde{v}\in\mc{H}^+$ with $\Vert\tilde{v}\Vert_{\mc{H}} \leq \rho_0$. Hence for $\rho \in (0,\rho_0)$ we find $\eta=\frac{\rho^2}{4}$ with $J(\tilde{v})\geq \eta$ for all $\tilde{v}\in\mc{H}^+$ with $\Vert\tilde{v}\Vert_{\mc{H}}=\rho$. Now, let $\tilde{w}\in \mc{H}\setminus\mc{H}^-$. Due to the structure of $J$ we infer that 
\begin{align} \label{tildewteileins}
\frac{\Vert m_1(\tilde{w})^+\Vert^2_{\mc{H}}}{2} \geq J(m_1(\tilde{w})).
\end{align} 
Since $m_1(\tilde{w})$ is the maximizer of $J|_{\mc{H}(\tilde{w})}$ we conclude
\begin{align} \label{tildewteilzwei}
J(m_1(\tilde{w}))\geq J\paren{\rho \frac{\tilde{w}^+}{\Vert \tilde{w}^+\Vert_{\mc{H}}}} \geq \eta.
\end{align}
and the combination of \eqref{tildewteileins} and \eqref{tildewteilzwei} finishes the proof of part (b).
\end{proof}

\begin{Lemma} \label{NichtLin5}
Any Palais-Smale sequence $\paren{\tilde{u}_n}_{n\in\N}$ of $J|_{\mc{M}}$ is bounded.
\end{Lemma}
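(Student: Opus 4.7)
The plan is to argue by contradiction: suppose $\Vert\tilde{u}_n\Vert_{\mc{H}} \to \infty$ along a subsequence, and renormalize $\tilde{v}_n := \tilde{u}_n / \Vert\tilde{u}_n\Vert_{\mc{H}}$. Passing to a further subsequence, $\tilde{v}_n \rightharpoonup \tilde{v}$ in $\mc{H}$; by Theorem~\ref{HauptresultatSection5.5} we obtain $\mc{S}\tilde{v}_n \to \mc{S}\tilde{v}$ pointwise a.e.\ on $D$ and in $L^q_{\loc}(\overline D)$ for $q\in [2, q^\ast)$. I would exploit two consequences of $\tilde{u}_n \in \mc{M}$. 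First, $J'(\tilde u_n)[\tilde u_n]=0$ combined with $f(x,s)s\geq 0$ (from (H2)--(H3)) gives
\begin{equation*}
\Vert \tilde{u}_n^+\Vert_{\mc{H}}^2 - \Vert \tilde{u}_n^-\Vert_{\mc{H}}^2 \;=\; \frac{1}{T}\int_D f(x, \mc{S}\tilde{u}_n)\,\mc{S}\tilde{u}_n\,d(x,t) \;\geq\; 0,
\end{equation*}
so that $\Vert\tilde{v}_n^+\Vert_{\mc{H}}^2 \geq 1/2$; second, by Lemma~\ref{FormelNichtlin2}(a) the vector $\tilde{u}_n = m_1(\tilde{u}_n)$ is the global maximizer of $J$ on $\mc{H}(\tilde{u}_n) = \R^+ \tilde{u}_n^+ \oplus \mc{H}^-$.

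\smallskip

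Using the $2\pi$-periodicity of $V$, $q$ and $f$, I would distinguish two sub-cases. \emph{Concentration:} there exist $y_n \in 2\pi\Z$ so that the translates $\tilde{u}_n(\cdot - y_n)\in \mc{M}$ (same $\mc{H}$-norm and $J$-value) have a normalization with non-trivial weak limit $\tilde{v}\neq 0$. Since $\mc{S}$ is injective, $\mc{S}\tilde{v}$ does not vanish on a measurable set $\Omega\subset D$ of positive measure, and on $\Omega$ we have $|\mc{S}\tilde u_n|=\Vert \tilde u_n\Vert_{\mc{H}}\,|\mc{S}\tilde v_n|\to\infty$ a.e. Assumption (H4) and Fatou's lemma then force
\begin{equation*}
\frac{J_1(\tilde{u}_n)}{\Vert\tilde{u}_n\Vert_{\mc{H}}^2} \;=\; \frac{1}{T}\int_D \frac{F(x, \mc{S}\tilde{u}_n)}{(\mc{S}\tilde{u}_n)^2}\,|\mc{S}\tilde{v}_n|^2\,d(x,t) \;\longrightarrow\; \infty,
\end{equation*}
which contradicts the bound $J_1(\tilde u_n)/\Vert\tilde u_n\Vert_{\mc{H}}^2 \leq 1/2 + o(1)$ coming from the boundedness of $J(\tilde u_n)$ and the decomposition $J(\tilde{u}_n) = \tfrac{1}{2}(\Vert\tilde{u}_n^+\Vert_{\mc{H}}^2 - \Vert\tilde{u}_n^-\Vert_{\mc{H}}^2) - J_1(\tilde{u}_n)$.

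\smallskip

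\emph{Vanishing:} in the opposite case I would invoke the concentration-compactness lemma announced for the appendix to conclude $\mc{S}\tilde v_n \to 0$ in $L^{p+1}(D)$; this exponent is admissible because $p+1 < p^\ast+1 \leq q^\ast$ in each of (V1)--(V3), and the same conclusion transfers to $\mc{S}\tilde v_n^+$ by boundedness of $\mc{P}^+$ on $\mc{H}$ and of $\mc{S}$. For any fixed $R>0$ the vector $R\tilde v_n^+ = (R/\Vert \tilde u_n\Vert_{\mc{H}})\tilde u_n^+$ is a nonnegative scalar multiple of $\tilde u_n^+$ (which is nonzero by Lemma~\ref{FormelNichtlin2}(b)), hence lies in $\mc{H}(\tilde u_n)$. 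The maximum property together with the growth bound $F(x,s)\leq \varepsilon s^2+C_\varepsilon|s|^{p+1}$ (from (H1)--(H2)) then yields
\begin{equation*}
J(\tilde u_n) \,\geq\, J(R\tilde v_n^+) \,\geq\, \frac{R^2}{2}\Vert\tilde v_n^+\Vert_{\mc{H}}^2 - \varepsilon R^2\,\frac{\Vert\mc{S}\tilde v_n^+\Vert_{L^2(D)}^2}{T} - C_\varepsilon R^{p+1}\,\frac{\Vert\mc{S}\tilde v_n^+\Vert_{L^{p+1}(D)}^{p+1}}{T}.
\end{equation*}
With $\Vert\tilde v_n^+\Vert_{\mc{H}}^2 \geq 1/2$, $\Vert\mc{S}\tilde v_n^+\Vert_{L^2(D)}$ bounded, and $\Vert\mc{S}\tilde v_n^+\Vert_{L^{p+1}(D)}\to 0$, one first fixes $\varepsilon$ small enough (independently of $n,R$), then takes $n\to\infty$, and finally $R\to\infty$, again contradicting $J(\tilde u_n)\leq C$.

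\smallskip

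The main obstacle I anticipate is the vanishing case: the concentration-compactness step must operate in the anisotropic embedding $\mc{S}:\mc{H}\hookrightarrow L^q(D)$ of Theorem~\ref{HauptresultatSection5.5} (cf.\ Remark~\ref{crit_sob}), where the underlying fractional Sobolev structure treats $x$ and $t$ on unequal footing, especially in case (V1). Specifically, one has to show that absence of $L^2$-concentration on shifted windows $(y, y+2\pi)\times (0,T)$, $y\in \R$, forces $L^{p+1}$-convergence of $\mc{S}\tilde v_n$ (and $\mc{S}\tilde v_n^+$) to zero throughout the admissible range $p+1\in (2, q^\ast)$; this is precisely the role the paper assigns to the appendix lemma.
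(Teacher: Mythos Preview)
Your overall strategy coincides with the paper's: argue by contradiction, normalize, and treat a concentration case via (H4) and Fatou, and a vanishing case via the maximizer property from Lemma~\ref{FormelNichtlin2}(a). There is, however, a gap in your vanishing step. From Lemma~\ref{ConccompLemma} you obtain $\mc{S}\tilde v_n \to 0$ in $L^{p+1}(D)$ and then assert that ``the same conclusion transfers to $\mc{S}\tilde v_n^+$ by boundedness of $\mc{P}^+$ on $\mc{H}$ and of $\mc{S}$''. This does not follow: boundedness of $\mc{P}^+$ on $\mc{H}$ would only give $\tilde v_n^+\to 0$ in $\mc{H}$ from $\tilde v_n\to 0$ in $\mc{H}$, but here $\|\tilde v_n\|_{\mc{H}}=1$. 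The spectral projections $P^{+,k}$ are non-local in $x$, and there is no reason for them to act boundedly on $L^{p+1}(\R)$ uniformly in $k$; hence $\mc{S}\tilde v_n\to 0$ in $L^{p+1}$ does not by itself force $\mc{S}\tilde v_n^+\to 0$ in $L^{p+1}$.

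The paper circumvents this by running the concentration--compactness dichotomy directly on $(\tilde v_n^+)_{n\in\N}$, which is itself bounded in $\mc{H}$. If $\mc{S}\tilde v_n^+\to 0$ in $L^{p+1}(D)$, your maximizer argument yields the contradiction. Otherwise, the contrapositive of Lemma~\ref{ConccompLemma} provides points $y_n$ with $\int_{B_1(y_n)}|\mc{S}\tilde v_n^+|^2\geq\delta>0$; after shifting by the appropriate $2\pi m_n$ (shifts commute with $\mc{P}^\pm$) one finds that the shifted normalized sequence $\tilde v_n^\ast$ has a weak limit $\tilde v^\ast$ with $\tilde v^{\ast,+}\neq 0$, hence $\tilde v^\ast\neq 0$, and your Fatou/(H4) argument then applies verbatim to the shifted Palais--Smale sequence $\tilde u_n(\cdot+2\pi m_n)$.
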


\begin{proof} The following proof is similar to the proof of Theorem~40 in \cite{SW}. 

\smallskip

\noindent
{\em Step 1:} Suppose for contradiction that $(\tilde u_n)_{n\in \N}$ is an unbounded Palais-Smale sequence for $J$. By selecting a subsequence we may assume that $\|\tilde u_n\|_{\mc{H}}\to \infty$ and that $\tilde v_n := \tilde u_n/\|\tilde u_n\|_{\mc{H}}$ has the property that $\tilde v_n\rightharpoonup \tilde v$ as $n\to \infty$. Note that 
\begin{equation} \label{rescale}
0 \leq \frac{J(\tilde u_n)}{\|\tilde u_n\|_{\mc{H}}^2} = \frac{1}{2} \|\tilde v_n^+\|_{\mc{H}}^2 - \frac{1}{2}\|\tilde v_n^-\|_{\mc{H}}^2 - \frac{J_1(\|\tilde u_n\|_{\mc{H}} \tilde v_n)}{\|\tilde u_n\|_{\mc{H}}^2}.
\end{equation}
If $\tilde v\not =0$ then we can apply Lemma~\ref{FormelNichtlin1}(c) to the weakly compact set $U=\{\tilde v_n: n\in \N\}\cup{\tilde v}$ which does not contain $0$ and find that the expression $\frac{J_1(\|\tilde u_n\|_{\mc{H}} \tilde v_n)}{\|\tilde u_n\|_{\mc{H}}^2} \to \infty$ as $n\to \infty$. This is not compatible with \eqref{rescale} and hence the weak limit $\tilde v=0$. 

\smallskip

\noindent
{\em Step 2:} Next, let us show that $\mc{S}\tilde v_n^+ \to 0$ in $L^{p+1}(D)$ is impossible.
Since $J_1\geq 0$ we conclude from \eqref{rescale} that $\|\tilde v_n^-\|_{\mc{H}}^2 \leq \|\tilde v_n^+\|_{\mc{H}}^2$ which together with $\|\tilde v_n^-\|_{\mc{H}}^2 + \|\tilde v_n^+\|_{\mc{H}}^2=1$ implies that $\|\tilde v_n^+\|_{\mc{H}}^2\geq 1/2$.  Next, note by (H1) and (H2) that for every $\epsilon>0$ there exists $C_\epsilon>0$ such that $|F(x,s)| \leq \epsilon s^2 +C_\epsilon |s|^{p+1}$ so that for every $\tilde u\in \mc{H}$ one has $0\leq J_1(\tilde u) \leq \epsilon C \|\tilde u\|_{\mc{H}}^2 + \bar C_\epsilon \|\mc{S}\tilde u\|_{L^{p+1}(D)}^{p+1}$ by Theorem~\ref{HauptresultatSection5.5}. Since $\tilde v_n$ is a positive multiple of $\tilde u_n$ (which itself belongs to $\mc{M}$) Lemma~\ref{FormelNichtlin2}(a) together with the preceeding inequality for $J_1$ and $\|\tilde v_n^+\|_{\mc{H}}^2\geq 1/2$ imply that for any $s>0$
\begin{equation} \label{unmoeglich}
J(\tilde u_n) \geq J(s \tilde v_n^+) = \frac{s^2}{2} \|\tilde v_n^+\|^2_{\mc{H}}- J_1(s \tilde v_n^+) \geq \frac{s^2}{4} -\epsilon C s^2\|\tilde v_n^+\|_{\mc{H}}^2 - \bar C_\epsilon |s|^{p+1}\|\mc{S}\tilde v_n^+\|_{L^{p+1}(D)}^{p+1}.
\end{equation}
The left hand side is bounded sind $(\tilde u_n)_{n\in \N}$ is a Palais-Smale sequence, and $\|v_n^+\|_{\mc H}$ is also bounded by weak convergence. Thus, choosing $\epsilon>0$ small enough but $s>0$ large, we cannot have $\|\mc{S}\tilde v_n^+\|_{L^{p+1}(D)}^{p+1}\to 0$ as $n\to \infty$ in \eqref{unmoeglich}. 

\smallskip
\noindent
{\em Step 3:} Shifting $\tilde v_n^+$. By Step 2, i.e., $\mc{S} \tilde v_n^+$ not converging to $0$ in $L^{p+1}(D)$, Lemma~\ref{ConccompLemma} applies and we find $\delta>0$, a sequence $(y_n)_{n\in\N}$ in $D$ and a subsequence of $(\tilde{v}_n)_{n\in\N}$ (again denoted by $(\tilde{v}_n)_{n\in\N}$) such that
\begin{align} \label{Beinsyn}
\int_{B_1(y_n)} \vert\mc{S}\tilde{v}_n^+\vert^2 d(x,t) \geq \delta>0 \text{ for all } n\in\N.
\end{align}
Next we shift $\tilde{v}_n^+$ in such a way that we can make use of compact embeddings for the shifted sequence. For the centers $y_n=(x_n,t_n)^T$ of the balls appearing in \eqref{Beinsyn} we have $x_n=2\pi m_n+r_n$ for some $m_n\in\Z, r_n\in [0,2\pi)$. The shifted centers are denoted by $y_n'\coloneqq (r_n,t_n)^T \in [0,2\pi)\times [0,T)$. Let us define new functions $\tilde v_n^\ast$ by 
\begin{align*}
\tilde{v}_n^{\ast}(\cdot) \coloneqq \tilde{v}_n(\cdot +2\pi m_n).
\end{align*}
Note that shifting does not change norms in $\mc{H}$ and shifting commutes with the spectral projections $\mc{P}^{\pm}$ since the operators $L_k$ are shift invariant, i.e.,$\tilde v_n^{\ast,+}=\tilde v_n^{+,\ast}$. If we set $\tilde{B}\coloneqq [-1,2\pi+1]\times [-1,T+1]$ then $B_1(y_n')\subset \tilde B$ for all $n\in\N$. Moreover, \eqref{Beinsyn} entails
\begin{align*}
\int_{\tilde{B}} \vert\mc{S}\tilde{v}_n^{\ast,+}\vert^2 d(x,t) \geq \int_{B_1(y_n')} \vert\mc{S}\tilde{v}_n^{\ast,+}\vert^2 d(x,t) = \int_{B_1(y_n)} \vert\mc{S}\tilde{v}_n^+\vert^2 d(x,t)\geq \delta \text{ for all } n\in\N.
\end{align*}
We know that (up to a subsequence) $\tilde v_n^{\ast}\rightharpoonup \tilde v^{\ast}\in \mc{H}$ as $n\to \infty$. The compact embedding into $L^2(\tilde B)$ from Theorem~\ref{HauptresultatSection5.5} yields $\Vert \mc{S}\tilde v^{\ast,+}\Vert_{L^2(D)}\neq 0$, i.e., $\tilde v^{\ast,+}\not =0$ and hence $\tilde v^{\ast} \not=0$. This, however, contradicts the observation $\tilde v^{\ast}= w\mbox{-}\lim_{n\to \infty} \tilde v_n^\ast=0$ from the beginning of the proof. This contradiction finishes the proof of the boundedness of Palais-Smale sequences of $J|_{\mc{M}}$. 
\end{proof}

Finally, we can turn to our overall goal of this section and verify the following statement.

\begin{Theorem} \label{MinThm}
The functional $J$ admits a ground state, i.e., there exists $\tilde{u}\in \mc{M}$ such that $J'(\tilde u)=0$ and \mbox{$J(\tilde{u})=\inf_{\tilde{v}\in\mc{M}} J(\tilde{v})$.} 
\end{Theorem}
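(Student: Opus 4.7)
The plan is to apply Theorem~35 in \cite{SW} to obtain a minimizing Palais-Smale sequence for $J|_{\mc{M}}$ at the level $c:=\inf_{\mc{M}} J$, and then recover a nontrivial critical point via a concentration-compactness and shift argument exploiting the $2\pi$-periodicity of $V$, $q$ and of $x\mapsto f(x,s)$.

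\textbf{Step 1 (abstract machinery).} Assumption $(B_1)$ of \cite[Thm.~35]{SW} is exactly Lemma~\ref{FormelNichtlin1} and assumption $(B_2)$ is Lemma~\ref{FormelNichtlin2}. The abstract theorem produces a sequence $(\tilde u_n)_{n\in\N}\subset \mc{M}$ with $J(\tilde u_n)\to c$ and $J'|_{\mc{M}}(\tilde u_n)\to 0$. Using that $\mc{M}$ is a $C^1$-manifold parametrized by the Nehari map $m_1$, a standard computation gives $J'(\tilde u_n)\to 0$ in $\mc{H}^{\ast}$. By Lemma~\ref{NichtLin5} the sequence is bounded in $\mc{H}$, and by Lemma~\ref{FormelNichtlin2}(b) we have $\|\tilde u_n^+\|_{\mc{H}}\geq\delta>0$, which also yields $c\geq J(m_1(\tilde u_n^+))\geq\eta>0$.

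\textbf{Step 2 (non-vanishing after a shift).} Since $\tilde u_n\in\mc{M}$, the Nehari identity combined with (H3) gives the nonnegative representation
\begin{equation*}
J(\tilde u_n)=J(\tilde u_n)-\tfrac{1}{2}J'(\tilde u_n)[\tilde u_n]=\frac{1}{T}\int_D\Bigl(\tfrac{1}{2}f(x,\mc{S}\tilde u_n)\mc{S}\tilde u_n-F(x,\mc{S}\tilde u_n)\Bigr)d(x,t)\to c>0.
\end{equation*}
In particular $\mc{S}\tilde u_n\not\to 0$ in $L^{p+1}(D)$, and the concentration-compactness Lemma~\ref{ConccompLemma} produces $\delta'>0$ and points $y_n=(x_n,t_n)\in D$ with $\int_{B_1(y_n)}|\mc{S}\tilde u_n|^2\,d(x,t)\geq\delta'$. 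Writing $x_n=2\pi m_n+r_n$ with $m_n\in\Z$, $r_n\in[0,2\pi)$, set $\tilde u_n^\ast(\cdot):=\tilde u_n(\cdot+2\pi m_n)$. Since all coefficients are $2\pi$-periodic in $x$ and the operators $L_k$ commute with translations by $2\pi$, translation preserves $\mc{M}$, the value of $J$ and the spectral projections $\mc{P}^\pm$; hence $\tilde u_n^\ast\in\mc{M}$, $J(\tilde u_n^\ast)\to c$ and $J'(\tilde u_n^\ast)\to 0$. Extracting a subsequence we get $\tilde u_n^\ast\rightharpoonup\tilde u^\ast$ in $\mc{H}$, and the compactness of $\mc{S}$ on the fixed compact set $\tilde B=[-1,2\pi+1]\times[-1,T+1]$ (Theorem~\ref{HauptresultatSection5.5}) upgrades the $L^2$-mass lower bound to $\tilde u^\ast\neq 0$.

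\textbf{Step 3 (criticality and minimality).} To show $J'(\tilde u^\ast)=0$, by Lemma~\ref{ueberpruefung_cp} it suffices to test against $\tilde\phi\in\mc{H}_{k,\mono}$ for arbitrary $k\in\Z_{\odd}$. Such $\tilde\phi$ have $\mc{S}\tilde\phi$ supported in a fixed compact set $K\subset\overline D$, and Theorem~\ref{HauptresultatSection5.5} yields $\mc{S}\tilde u_n^\ast\to\mc{S}\tilde u^\ast$ in $L^q(K)$ for all $q\in[2,q^\ast)$ as well as a.e.\ in $K$. The growth bound (H1) together with Vitali's convergence theorem then pass the nonlinear term to the limit, while the bilinear part $B(\tilde u_n^\ast,\tilde\phi)$ converges by weak convergence in $\mc{H}$; thus $J'(\tilde u^\ast)[\tilde\phi]=0$. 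Hence $\tilde u^\ast\in\mc{M}$ and $J(\tilde u^\ast)\geq c$. For the reverse inequality we use the nonnegative Nehari representation of Step~2 together with Fatou's lemma along the a.e.-convergent subsequence:
\begin{equation*}
J(\tilde u^\ast)=\frac{1}{T}\int_D\Bigl(\tfrac{1}{2}f(x,\mc{S}\tilde u^\ast)\mc{S}\tilde u^\ast-F(x,\mc{S}\tilde u^\ast)\Bigr)d(x,t)\leq\liminf_{n\to\infty}J(\tilde u_n^\ast)=c.
\end{equation*}

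The main obstacle is \textbf{Step 2}: translation invariance on the unbounded spatial domain $\R$ destroys any naive compactness, and without the shift the weak limit could be zero. The rescue mechanism is the combination of the concentration-compactness lemma (to locate mass) with the discrete shift-invariance of $V$, $q$, $f$ (to move this mass into a fixed compact window where the $\mc{S}$-compactness of Theorem~\ref{HauptresultatSection5.5} applies); the criticality check in Step~3 then reduces, via Lemma~\ref{ueberpruefung_cp}, to compactly supported test modes for which passage to the weak limit is routine.
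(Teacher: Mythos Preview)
Your argument follows the same strategy as the paper's: invoke Theorem~35 of \cite{SW} to obtain a bounded minimizing Palais--Smale sequence, use Lemma~\ref{ConccompLemma} to locate mass, shift by $2\pi$-integers to trap the mass in a fixed compact window, pass to a nontrivial weak limit via the local compactness of $\mc{S}$, verify criticality through Lemma~\ref{ueberpruefung_cp}, and conclude minimality by Fatou applied to the nonnegative integrand $\frac{1}{2}f(x,s)s-F(x,s)$.

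There is one small gap. In Step~3 you write ``Hence $\tilde u^\ast\in\mc{M}$'', but from $J'(\tilde u^\ast)=0$ and $\tilde u^\ast\neq 0$ you still need $\tilde u^\ast\notin\mc{H}^-$, i.e.\ $(\tilde u^\ast)^+\neq 0$. The paper fills this by testing $J'(\tilde u^\ast)=0$ against $\tilde u^\ast$: if $(\tilde u^\ast)^+=0$ then
\[
-\Vert(\tilde u^\ast)^-\Vert_{\mc{H}}^2=\frac{1}{T}\int_D f(x,\mc{S}\tilde u^\ast)\,\mc{S}\tilde u^\ast\,d(x,t)\geq 0,
\]
forcing $\tilde u^\ast=0$, a contradiction. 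The paper actually sidesteps this extra line by applying the concentration-compactness lemma to $\mc{S}\tilde u_n^+$ rather than to $\mc{S}\tilde u_n$, so that after shifting the weak limit has nonzero $\mc{H}^+$-component directly. (Also, your inequality $c\geq J(m_1(\tilde u_n^+))$ in Step~1 is written backwards; what you need and have is $J(\tilde u_n)=J(m_1(\tilde u_n))\geq\eta$ for every $n$, whence $c=\inf_{\mc{M}}J\geq\eta>0$.)
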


The proof requires the following variant of a concentration-compactness Lemma of P.~L.~Lions, cf. Lemma~1.21 in \cite{Willem} for a similar result in non-fractional Sobolev-spaces. Its proof is given in the Appendix. Recall that we interpret $\tilde{u}\in \mc{H}$ as a function on $D$ which is continued to $\R^2$ periodically w.r.t. the second component. This is needed since in the following lemma the balls $B_r(y)$ can exceed the set $D$.

\begin{Lemma} \label{ConccompLemma}
Let $q\in [2,p^\ast+1)$ and $r>0$ be given with $p^\ast$ from Theorem~\ref{Hauptresultat}. Moreover, let $(\tilde{u}_n)_{n\in\N}$ be a bounded sequence in $\mc{H}$ and 
\begin{align} \label{Bedconccomp}
\sup_{z\in D} \int_{B_r(z)} \vert \mc{S}\tilde{u}_n\vert^q d(x,t) \to 0 \text{ as } n\to\infty.
\end{align}  
Then $\mc{S}\tilde{u}_n\to 0$ in $L^{\tilde{q}}(D)$ as $n\to\infty$ for all $\tilde{q}\in (2,p^\ast+1)$.
\end{Lemma}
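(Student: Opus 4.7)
The argument follows the standard blueprint of P.\,L.~Lions' vanishing lemma (cf.\ Lemma~1.21 in \cite{Willem}), adapted to the Hilbert space $\mc H$ and to the image spaces of the map $\mc S$ provided by Theorem~\ref{HauptresultatSection5.5}. Write $u_n\coloneqq \mc{S}\tilde u_n$. Since $q\geq 2$, H\"older's inequality on each ball immediately reduces the hypothesis \eqref{Bedconccomp} to the $L^2$-sup condition
\begin{equation*}
M_n\coloneqq \sup_{z\in D}\int_{B_r(z)}|u_n|^2\,d(x,t)\ \leq\ |B_r|^{1-2/q}\Bigl(\sup_{z\in D}\int_{B_r(z)}|u_n|^q\,d(x,t)\Bigr)^{2/q}\ \longrightarrow\ 0
\end{equation*}
as $n\to\infty$, and from this point on the original exponent $q$ plays no role.

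Given $\tilde q\in (2,p^\ast+1)$, the next step is to choose an auxiliary exponent $\bar q\in (\tilde q,p^\ast+1)$ close enough to $\tilde q$ that the interpolation parameter $\theta\in (0,1)$ defined by $1/\tilde q=\theta/2+(1-\theta)/\bar q$ satisfies $(1-\theta)\tilde q\geq 2$. Such $\bar q$ exists because $\theta\to 0$ and $(1-\theta)\tilde q\to\tilde q>2$ as $\bar q\to\tilde q^+$. Standard $L^p$-interpolation on every ball then gives
\begin{equation*}
\|u_n\|_{L^{\tilde q}(B_r(z))}^{\tilde q}\ \leq\ \|u_n\|_{L^2(B_r(z))}^{\theta\tilde q}\,\|u_n\|_{L^{\bar q}(B_r(z))}^{(1-\theta)\tilde q}\ \leq\ M_n^{\theta\tilde q/2}\,\|u_n\|_{L^{\bar q}(B_r(z))}^{(1-\theta)\tilde q}.
\end{equation*}
Cover $D$ (in its periodic extension to $\R^2$) by balls $(B_r(z_i))_{i\in\N}$ with a bounded overlap constant $N=N(r)$, sum over $i$, and invoke the elementary $\ell^1\subset\ell^c$ inclusion $\sum_i a_i^c\leq (\sum_i a_i)^c$ for $a_i\geq 0$ and $c\geq 1$, applied with $c=(1-\theta)\tilde q/2\geq 1$, to obtain
\begin{equation*}
\|u_n\|_{L^{\tilde q}(D)}^{\tilde q}\ \leq\ M_n^{\theta\tilde q/2}\,\Bigl(\sum_{i\in\N}\|u_n\|_{L^{\bar q}(B_r(z_i))}^2\Bigr)^{(1-\theta)\tilde q/2}.
\end{equation*}

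The remaining task, and the step I expect to be the main obstacle, is to bound the series $\sum_i\|u_n\|_{L^{\bar q}(B_r(z_i))}^2$ uniformly in $n$ by a constant multiple of $\|\tilde u_n\|_{\mc H}^2$. In the cases (V2) and (V3) this is immediate from Remark~\ref{crit_sob}: the embedding $\mc{H}\hookrightarrow H^{\gamma/2}(D)$, combined with the standard fractional Sobolev embedding $H^{\gamma/2}(B_r)\hookrightarrow L^{\bar q}(B_r)$ (valid since $\bar q<p^\ast+1=4/(2-\gamma)$) and with the bounded-overlap subadditivity of the squared $H^{\gamma/2}$-seminorm, yields the required estimate. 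In case (V1) the corresponding local bound has to be extracted from a careful localization of the proof of Theorem~\ref{HauptresultatSection5.5} in Section~\ref{sec:proof_for_S}, because the intermediate space that underlies $\mc H$ is anisotropic in $(x,t)$ (as reflected already in Corollary~\ref{CorzuHauptresultat}); this is the technically most delicate point of the proof. Once the uniform bound $\sum_i\|u_n\|_{L^{\bar q}(B_r(z_i))}^2\leq C\|\tilde u_n\|_{\mc H}^2$ is in place, combining it with the preceding display and $M_n\to 0$ yields $\|u_n\|_{L^{\tilde q}(D)}\to 0$, which is the claim.
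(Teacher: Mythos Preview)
Your approach is essentially the same as the paper's: a bounded-overlap covering of $D$, local interpolation on each ball, and the key ``localized $\ell^2$'' estimate
\[
\sum_{i}\|\mc{S}\tilde u\|_{L^{\bar q}(B_r(z_i))}^{2}\ \leq\ C\,\|\tilde u\|_{\mc H}^{2},
\]
which the paper isolates as a separate lemma (Lemma~\ref{FolgeLvierdomainbeschr}). Two remarks on the minor differences. First, in the interpolation step the paper does not reduce to $q=2$ but interpolates directly between $L^q$ and $L^{\bar q}$ at the specific intermediate exponent $s=2+q(\bar q-2)/\bar q$, chosen so that the $L^{\bar q}$-power on each ball is exactly $2$; this makes the summation immediate and avoids your $\ell^1\subset\ell^c$ trick and the constraint $(1-\theta)\tilde q\ge 2$. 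A final H\"older interpolation between $L^2$ and $L^s$ then recovers all $\tilde q\in(2,p^\ast+1)$.

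Second, for the key estimate itself the paper's argument is more unified than your case-by-case sketch: it localizes with smooth $x$-only cutoffs $\phi_l$ in the intermediate space $\hat H=H^{\delta/2}(0,T;H^1(\R))\cap H^{\gamma/2}(0,T;L^2(\R))$ from Section~\ref{sec:proof_for_S}, and uses only the product rule together with $\|\tilde u\phi_l\|_{\hat H}^2$ being controlled by $\hat H$-norms on the strip $(x_l-2r,x_l+2r)\times(0,T)$. Summing over $l$ then costs only a bounded-overlap factor. Because the cutoff acts only in $x$, no fractional Sobolev subadditivity is needed and the argument covers (V1), (V2), (V3) simultaneously; your anticipated difficulty in (V1) does not arise.
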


\noindent
\textit{Proof of Theorem~\ref{MinThm}:} Conditions (B1), (B2) and (i) and (ii) of Theorem~35 in \cite{SW} are fulfilled, and only (iii) does not hold so that $J$ does not satisfy the Palais-Smale condition. As a consequence, Theorem~35 in \cite{SW} only provides a minimizing Palais-Smale $(\tilde{u}_n)_{n\in \N}$ in $\mc{M}$ with 
$J'(\tilde{u}_n)\to 0$ as $n\to\infty$. Lemma~\ref{NichtLin5} guarantees that $(\tilde{u}_n)_{n\in\N}$ is bounded. Thus, there is $\tilde{u}\in\mc{H}$ such that $\tilde{u}_{n_m}\rightharpoonup \tilde{u}$ as $m\to\infty$. We now proceed in three steps: 

\smallskip

First claim: $J'(\tilde{u})=0$. Let $k\in\Z_{\odd}$. By Lemma~\ref{ueberpruefung_cp} it is enough to check $J'(\tilde u)[\tilde v]=0$ for $\tilde v=(v_k\delta_{lk})_{l\in Z_{\odd}}\in \mc{H}_{k,\mono}$ where $v_k\in C_c^\infty(\R)$ by definition of $\mc{H}_{k,\mono}$. For such $\tilde v$ we conclude first by weak convergence that 
$$
J_0'(\tilde u_n)[\tilde v]= B(\tilde u_n,\tilde v) \to B(\tilde u,\tilde v)=J_0'(\tilde u)[\tilde v] \mbox{ as } n\to \infty.
$$
Next, due to the compact support property of $\mc{S}(\tilde v)(x,t)= v_k(x)e^{ik\omega t}$ and the compact embedding $\mc{S}:\mc{H}\hookrightarrow L^{p+1}(K)$, $1<p<p^\ast$ for any compact subset $K\subset\R^2$, cf. Lemma~\ref{HauptresultatSection5.5}, we obtain
\begin{align*}
J_1'(\tilde{u}_n)[\tilde{v}]&= \frac{1}{T} \int_D \vert\mc{S}\tilde{u}_n\vert^{p-1} \mc{S}\tilde{u}_n \mc{S}\tilde{v} d(x,t) \to J_1'(\tilde{u})[\tilde{v}] \mbox{ as } n\to \infty.
\end{align*}
Combining the two convergence results we deduce $J'(\tilde{u})=0$. Note that this chain of arguments only uses that $(\tilde u_n)_{n\in\N}$ is a Palais-Smale sequence for $J$ and not $\tilde u_n\in \mc{M}$.

\smallskip

Second claim: Here we show the existence of a new Palais-Smale sequence $(\tilde v_n)_{n\in\N}$ such that $J(\tilde v_n)\to \inf_{\mc{M}}J$ and that its weak limit $\tilde v$ belongs to $\mc{M}$ (we do not claim that $\tilde v_n\in \mc{M}$). For this purpose we can repeat Steps 2 and 3 from the proof of Lemma~\ref{NichtLin5}. First we obtain that $\mc{S}\tilde u_n^+$ does not converge to $0$ in $L^{p+1}(D)$. From this we obtain (via Lemma~\ref{ConccompLemma}) that 
\begin{align} \label{tildeuneqnulleins}
\liminf_{n\to\infty} \sup_{z\in D} \int_{B_1(z)} \vert \mc{S}\tilde{u}_{n}^+\vert^2 d(x,t)>0.
\end{align}
Therefore we find $\delta>0$, a sequence $(y_n)_{n\in\N}$ in $D$ and a subsequence of $(\tilde{u}_n)_{n\in\N}$ (again denoted by $(\tilde{u}_n)_{n\in\N}$) such that 
\begin{align} \label{Beinsyn_again}
\int_{B_1(y_n)} \vert\mc{S}\tilde{u}^+_n\vert^2 d(x,t) \geq \delta>0 \text{ for all } n\in\N.
\end{align}
Having $y_n=(x_n,t_n)^T$ with $x_n=2\pi m_n+r_n$ for some $m_n\in\Z, r_n\in [0,2\pi)$, we set 
\begin{align*}
\tilde{v}_n(\cdot) \coloneqq \tilde{u}_n(\cdot +2\pi m_n).
\end{align*}
and obtain that $(\tilde v_n)_{n\in \N}$ is again a Palais-Smale sequence for $J$ with $\lim_{n\to\infty} J(\tilde v_n)=\inf_{\mc{M}} J$ and (as in Step 3 of Lemma~\ref{NichtLin5}) with $\tilde{B}\coloneqq [-1,2\pi+1]\times [-1,T+1]$ that
\begin{align*}
\int_{\tilde{B}} \vert\mc{S}\tilde{v}_n^+\vert^2 d(x,t) \geq \delta>0 \text{ for all } n\in\N.
\end{align*}
By making us of the compact embedding to $L^2(\tilde B)$ from Theorem~\ref{HauptresultatSection5.5} up to a subsequence we find that $\tilde v_n\rightharpoonup \tilde v\in \mc{H}$ as $n\to \infty$ with $\tilde v\not=0$. The property $J'(\tilde{v})=0$ follows from the first claim. It remains to show $\tilde{v}^+\neq 0$. Assume by contradiction that $\tilde{v}^+=0$, i.e., $\tilde{v}=\tilde{v}^-$. By testing $J'(\tilde{v})=0$ with $\tilde{v}$ we infer
\begin{align*}
- \Vert \tilde{v}^-\Vert_{\mc{H}}^2 = \frac{1}{T}\int_D f(x,\mc{S}\tilde{v})\tilde v\, d(x,t),
\end{align*} 
a contradiction since the two expressions have different signs. Thus, $\tilde{v}\in \mc{M}$.

\smallskip

Third claim: $\tilde{v}$ minimizes $J$ on $\mc{M}$. Since $\tilde v\in \mc{M}$ we obviously have $J(\tilde{v})\geq \inf_{\mc{M}} J$. Since for a suitable subsequence $\mc{S}(\tilde v_n)\to \mc{S}(\tilde v)$ pointwise a.e. on $D$ the reverse inequality follows from $\frac{1}{2}f(x,s)s-F(x,s)\geq 0$ (cf. Lemma~\ref{FormelNichtlin1}(a)) and Fatou's Lemma as follows:
\begin{align*}
\inf_{\mc{M}} J &= \lim_{n\to \infty} J(\tilde v_n)- \frac{1}{2} J'(\tilde v_n)[\tilde v_n] = 
\lim_{n\to \infty} \int_D \frac{1}{2}f(x,\mc{S}(\tilde v_n))\mc{S}(\tilde v_n)-F(x,\mc{S}(\tilde v_n))\,d(x,t) \\
& \geq  \int_D \frac{1}{2} f(x,\mc{S}(\tilde v))-F(x,\mc{S}(\tilde v))\,d(x,t) = J(\tilde v)-\frac{1}{2} J'(\tilde v)\tilde v = J(\tilde v).
\end{align*}
\qed

\begin{Remark} Let us explain how the case of ''$-$'' in \eqref{Einl2scalar}$_\pm$ can be treated. In this case one keeps the functional $J_1$ but replaces $J_0$ by $-J_0$ and flips the spaces $\mc{H}^+$ and $\mc{H}^-$. Since $J_0$ is an indefinite functional this is without relevance for the proof strategy. All proofs of this section can be carried over with no change.
\end{Remark}

%\section{The back-transformation to space and time} \label{Spaceandtime}

It remains to give the proof of Theorem~\ref{Hauptresultat} and Corollary~\ref{CorzuHauptresultat}. We only do the ''$+$''-case.

\smallskip

\noindent
\textit{Proof of Theorem~\ref{Hauptresultat} and Corollary~\ref{CorzuHauptresultat}:} Let $\tilde{u}$ be a ground state of $J$ obtained previously in Theorem~\ref{MinThm}. The property that $u=\mc{S}\tilde u$ is a weak solution of \eqref{Einl2scalar}$_+$ in the sense of Definition~\ref{Defveryweaksol1} follows from Corollary~\ref{CorzuHauptresultat} if we verify that 
\begin{equation}  \label{for_weak_sol}
 \int_D V(x) u \phi_{tt} + u_x \phi_x \, d(x,t) + \int_D q(x) u \phi\,d(x,t) = \int_D f(x,u) \phi\, d(x,t)
\end{equation}
holds for all $\phi$ from Corollary~\ref{CorzuHauptresultat}. By Theorem~\ref{HauptresultatSection5.5} we have the integrability property $u\in L^{p+1}(D)$. The boundedness of the operator $I: \mc{H}\to \hat H$ from Lemma~\ref{embedding}, the statement preceeding this lemma and the values of $\gamma$ from Theorem~\ref{Abschnachschlag} and $\delta$ from Theorem~\ref{ZielAbschnittNonlinearity} imply the regularity statement for $u$ as stated in Corollary~\ref{CorzuHauptresultat}. This implies in particular all integrability and regularity properties required in Definition~\ref{Defveryweaksol1}.

\medskip

In the following we fix a real-valued test function $\phi=\sum_{k\in\Z}\phi_k(x) e^{ik\omega t}$ with finitely many nonzero coefficient functions $\phi_k\in C_c^\infty(\R)$. Using that $\tilde u$ is a critical point of the functional $J$ from Theorem~\ref{MinThm} together with Lemma~\ref{ueberpruefung_cp} we obtain 
\begin{equation} \label{for_weak_sol_fourier}
\sum_{k\in \Z} \int_\R \left(V(x)k^2+q(x)\right) u_k(x)\phi_k(x) + u_k'(x)\phi_k'(x)\,dx = \frac{1}{T}\int_D f(x,u(x,t)) \phi(x,t)\,d(x,t).
\end{equation}
Here we have used for $k$ even that $u_k=0$ and $f(x,u(x,\cdot))_k = \frac{1}{T}\int_0^T f(x,u(x,t)e^{-ik\omega t}\,dt=0$ due to (H3). Notice that \eqref{for_weak_sol_fourier} is just \eqref{for_weak_sol} for our particular test function $\phi$. Here and in the following we understand in case (V1) the integral $\int_\R \delta_{\per}(x) u_k\phi_k \,dx$ as a symbol for $\sum_{n\in\Z} u_k(2\pi n)\phi_k(2\pi n)$.  

\medskip

It remains to show that the assumption of having only finitely many nonzero compactly supported coefficient functions $\phi_k\in C_c^\infty(\R)$ in the definition of $\phi=\sum_{k\in\Z}\phi_k(x) e^{ik\omega t}$ may be relaxed in favor of $\phi\in H^{\tilde \alpha}(0,T;H^1(\R))\cap H^{\tilde\beta}(0,T;L^2(\R))$ with $\tilde\alpha, \tilde\beta>0$ as in Corollary~\ref{CorzuHauptresultat}. The result will follow from the first part of the theorem by letting the summation index in the definition of $\phi$ tend to infinity and using the following estimates explained first in the cases (V2), (V3):
\begin{align}
\left| \int_D f(x,u) \phi\,d(x,t)\right| & \leq C( \|u\|_{L^2(D)}\|\phi\|_{L^2(D)}+ \|u\|_{L^{p+1}(D)}^p \|\phi\|_{L^{p+1}(D)}), \label{ab1}\\
\left| \sum_{k} \int_\R k^2 u_k\phi_k \,dx \right| & \leq \|u\|_{H^\beta(0,T;L^2(\R)} \|\phi\|_{H^{\tilde\beta}(0,T;L^2(\R)}, \label{ab2}\\
\left| \sum_{k} \int_\R u_k'\phi_k' \,dx \right| & \leq \|u\|_{H^\alpha(0,T;H^1(\R)} \|\phi\|_{H^{\tilde\alpha}(0,T;H^1(\R)}. \label{ab3} 
%\left| \sum_{n\in\Z} \langle u(2\pi n,\cdot),v_{tt}(2\pi n,\cdot) \rangle_{H^{-\frac{1}{2}}\times H^{\frac{1}{2}}}\right| & \leq \sum_{n\in\Z} \|%u(2\pi n,\cdot)\|_{H^{-1/2}(0,T)} \|v_{tt}(2\pi n,\cdot)\|_{H^{1/2}(0,T)}.
\end{align}
The first estimate \eqref{ab1} follows from H\"older's inequality since assumptions (H1), (H2) imply the estimate $|f(x,s)|\leq \epsilon |s|+ C_\epsilon |s|^p$. Since $\tilde\alpha\geq \alpha$ and $\tilde\beta\geq \beta$ we get $\phi\in L^{p+1}(D)$. The second estimate \eqref{ab2} is a consequence of the Cauchy-Schwarz inequality and $\tilde\beta \geq 2-\beta$, i.e.,
$$
\left| \sum_{k} \int_\R k^2 u_k\phi_k \,dx \right| \leq \left(\sum_k \|u_k\|_{L^2(\R)}^2 |k|^{2\beta} \right)^{1/2}\left(\sum_k \|\phi_k\|_{L^2(\R)}^2 |k|^{4-2\beta} \right)^{1/2} =\|u\|_{H^\beta(0,T;L^2(\R)} \|\phi\|_{H^{2-\beta}(0,T;L^2(\R)}.
$$
Finally, the third estimate \eqref{ab3} is also a consequence of the Cauchy-Schwarz inequality and $\tilde\alpha\geq -\alpha$, i.e.,
$$
\left| \sum_{k} \int_\R u_k'\phi_k' \,dx \right| \leq \left(\sum_k \|u_k\|_{H^1(\R)}^2 |k|^{2\alpha} \right)^{1/2}\left(\sum_k \|\phi_k\|_{H^1(\R)}^2 |k|^{-2\alpha} \right)^{1/2} =\|u\|_{H^\alpha(0,T;H^1(\R)} \|\phi\|_{H^{-\alpha}(0,T;H^1(\R)}.
$$
In case (V1) only the estimate \eqref{ab2} looks different: here we need to show that 
\begin{equation}
\left| \sum_{k} \int_\R \delta_{\per}(x) k^2 u_k\phi_k \,dx \right|^2 \leq C(\|u\|_{H^\alpha(0,T;H^1(\R))}^2+\|u\|_{H^\beta(0,T;L^2(\R)}^2)(\|\phi\|_{H^{\tilde\alpha}(0,T;H^1(\R))}^2+\|\phi\|_{H^{\tilde\beta}(0,T;L^2(\R)}^2).\label{ab2strich}
\end{equation}
To see this note first that the Cauchy-Schwarz inequality and Lemma~\ref{fundablpktwgegennull} allow to estimate the left-hand side of \eqref{ab2strich} by
\begin{align*}
&\left| \sum_{k} \int_\R \delta_{\per}(x) k^2 u_k\phi_k \,dx \right|^2 \leq \left(\sum_k\sum_n |k|^{2a} |u_k(2\pi n)|^2\right) \left(\sum_k\sum_n |k|^{4-2a} |\phi_k(2\pi n)|^2\right) \\
&\leq \left(\sum_k |k|^{2a} \left(\frac{1}{2\pi}+\frac{1}{2\varepsilon}\right) \|u_k\|_{L^2(\R)}^2 + |k|^{2a}\frac{\varepsilon}{2}\|u_k'\|_{L^2(\R)}^2\right) \left(\sum_k |k|^{4-2a} \left(\frac{1}{2\pi}+\frac{1}{2\tilde \varepsilon}\right) \|\phi_k\|_{L^2(\R)}^2 + |k|^{4-2a}\frac{\tilde\varepsilon}{2}\|\phi_k'\|_{L^2(\R)}^2\right)
\end{align*}
for arbitrary $\varepsilon, \tilde\varepsilon>0$ and $a\in\R$. We choose $\varepsilon = |k|^{-3-2a}$, $\tilde\varepsilon = |k|^{2a-4+2\tilde\alpha}$. Therefore 
\begin{align*}
&\left| \sum_{k} \int_\R \delta_{\per}(x) k^2 u_k\phi_k \,dx \right|^2 \\
& \leq C\left(\sum_k (|k|^{2a}+|k|^{4a+3}) \|u_k\|_{L^2(\R)}^2 + |k|^{-3}\|u_k'\|_{L^2(\R)}^2\right) \left(\sum_k (|k|^{4-2a}+|k|^{8-4a-2\tilde\alpha}) \|\phi_k\|_{L^2(\R)}^2 + |k|^{2\tilde\alpha}\|\phi_k'\|_{L^2(\R)}^2\right)
\end{align*}
Considering the regularity of $u$ it turns out that the optimal value is $a=-1/2$. The assumptions $4-2a=5\leq 2\tilde\beta$ and $8-4a-2\tilde\alpha=10-2\tilde\alpha\leq 2\tilde\beta$ imply that the right hand side is controlled as claimed in \eqref{ab2strich}. Since \eqref{ab3} remains the same, we see that also $\tilde\alpha\geq -\alpha=3/2$ is needed. This finishes the proof of case (V1). \qed

\section{Proof of boundedness of $\mc{S}$} \label{sec:proof_for_S}
We split the proof of Theorem~\ref{HauptresultatSection5.5} into several steps and make use of the following intermediate space. Let 
\begin{align*}
\hat{H} &\coloneqq \Big\{\tilde u=(u_k)_{k\in\Z_{\odd}}: u_k \in H^1(\R) \text{ for all } k\in\Z_{\odd} \text{ s.t. } \Vert (u_k)_{k\in\Z_{\odd}}\Vert_{\hat{H}}<\infty \Big\},  \\
\Vert \tilde u\Vert_{\hat{H}}^2 &\coloneqq \sum_{k\in\Z_{\odd}} \paren{\abs{k}^\gamma \Vert u_k\Vert^2_{L^2(\R)}+ |k|^\delta\Vert u_k'\Vert^2_{L^2(\R)}},
\end{align*}
with $\gamma$ as in Theorem~\ref{Abschnachschlag} and $\delta$ as in Theorem~\ref{ZielAbschnittNonlinearity}. Note that $\hat{H}$ is isometrically isomorphic to $H^{\delta/2}(0,T; H^1(\R))\cap H^{\gamma/2}(0,T; L^2(\R))$. 

\begin{Lemma} The embedding $I: \mc{H}\to \hat H$ is bounded. \label{embedding}
\end{Lemma}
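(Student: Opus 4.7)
The plan is to reduce the boundedness of $I$ to the two lower bounds on $b_{|L_k|}$ already established in Theorem~\ref{Abschnachschlag} and Theorem~\ref{ZielAbschnittNonlinearity}, applied mode by mode and then summed in $k$. The essential observation is that by definition of the projection-valued measure the $\mc{H}$-norm decouples across modes as
\begin{equation*}
\|\tilde u\|_{\mc{H}}^2 = \sum_{k\in \Z_{\odd}} \int_\R |\lambda|\,d\langle P^k_\lambda u_k, u_k\rangle_{L^2(\R)} = \sum_{k\in \Z_{\odd}} b_{|L_k|}(u_k, u_k),
\end{equation*}
so the estimate we need is really a pointwise-in-$k$ comparison with $|k|^\gamma \|u_k\|_{L^2}^2 + |k|^\delta \|u_k'\|_{L^2}^2$.

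First I would fix an arbitrary $\tilde u = (u_k)_{k\in\Z_{\odd}} \in \mc{H}$. Since each $u_k \in H^1(\R) = D(b_{L_k})$, both theorems apply: Theorem~\ref{Abschnachschlag} gives a constant $c>0$ independent of $k$ with $b_{|L_k|}(u_k,u_k) \geq c|k|^\gamma \|u_k\|_{L^2(\R)}^2$, and Theorem~\ref{ZielAbschnittNonlinearity} gives a constant $\tilde c > 0$ independent of $k$ with $b_{|L_k|}(u_k,u_k) \geq \tilde c\, |k|^\delta \|u_k'\|_{L^2(\R)}^2$. Adding these two estimates (after rescaling) yields
\begin{equation*}
|k|^\gamma \|u_k\|_{L^2(\R)}^2 + |k|^\delta \|u_k'\|_{L^2(\R)}^2 \leq \left(\tfrac{1}{c}+\tfrac{1}{\tilde c}\right) b_{|L_k|}(u_k,u_k)
\end{equation*}
for every $k \in \Z_{\odd}$.

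Summing this inequality over $k\in \Z_{\odd}$ and identifying the left-hand side with $\|\tilde u\|_{\hat H}^2$ and the right-hand side with a multiple of $\|\tilde u\|_{\mc{H}}^2$ gives the boundedness of $I$ with operator norm at most $\sqrt{1/c + 1/\tilde c}$. In particular, the finiteness of $\|\tilde u\|_{\mc{H}}$ forces $\|\tilde u\|_{\hat H}<\infty$, so the embedding is well-defined.

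Since the two key ingredients are already available, there is no genuine obstacle; the only minor point worth verifying is that the identification $b_{|L_k|}(u_k,u_k) = \int_\R |\lambda|\,d\langle P^k_\lambda u_k, u_k\rangle$ holds on all of $H^1(\R) = D(b_{L_k})$ and not merely on $D(L_k)$, which is standard since $b_{|L_k|}$ is the closed quadratic form associated with the self-adjoint operator $|L_k|$ and $H^1(\R)$ is its form domain; this was already used implicitly in the proof of Theorem~\ref{Abschnachschlag} via density of $D(L_k)$ in $H^1(\R)$ with respect to the form norm.
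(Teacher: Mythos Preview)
Your proof is correct and follows exactly the same approach as the paper: both invoke Theorem~\ref{Abschnachschlag} and Theorem~\ref{ZielAbschnittNonlinearity} mode by mode to bound $|k|^\gamma\|u_k\|_{L^2}^2 + |k|^\delta\|u_k'\|_{L^2}^2$ by a constant times $b_{|L_k|}(u_k,u_k)$, then sum over $k$. The paper's proof is simply a one-line version of what you have written out in detail.
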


\begin{proof} By the construction of norms in $\hat H$, $\mc{H}$ and Theorem~\ref{Abschnachschlag}, Theorem~\ref{ZielAbschnittNonlinearity} we see that 
$$
\|\tilde u\|_{\hat H}^2 \leq C \sum_{k\in \Z_{\odd}} b_{|L_k|}(u_k,u_k) = C\|\tilde u\|_{\mc{H}}^2
$$
for all $\tilde u\in \mc{H}$ with a constant $C>0$ which is independent on $\tilde u$. 
\end{proof}

For $2\leq q\leq\infty$ and $u\in L^q(D)\cap L^1(D)$ let us denote by $\hat u_k(\xi)$ the Fourier-transform with respect to $(x,t)\in D=\R\times(0,T)$, i.e.,
$$
\hat u_k(\xi) = \frac{1}{T\sqrt{2\pi}}\int_D u(x,t) e^{-i(\xi x+\omega kt)}\,d(x,t), \quad \omega= \frac{2\pi}{T}.
$$
For functions $\hat u = (\hat u_k(\xi))_{\xi\in \R, k\in \Z}\in L^{q'}(\R\times\Z)$ we consider the space $L^{q'}(\R\times\Z)$ with the norm $\|\hat u\|_{L^{q'}} = (\sum_{k\in \Z} \int_\R |\hat u_k(\xi)|^{q'}\,d\xi)^{1/{q'}}$ if $1\leq q'\leq 2$. The next result is a Riesz-Thorin based Hausdorff-Young inequality.

\begin{Lemma} \label{hausdorff_young}
Let $1\leq q'\leq 2$. Then there exists a constant $C(q)$ such that 
$$
\|u\|_{L^q(D)} \leq C(q) \|\hat u\|_{L^{q'}(\R\times\Z)}
$$
for all $\hat u \in L^{q'}(\R\times\Z)$. Hence the inverse Fourier-transform $\mc{S}_1: \hat u \mapsto u=\frac{1}{\sqrt{2\pi}}\sum_{k\in \Z} \int_\R \hat u_k(\xi)e^{i(\xi x+\omega kt)}\,d\xi$ has a continuous extension $L^{q'}(\R\times \Z)$ to $L^{q}(D)$. 
\end{Lemma}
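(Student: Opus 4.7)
\textbf{Proof plan for Lemma~\ref{hausdorff_young}.} The plan is to establish the bound at the two endpoints $q'=1$ and $q'=2$ and then interpolate by the Riesz--Thorin theorem applied on the product measure space $(\R\times\Z, dx\otimes\#)$, where $\#$ denotes counting measure. Equivalently, one views $L^{q'}(\R\times\Z) = \ell^{q'}(\Z; L^{q'}(\R))$ and the operator $\mc{S}_1$ sends a sequence of functions on $\R$ to a function on $D$.

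First I would treat the endpoint $q'=1$. For $\hat u\in L^1(\R\times\Z)$ the double sum and integral
\[
u(x,t) = \frac{1}{\sqrt{2\pi}}\sum_{k\in\Z}\int_\R \hat u_k(\xi) e^{i(\xi x+\omega kt)}\,d\xi
\]
converges absolutely and the triangle inequality yields $\|u\|_{L^\infty(D)}\leq (2\pi)^{-1/2}\|\hat u\|_{L^1(\R\times\Z)}$, so $\mc{S}_1$ extends to a bounded operator $L^1(\R\times\Z)\to L^\infty(D)$. Next, the endpoint $q'=2$ is exactly Plancherel's theorem: the map sending $\hat u_k(\xi)$ to the function $u(x,t)$ on $D=\R\times(0,T)$ is, up to the normalization constants $T$ and $\sqrt{2\pi}$ built into the definition of $\hat u_k$, a tensor product of the inverse Fourier series on $L^2(0,T)$ (with orthonormal basis $\{T^{-1/2}e^{ik\omega t}\}_{k\in\Z}$) and the inverse Fourier transform on $L^2(\R)$. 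Both are isometries, so their tensor product is an isometry between $L^2(\R\times\Z, dx\otimes \#)$ (with the norm defined in the statement) and $L^2(D)$, up to an explicit constant determined by the chosen prefactors. This gives $\|u\|_{L^2(D)}\leq C(2)\|\hat u\|_{L^2(\R\times\Z)}$.

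Having the two endpoints, the Riesz--Thorin interpolation theorem on the product measure space $(\R\times\Z, dx\otimes \#)$ on one side and $D$ on the other produces, for every $q'\in[1,2]$ and its conjugate exponent $q\in[2,\infty]$, a bounded extension $\mc{S}_1:L^{q'}(\R\times\Z)\to L^q(D)$ with a constant $C(q)$ depending only on $q$ (obtained by log-convex interpolation of the two endpoint constants). This yields the desired inequality, and in particular $\mc{S}_1$ extends continuously from $L^{q'}(\R\times\Z)$ to $L^q(D)$.

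I do not expect any real obstacle here; the only point one has to be careful about is to phrase the set-up so that Riesz--Thorin applies verbatim. The cleanest way is to work on the $\sigma$-finite measure space $(\R\times\Z, dx\otimes\#)$ and treat $\mc{S}_1$ as a single linear operator defined initially on the dense subspace $L^1\cap L^2(\R\times\Z)$ (or on the even denser space of finitely supported $(\hat u_k)_{k\in\Z}$ with each $\hat u_k\in \mc{S}(\R)$), verify the two endpoint bounds there, and invoke interpolation to extend to the intermediate exponents.
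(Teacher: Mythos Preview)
Your proposal is correct and follows essentially the same approach as the paper: verify the two endpoint bounds $q'=1$ (triangle inequality giving $L^1\to L^\infty$) and $q'=2$ (Plancherel giving $L^2\to L^2$), then invoke Riesz--Thorin on the $\sigma$-finite product measure space $(\R\times\Z, dx\otimes\#)$. The paper's proof is terser and does not spell out the dense-subspace and measure-space verifications you mention, but the argument is the same.
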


\begin{proof} By the Riesz-Thorin theorem it suffices to check the extremal cases $q'=1$ and $q'=2$. For $q'=2$ we have the Plancherel identities 
$$
\|u\|^2_{L^2(D)} =\int_D |u(x,t)|^2\,d(x,t) = \sum_{k\in\Z} \int_\R |u_k(x)|^2\,dx = \sum_{k\in\Z} \int_\R |\hat u_k(\xi)|^2\,d\xi = \|\hat u\|^2_{L^2(\R\times\Z)}
$$
and for $q'=1$ we have 
$$
\|u\|_{L^\infty(D)} = \sup_{(x,t)\in D} \left|\frac{1}{\sqrt{2\pi}}\sum_{k\in\Z} \int_\R \hat u_k(\xi)e^{i(\xi x+\omega kt)}\,d\xi\right| \leq \frac{1}{\sqrt{2\pi}}\sum_{k\in \Z} \int_\R |\hat u_k(\xi)|\,d\xi = \frac{1}{\sqrt{2\pi}}\|\hat u\|_{L^1(\R\times T)}.
$$
\end{proof}

\begin{Lemma} \label{embedding_by_fourier}
For $\tilde u\in \hat H$ let $\mc{S}_2(\tilde u) = \hat u$ with $\hat u_k$ being the $L^2$-extension of $\hat u_k(\xi) := \frac{1}{\sqrt{2\pi}}\int_\R u_k(x) e^{-ix\xi}\,dx$. Then $\mc{S}_2: \hat H\to L^{q'}(\R\times\Z)$ is a bounded linear operator for $2\geq q'> {q^\ast}'$ and 
$$
{q^\ast}' =\left\{\begin{array}{ll}
\frac{3}{2} & \mbox{ in case (V1)}, \vspace{\jot}\\
\frac{4}{3} & \mbox{ if case (V2)}, \vspace{\jot}\\
\frac{4}{2+\gamma} & \mbox{ in case (V3)}.
\end{array}
\right.
$$
Moreover, if we consider $\mc{S}_2^{k_0}: \tilde u  \mapsto (\ldots,0,\hat u_{-k_0}, \hat u_{-k_0+1}, \ldots, \hat u_{k_0-1}, \hat u_{k_0},0,\ldots)$ then $\mc{S}_2 = \lim_{k_0\to \infty} \mc{S}_2^{k_0}$ in the operator norm. 
\end{Lemma}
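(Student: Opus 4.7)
The plan is to work on the Fourier side via Plancherel and then estimate the $L^{q'}$ norm by a weighted Hölder inequality. By Plancherel in $x$,
\begin{equation*}
\|\tilde u\|_{\hat H}^2 = \sum_{k\in\Z_{\odd}} \int_\R w_k(\xi)\,|\hat u_k(\xi)|^2\,d\xi, \qquad w_k(\xi) := |k|^\gamma + |k|^\delta\,\xi^2.
\end{equation*}
For $q'\in({q^\ast}',2)$ I would write $|\hat u_k(\xi)|^{q'} = \bigl(w_k(\xi)|\hat u_k(\xi)|^2\bigr)^{q'/2}\,w_k(\xi)^{-q'/2}$ and apply Hölder with exponents $2/q'$ and $2/(2-q')$ on the product measure space $\Z_{\odd}\times\R$, giving
\begin{equation*}
\|\mc{S}_2\tilde u\|_{L^{q'}(\R\times\Z)}^{q'} \leq \|\tilde u\|_{\hat H}^{q'}\cdot I(s)^{(2-q')/2},\qquad s:=\frac{q'}{2-q'},\quad I(s):=\sum_{k\in\Z_{\odd}}\int_\R w_k(\xi)^{-s}\,d\xi.
\end{equation*}
The endpoint $q'=2$ is immediate from Plancherel together with $|k|^\gamma\geq 1$ for $k\in\Z_{\odd}$ (since $\gamma>0$ in each case).

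Next, I would verify that $I(s)<\infty$ precisely when $q'>{q^\ast}'$. The rescaling $\xi=|k|^{(\gamma-\delta)/2}\eta$, valid because $\gamma>\delta$ in every case, yields
\begin{equation*}
\int_\R w_k(\xi)^{-s}\,d\xi = |k|^{(\gamma-\delta)/2-\gamma s}\int_\R(1+\eta^2)^{-s}\,d\eta,
\end{equation*}
so the double sum/integral converges iff $s>1/2$ (for the $\eta$-integral) and $\gamma s-(\gamma-\delta)/2>1$ (for the $k$-sum over $\Z_{\odd}$). The latter condition is the binding one; rewriting it via $q'=2s/(1+s)$ produces the threshold $q'>2(2+\gamma-\delta)/(2+3\gamma-\delta)$. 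Substituting $(\gamma,\delta)=(1,-3)$, $(1,-1)$, $(\gamma,\gamma-2)$ from Theorems~\ref{Abschnachschlag} and \ref{ZielAbschnittNonlinearity} reproduces exactly the values $3/2,\,4/3,\,4/(\gamma+2)$ listed as ${q^\ast}'$ in cases (V1), (V2), (V3).

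Finally, for the operator-norm convergence $\mc{S}_2=\lim_{k_0\to\infty}\mc{S}_2^{k_0}$, the same Hölder estimate applied to the difference, whose Fourier representation simply truncates the $k$-sum to $|k|>k_0$, gives
\begin{equation*}
\|(\mc{S}_2-\mc{S}_2^{k_0})\tilde u\|_{L^{q'}(\R\times\Z)}^{q'}\leq \|\tilde u\|_{\hat H}^{q'}\cdot\Bigl(\sum_{|k|>k_0}\int_\R w_k(\xi)^{-s}\,d\xi\Bigr)^{(2-q')/2},
\end{equation*}
and the tail of the convergent series $I(s)$ tends to zero as $k_0\to\infty$, independently of $\tilde u$. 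I expect the only slightly delicate point in executing this plan to be the exponent bookkeeping: one must carefully track the interplay between $s$ and $q'$ and check that the abstract threshold $2(2+\gamma-\delta)/(2+3\gamma-\delta)$ matches ${q^\ast}'$ in each of the three cases. Everything else reduces to Plancherel, Hölder, and a one-parameter scaling argument.
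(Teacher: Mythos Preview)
Your proof is correct and follows essentially the same strategy as the paper: express the $\hat H$-norm on the Fourier side via Plancherel, apply H\"older's inequality on $\Z_{\odd}\times\R$ with a weight, and reduce everything to checking convergence of a weighted double sum/integral. The main difference is cosmetic. The paper introduces an auxiliary parameter $\rho\geq 2$ and works with the weight $|\xi|^{2\gamma/\rho}+|k|^\gamma$, then uses Young's inequality to bound this by $C(|k|^\gamma+|\xi|^2|k|^{\gamma-\rho})$ so that the weighted $L^2$-factor is dominated by $\|\tilde u\|_{\hat H}^2$; in each case the paper's choice of $\rho$ equals $\gamma-\delta$, and the resulting threshold $\frac{2\rho+4}{2\gamma+\rho+2}$ coincides with your formula $\frac{2(2+\gamma-\delta)}{2+3\gamma-\delta}$. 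By working directly with the natural weight $w_k(\xi)=|k|^\gamma+|k|^\delta\xi^2$ you bypass both the auxiliary parameter and the Young-inequality step, which makes your argument a bit shorter and more transparent. Your treatment of the tail for the operator-norm convergence $\mc{S}_2^{k_0}\to\mc{S}_2$ is identical to the paper's.
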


\begin{proof} Choose $\rho\geq 2>\gamma>0$. We note that Young's inequality with exponents $ \frac{\rho}{\rho-\gamma}$ and $\frac{ \rho}{\gamma}$ implies that 
$$
|\xi|^\frac{2\gamma}{\rho} |k|^{\rho-\gamma} \leq \frac{\rho-\gamma}{\rho}|k|^\rho + \frac{\gamma}{\rho} |\xi|^2 \mbox{ for all } \xi\in \R, k\in \Z_{\odd}.
$$
Therefore
\begin{equation} \label{vorauss_fuer_einbettung}
|\xi|^\frac{2\gamma}{\rho} +|k|^\gamma \leq C(\rho,\gamma) \left(|k|^\gamma + |\xi|^2 |k|^{\gamma-\rho}\right) \mbox{ for all } \xi\in \R, k\in \Z_{\odd}.
\end{equation}
Making use of this elementary inequality we deduce for $\tilde u\in \hat{H}$ that 
\begin{align}
\|\hat u\|^{q'}_{L^{q'}(\R\times\Z_{\odd})} & = \sum_{k\in \Z_{\odd}} \int_\R |\hat u_k(\xi)|^{q'}\,d\xi = \sum_{k\in \Z_{\odd}} \int_\R |\hat u_k(\xi)|^{q'}\frac{(|\xi|^\frac{2\gamma}{\rho}+|k|^\gamma)^\frac{q'}{2}}{(|\xi|^\frac{2\gamma}{\rho}+|k|^\gamma)^\frac{q'}{2}}\,d\xi \nonumber\\
& \leq \left(\sum_{k\in \Z_{\odd}} \int_\R |\hat u_k(\xi)|^2 (|\xi|^\frac{2\gamma}{\rho}+|k|^\gamma) \,d\xi\right)^\frac{q'}{2}\cdot \left(\sum_{k\in \Z_{\odd}} \int_\R (|\xi|^\frac{2\gamma}{\rho}+|k|^\gamma)^\frac{-q'}{2-q'}\,d\xi\right)^\frac{2-q'}{2} \nonumber \\
& \leq \tilde C(\rho,\gamma) \left(\sum_{k\in \Z_{\odd}} \int_\R |\hat u_k(\xi)|^2 (|k|^\gamma + |\xi|^2|k|^{\gamma-\rho}) \,d\xi\right)^\frac{q'}{2}\cdot \left(\sum_{k\in \Z_{\odd}} \int_\R (|\xi|^\frac{2\gamma}{\rho}+|k|^\gamma)^\frac{-q'}{2-q'}\,d\xi\right)^\frac{2-q'}{2} \label{estimate_s2u}\\
& = \tilde C(\rho,\gamma) \left(\sum_{k\in \Z_{\odd}} \|u_k\|^2_{L^2(\R)} |k|^\gamma + \|u_k'\|^2_{L^2(\R)} |k|^{\gamma-\rho}\right)^\frac{q'}{2}\cdot \left(\sum_{k\in \Z_{\odd}} \int_\R (|\xi|^\frac{2\gamma}{\rho}+|k|^\gamma)^\frac{-q'}{2-q'}\,d\xi\right)^\frac{2-q'}{2} \nonumber\\
& = \tilde C(\rho,\gamma) I_1^\frac{q'}{2} \cdot I_2^\frac{2-q'}{2}. \nonumber
\end{align}
Next we investigate the expressions $I_1$ and $I_2$ separately. To check the convergence of $I_2$ we compute
\begin{align*}
I_2 &= \sum_{k\in \Z_{\odd}} \int_\R (|\xi|^\frac{2\gamma}{\rho}+ |k|^\gamma)^\frac{-q'}{2-q'}\,d\xi = \sum_{k\in Z_{\odd}} |k|^{-\frac{q'\gamma}{2-q'}} \int_\R \left(1+ \frac{|\xi|^\frac{2\gamma}{\rho}}{|k|^\gamma}\right)^\frac{-q'}{2-q'}\,d\xi \\
&= \sum_{k\in \Z_{\odd}} |k|^{\frac{-q'\gamma}{2-q'}+\frac{\rho}{2}} \int_\R \left(1+ |\tau|^\frac{2\gamma}{\rho}\right)^\frac{-q'}{2-q'}\,d\tau \mbox{ with } \xi = |k|^\frac{\rho}{2}\tau.
\end{align*}
The integral converges provided $\frac{-2\gamma q'}{\rho(2-q')}<-1$, i.e., $q'> \frac{2\rho}{2\gamma+\rho}$. The series converges provided $\frac{-q'\gamma}{2-q'}+\frac{\rho}{2}<-1$, i.e., $q'> \frac{2\rho+4}{2\gamma+\rho+2}$. The more restrictive condition amounts to $q'> \frac{2\rho+4}{2\gamma+\rho+2}$.

Now we turn to estimating $I_1$. In case of assumption (V1) where $\gamma=1$, $\delta=-3$ the choice $\rho=4$ leads to 
$I_1 \leq C\|\tilde u\|_{\hat H}^2$ and the convergence condition $q'>\frac{2\rho+4}{2\gamma+\rho+2} = \frac{3}{2} = {q^\ast}'$. In case of assumption (V2) where $\gamma=1$, $\delta=-1$ the choice $\rho=2$ amounts to $I_1 \leq C\|\tilde u\|_{\hat H}^2$ and $q'>\frac{2\rho+4}{2\gamma+\rho+2}=\frac{4}{3}={q^\ast}'$. Finally in the case of assumption (V3) where $\gamma<\frac{3}{2}-r$, $\delta=\gamma-2$ the choice $\rho=2$ leads to $q'>\frac{2\rho+4}{2\gamma+\rho+2}=\frac{4}{2+\gamma}={q^\ast}'$. Thus, the convergence of $I_2$ is ensured in any case. Therefore, in all cases we have found that $I_2$ converges for $q'>{q^\ast}'$ and that $I_1\leq C \|\tilde u\|^2_{\hat H}$. In view of \eqref{estimate_s2u} this establishes the claim of the boundedness of $\mc{S}_2$.

\medskip

The statement that $\mc{S}_2 = \lim_{k_0\to \infty} \mc{S}_2^{k_0}$ in the operator norm can be seen as follows: as in \eqref{estimate_s2u} the difference $\|(\mc{S}_2-\mc{S}_2^{k_0})\hat u\|^{q'}_{L^{q'}(\R\times\Z_{\odd})}$ can be estimated by $\tilde C(\rho,\gamma) I_1^\frac{q'}{2} \cdot I_{2,k_0}^\frac{2-q'}{2}$ where 
$$
I_{2,k_0} = \sum_{|k|> k_0, k\in \Z_{\odd}} |k|^{\frac{-q'\gamma}{2-q'}+\frac{\rho}{2}} \int_\R \left(1+ |\tau|^\frac{2\gamma}{\rho}\right)^\frac{-q'}{2-q'}\,d\tau \to 0 \mbox{ as } k_0 \to \infty.
$$
\end{proof}

After these preparations the proof of Theorem~\ref{HauptresultatSection5.5} becomes quite simple.

\smallskip

\textit{Proof of Theorem~\ref{HauptresultatSection5.5}}: Observe that $\mc{S}=\mc{S}_1\circ \mc{S}_2\circ I$, where the operators $\mc{S}_1$, $\mc{S}_2$, $I$ are bounded by Lemma~\ref{embedding}, Lemma~\ref{hausdorff_young}, Lemma~\ref{embedding_by_fourier}, respectively. The condition $2\geq q'> {q^\ast}'$ is equivalent to $2\leq q < q^\ast$. 

\smallskip

To see that $\mc{S}\colon\mc{H}\to L^{p+1}(D)$ is one-to-one let $\tilde{u}\in\mc{H}$ be given with $\mc{S}\tilde{u}=0$. In particular, $\mc{S}\tilde{u}\in L^2(D)$ and hence by the Plancherel identity
\begin{align*}
0=\Vert\mc{S}\tilde{u}\Vert_{L^2(D)}^2=\sum_{k\in\Z_{\odd}}\int_\R \vert u_{k}(x)\vert^2\,dx
\end{align*}
we get $\tilde{u}=0$. 

\smallskip

Let now $K\subset \overline{D}$ be compact. The operator $\mc{S}^{k_0} = \mc{S}_1\circ \mc{S}_2^{k_0}\circ I$ sees only finitely many Fourier-coefficients in time and these coefficients belong to $H^1(\R)$ with respect to space. Hence, by restriction to $K$ the operator $\mc{S}^{k_0}$ maps $\mc{H}$ compactly into $L^q(K)$ for every $q\in [1,\infty)$. For $q$ in the range of Theorem~\ref{HauptresultatSection5.5} we can use Lemma~\ref{embedding_by_fourier} to see that $\mc{S}^{k_0}$ converges to $\mc{S}$ in the operator norm as $k_0\to\infty$, and hence the compactness of $\mc{S}: \mc{H} \to L^q(K)$ follows. 
\qed

%Due to the boundedness of $\mc{S}_1, \mc{S}_2, \mc{S}_3, \mc{S}_5$ and Corollary~\ref{lastinequ} we conclude
%\begin{align*}
%\Vert \mc{S}\tilde{u}\Vert_{L^q(D)} \leq \Vert \mc{S}_5(\mc{S}\tilde{u})\Vert_{L^q(\R^2)} \leq c(q) \Vert \mc{S}_5(\mc{S}\tilde{u})\Vert_{H^{1/4}(\R^2)} \leq c(q) C \Vert \mc{S}\tilde{u}\Vert_{H^{1/4}_{\per}(\R^2)} \leq c(q) \tilde{C} \Vert \tilde{u}\Vert_{\mc{H}},
%\end{align*}
%where $C\coloneqq \Vert \mc{S}_5\Vert>0$ and $\tilde{C}\coloneqq C \Vert\mc{S}_3\Vert\Vert\mc{S}_2\Vert\Vert\mc{S}_1\Vert>0$. \qed

%\renewcommand{\thesection}{\Alph{section}}
%\setcounter{section}{0}
%\renewcommand{\theTheorem}{\Alph{Theorem}}
%\setcounter{Theorem}{0}

\section{Appendix} \label{ZweiterAnhang}

\begin{Lemma} \label{monotonicity}
Let $V_1, V_2, V\in H^1_{\per}(\R)$ be periodic potentials with $V_1\leq V_2$ and $\essinf_\R V_1>0$. Consider the differential operators $L_i := \frac{1}{V_i} \bigl(-\frac{d^2}{dx^2} + V(x)\bigr)$, $i=1,2$ and $L= -\frac{d^2}{dx^2} + V(x)$. Then the following holds for the resolvent sets $\rho(L_1), \rho(L_2), \rho(L)$: 
\begin{itemize}
\item[(a)] If $[-a,a]\subset \rho(L_2)$ then $[-a,a]\subset \rho(L_1)$.
\item[(b)] If $[-a,a]\subset \rho(L_2)$ and $V_1\equiv\const$ then $[-V_1a, V_1a]\subset \rho(L)$.
\end{itemize}
\end{Lemma}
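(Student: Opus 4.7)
My plan is to reduce the spectral analysis of $L_i$ on the weighted space $L^2(\R, V_i\,dx)$ to that of a $\lambda$-dependent Schrödinger operator on the usual $L^2(\R)$. Since $V_i\in H^1_{\per}(\R)\hookrightarrow L^\infty(\R)$ and $\essinf V_i\geq \essinf V_1>0$, multiplication by $V_i^{1/2}$ is a bounded bijection between the two spaces, so $L_i$ is unitarily equivalent to $V_i^{-1/2}(-\frac{d^2}{dx^2}+V)V_i^{-1/2}$ on $L^2(\R)$. Consequently
\[
\lambda\in\rho(L_i)\quad\Longleftrightarrow\quad 0\in\rho\bigl(K_\lambda^{(i)}\bigr),\qquad K_\lambda^{(i)}:=-\tfrac{d^2}{dx^2}+V-\lambda V_i,
\]
the right-hand operator being a self-adjoint periodic Schrödinger operator on $L^2(\R)$ with domain $H^2(\R)$. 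This reformulation is the key to comparing the gaps for different weights.

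By Floquet-Bloch theory, $\sigma(K_\lambda^{(i)})=\bigcup_{\theta\in[0,2\pi)}\{\mu_n^{(i)}(\theta,\lambda):n\in\N\}$, where $\mu_n^{(i)}(\theta,\lambda)$ denotes the $n$-th eigenvalue of $K_\lambda^{(i)}$ on $[0,2\pi]$ with Bloch boundary conditions $u(2\pi)=e^{i\theta}u(0)$, $u'(2\pi)=e^{i\theta}u'(0)$. Applied to the quadratic form $q^{(i)}_{\theta,\lambda}(u)=\int_0^{2\pi}(|u'|^2+V|u|^2-\lambda V_i|u|^2)\,dx$, the min-max principle will supply the two ingredients I need. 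First, the hypothesis $V_1\leq V_2$ gives the comparison
\[
\mu_n^{(1)}(\theta,\lambda)\geq\mu_n^{(2)}(\theta,\lambda)\ \text{for }\lambda>0,\qquad \mu_n^{(1)}(\theta,\lambda)\leq\mu_n^{(2)}(\theta,\lambda)\ \text{for }\lambda<0,
\]
with equality at $\lambda=0$. Second, since $V_i>0$, the map $\lambda\mapsto\mu_n^{(i)}(\theta,\lambda)$ is continuous and non-increasing, by standard analytic perturbation theory applied to the linear pencil $\lambda\mapsto K_\lambda^{(i)}$.

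For part (a) I will argue by contradiction, assuming some $\lambda^\ast\in[-a,a]\cap\sigma(L_1)$. The case $\lambda^\ast=0$ is immediate: then $0\in\sigma(K_0^{(1)})=\sigma(K_0^{(2)})$, which forces $0\in\sigma(L_2)$, contradicting the hypothesis. Otherwise I may assume $\lambda^\ast\in(0,a]$ (the negative case is handled symmetrically). Then $\mu_n^{(1)}(\theta,\lambda^\ast)=0$ for some $n,\theta$; monotonicity in $\lambda$ yields $\mu_n^{(1)}(\theta,0)\geq 0$, while the potential comparison yields $\mu_n^{(2)}(\theta,\lambda^\ast)\leq 0$. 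Combined with the equality $\mu_n^{(2)}(\theta,0)=\mu_n^{(1)}(\theta,0)\geq 0$, continuity forces $\mu_n^{(2)}(\theta,\lambda')=0$ for some $\lambda'\in[0,\lambda^\ast]\subset[-a,a]$, hence $\lambda'\in\sigma(L_2)$, contradicting $[-a,a]\subset\rho(L_2)$.

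Part (b) then follows from part (a) by a trivial rescaling. When $V_1\equiv\const$, $L_1=V_1^{-1}L$, so $\sigma(L_1)=V_1^{-1}\sigma(L)$; the inclusion $[-a,a]\subset\rho(L_1)$ provided by (a) is therefore equivalent to $[-V_1 a, V_1 a]\subset\rho(L)$. The only delicate step is the contradiction in (a), where the right interplay between min-max monotonicity of the Bloch eigenvalues in the potential and monotonicity/continuity in the spectral parameter $\lambda$ is what transfers the gap information from $L_2$ to $L_1$; I do not expect any further obstacle.
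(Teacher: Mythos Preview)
Your proof is correct and complete. It differs from the paper's argument in its parametrization of the comparison: the paper works directly with the weighted eigenvalue problem, comparing the $n$-th periodic and antiperiodic eigenvalues of $L_1$ and $L_2$ via the min--max principle applied to the Rayleigh quotient $R_i(u)=\frac{\int |u'|^2+V|u|^2}{\int V_i|u|^2}$. The sign of the numerator determines the direction of the inequality when $V_1\le V_2$, so positive band edges of $L_2$ get pushed up and negative ones pushed down when passing to $L_1$, and hence a gap around zero can only widen. You instead linearize the problem by introducing the pencil $K_\lambda^{(i)}=-\frac{d^2}{dx^2}+V-\lambda V_i$ and track its Bloch eigenvalues, combining the standard potential comparison $K_\lambda^{(1)}\ge K_\lambda^{(2)}$ for $\lambda>0$ with monotonicity and continuity in $\lambda$. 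This buys you a reduction to the textbook ordering of Schr\"odinger eigenvalues under ordered potentials, avoiding the slightly non-standard min--max for sign-changing weighted quotients; it also makes the intermediate-value step explicit, whereas the paper's argument leaves implicit the observation that $0\in\rho(L_1)\Leftrightarrow 0\in\rho(L_2)$ (needed to rule out $[-a,a]$ lying inside a band of $L_1$). Part (b) is handled identically in both proofs.
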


\begin{proof} Band edges of periodic differential operators are occupied by either periodic or anti-periodic eigenvalues, cf. \cite{EasthamPer}. If $\lambda(L_i)$ denotes the $n$-th periodic eigenvalue of $L_i$ ($i=1,2$) and $\lambda(L_2)>0$ then the Poincar\'{e} min-max principle implies that $\lambda(L_1) \geq \lambda(L_2)$. If $\lambda(L_2)<0$ then $\lambda(L_1)\leq \lambda(L_2)$. The same holds for the antiperiodic eigenvalues. Therefore, if $L_2$ has a spectral gap around $0$ then $L_1$ also has a spectral gap near zero of at least the same size. This proves (a). Statement (b) follows from (a) and $\sigma(L_1)= \frac{1}{V_1}\sigma(L)$ so that $\rho(L_1)= \frac{1}{V_1}\rho(L)$.  
\end{proof}

\begin{Lemma} \label{fundablpktwgegennull}
Let $f\in H^1(\R)$. Then for $\varepsilon>0$ we have
\begin{align} \label{AbschfuerweiterePunktauswertung}
\sum_{n\in\Z} \vert f(2\pi n)\vert^2 \leq \paren{\frac{1}{2\pi}+\frac{1}{2\varepsilon}} \Vert f\Vert^2_{L^2(\R)} + \frac{\varepsilon}{2} \Vert f'\Vert^2_{L^2(\R)}.
\end{align}
\end{Lemma}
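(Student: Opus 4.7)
The plan is to bound each pointwise value $|f(2\pi n)|^2$ by a local $H^1$-norm of $f$ on an interval centred at $2\pi n$, chosen so that these intervals tile $\R$, and then sum. The constants $\frac{1}{2\pi}$ and $\frac{\varepsilon}{2}$ in \eqref{AbschfuerweiterePunktauswertung} strongly suggest integrating over an interval of length $2\pi$ symmetric about $2\pi n$, which is where the factor-of-two improvement over the naive one-sided argument comes from.

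Concretely, I would start from the fundamental theorem of calculus applied to $s\mapsto |f(s)|^2$, which for $f\in H^1(\R)$ gives
\begin{equation*}
|f(2\pi n)|^2 = |f(y)|^2 + 2\Re\int_y^{2\pi n} f'(s)\overline{f(s)}\,ds \qquad \text{for every }y\in\R.
\end{equation*}
Integrate this identity over $y\in I_n := [2\pi n-\pi,\,2\pi n+\pi]$, which has length $2\pi$. The left-hand side contributes $2\pi|f(2\pi n)|^2$, while the first term on the right yields $\int_{I_n}|f(y)|^2\,dy$. For the double integral, I would apply Fubini's theorem separately on $\{y<2\pi n\}$ and $\{y>2\pi n\}$ to obtain
\begin{equation*}
\int_{I_n}\Bigl|\int_y^{2\pi n}|f'(s)||f(s)|\,ds\Bigr|\,dy = \int_{I_n}(\pi-|s-2\pi n|)\,|f'(s)||f(s)|\,ds \leq \pi\int_{I_n}|f'(s)||f(s)|\,ds.
\end{equation*}
Dividing by $2\pi$ then yields
\begin{equation*}
|f(2\pi n)|^2 \leq \frac{1}{2\pi}\int_{I_n}|f(y)|^2\,dy + \int_{I_n}|f'(s)||f(s)|\,ds.
\end{equation*}

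Next, I would apply Young's inequality in the form $|f'||f|\leq \frac{\varepsilon}{2}|f'|^2+\frac{1}{2\varepsilon}|f|^2$ to the last integrand, which turns the previous estimate into
\begin{equation*}
|f(2\pi n)|^2 \leq \Bigl(\frac{1}{2\pi}+\frac{1}{2\varepsilon}\Bigr)\int_{I_n}|f|^2\,ds + \frac{\varepsilon}{2}\int_{I_n}|f'|^2\,ds.
\end{equation*}
The final step is to sum over $n\in\Z$; since the intervals $(I_n)_{n\in\Z}$ overlap only on a set of Lebesgue measure zero and their union is $\R$, the sums of the right-hand side integrals collapse to the full $L^2(\R)$-norms, yielding \eqref{AbschfuerweiterePunktauswertung}.

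I do not expect a real obstacle here. The only point that requires care is selecting the symmetric interval $I_n$ centred at $2\pi n$ rather than the one-sided interval $[2\pi n,2\pi(n+1)]$: the one-sided choice would force the weight factor to be $2\pi$ instead of $\pi$ after Fubini, losing a factor of two and giving $\varepsilon$ and $1/\varepsilon$ in place of the sharper $\varepsilon/2$ and $1/(2\varepsilon)$ required by the statement.
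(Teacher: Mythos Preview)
Your proof is correct and essentially identical to the paper's: the paper reaches the same local bound $|f(2\pi n)|^2 \leq \frac{1}{2\pi}\int_{I_n}|f|^2 + \int_{I_n}|f'||f|$ by integrating $\frac{d}{dt}\bigl[(t\pm\pi)|f(2\pi n+t)|^2\bigr]$ over $[-\pi,0]$ and $[0,\pi]$ separately and adding, which is exactly your averaging-plus-Fubini computation written in integration-by-parts form. The subsequent Young's inequality and summation over $n$ are the same.
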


\begin{proof}
Let $u_n(x)\coloneqq f(2\pi n+x)$. We compute
\begin{align} \label{Teil1derAbschfueru0}
|u_n(0)|^2=\frac{1}{\pi}\int_{-\pi}^0 \frac{d}{dt}\bra{(t+\pi)|u_n(t)|^2}dt \leq \frac{1}{\pi} \int_{-\pi}^0 |u_n(t)|^2 dt +2\int_{-\pi}^0 \vert u_n(t)u_n'(t)\vert dt.
\end{align}
In the same manner
\begin{align} \label{Teil2derAbschfueru0}
|u_n(0)|^2 = -\frac{1}{\pi}\int_0^\pi \frac{d}{dt}\bra{(\pi-t)|u_n(t)|^2}dt \leq \frac{1}{\pi}\int_0^\pi |u_n(t)|^2 dt+ 2\int_0^\pi \vert u_n(t)u_n'(t)\vert dt.
\end{align}
By adding \eqref{Teil1derAbschfueru0} and \eqref{Teil2derAbschfueru0} we conclude
\begin{align*}
|u_n(0)|^2\leq \frac{1}{2\pi} \Vert u_n\Vert^2_{L^2(-\pi,\pi)}+\Vert u_n\Vert_{L^2(-\pi,\pi)}\Vert u_n'\Vert_{L^2(-\pi,\pi)} \leq \frac{1}{2}\paren{\frac{1}{\varepsilon}+\frac{1}{\pi}}\Vert u_n\Vert^2_{L^2(-\pi,\pi)}+\frac{\varepsilon}{2}\Vert u_n'\Vert^2_{L^2(-\pi,\pi)}
\end{align*}
and hence 
\begin{align*} %\label{abschfuerfzweipin}
|f(2\pi n)|^2\leq \frac{1}{2}\paren{\frac{1}{\varepsilon}+\frac{1}{\pi}} \Vert f(2\pi n+\cdot)\Vert^2_{L^2(-\pi,\pi)}+\frac{\varepsilon}{2}\Vert f'(2\pi n+\cdot)\Vert^2_{L^2(-\pi,\pi)}.
\end{align*}
The claim follows by a summation over $n\in\Z$.
\end{proof}

\subsection*{Proof of Lemma~\ref{ueberpruefung_cp}}~ For the purpose of this proof let us define the space 
$$
\mc{H}_0 \coloneqq \Bigl\{ \tilde{u}=(u_k)_{k\in \Z_{\odd}}: u_k\in H^1(\R) \text{ for all } k\in\Z_{\odd} \text{  and } \sum_{k\in\Z_{\odd}} \int_\R  |\lambda| \langle P_\lambda^k u_k,u_k\rangle_{L^2(\R)}\,d\lambda <\infty \Bigr\}
$$
equipped with the same norm and inner product as $\mc{H}$. 
It can be seen as a variant of $\mc{H}$ but without the additional requirement of conjugation-symmetry $\bar u_k = u_{-k}$. Clearly, $\mc{H}_{k,\mono}\not\subset \mc{H}$ but $\mc{H}_{k,\mono}\subset \mc{H}_0$. 

First we check that $J'(\tilde u)=0$ implies (and hence is equivalent to) $J'(\tilde u)[\tilde\phi]=0$ for all $\tilde\phi\in \mc{H}_0$, i.e., that we can allow test functions $\tilde\phi$ without the extra conjugation-symmetry. For $\tilde\phi\in \mc{H}_0$ let us define the splitting  
$$
\phi_k = \phi^a_k+\phi^b_k \quad \mbox{ with } \quad \phi^a_k := \frac{\phi_k+\bar\phi_{-k}}{2}, \quad \phi^b_k := \frac{\phi_k-\bar\phi_{-k}}{2}.
$$
Then $\tilde \phi^a, i\tilde \phi^b\in \mc{H}$ and hence $J'(\tilde u)[\tilde \phi^a]=0$ and $0=J'(\tilde u)[i\tilde\phi^b]=(-i)J'(\tilde u)[\tilde\phi^b]$. Therefore we also have $J'(\tilde u)[\tilde \phi]=J'(\tilde u)[\tilde\phi^a+\tilde\phi^b]=0$ as claimed.

\smallskip

(i) $\Leftrightarrow$ (ii): With the help of the first step we know that $J'(\tilde u)|_{\mc{H}}=0$ implies $J'(\tilde u)|_{\mc{H}_{k,\mono}}=0$. Now we verify the reverse: $J'(\tilde u)|_{\mc{H}_{k,\mono}}=0$ for all $k\in \Z_{\odd}$ implies $J'(\tilde u)|_{\mc{H}}=0$. Note that $\overline{\mc{H}}_{k,\mono}$ consists of all mono-modal Fourier-series, where the only non-vanishing Fourier-coefficient belongs to $H^1(\R)$. Therefore any $\tilde \phi\in \mc{H}_0$ can be seen as $\tilde\phi = \lim_{m\to \infty} \tilde\phi^m$ (convergence with respect to the $\|\cdot\|_{\mc{H}}$-norm) where for $m\in \N$, $m$ odd, we set 
$$
\tilde\phi^m := (\phi^m_l)_{l\in \Z_{\odd}} \mbox{ with } \phi^m_l:= \left\{\begin{array}{ll}
\phi_l & \mbox{ if } l \in \Z_{\odd}, |l|\leq m, \vspace{\jot}\\
0 & \mbox{ if } l\in \Z_{\odd}, |l|> m. 
\end{array}
\right.
$$
Since $\tilde\phi^m$ is a finite sum of members of $\overline{\mc{H}}_{k,\mono}$ for $k=-m, -m+2, \ldots ,-1,1,\ldots, m-2,m$ we have $J'(\tilde u)[\tilde\phi^m]=0$. Then $J'(\tilde u)[\tilde\phi]=0$ follows since $J'$ is a continuous linear functional on $\mc{H}_0$ and $\|\tilde \phi^m-\tilde\phi\|_{\mc{H}}\to 0$ as $m\to \infty$. Thus $J'(\tilde u)|_{\mc{H}_0} =0$ and the claim $J'(\tilde u)|_{\mc{H}} =0$ follows by the first step.\qed

\subsection*{Proof of Lemma~\ref{ConccompLemma}}

\begin{Lemma} \label{UeberdeckmitvierLemma}
Let $0<r<T$. Then there is a sequence $(y_l)_{l\in\N}$ in $D$ s.t. 
$D\subset \bigcup_{l\in\N} B_r(y_l)$ and each point $y\in D$ is contained in at most four balls $B_r(y_l)$.
\end{Lemma}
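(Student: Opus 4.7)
The plan is to place the centers on a rectangular lattice adapted to the strip $D = \R \times (0,T)$. First I fix a horizontal spacing $a$ with $r < a < r\sqrt{2}$ and set $s := \sqrt{r^2 - a^2/4}$, which lies in $(r/2, r)$ and in particular satisfies $2s > r$. The proposed centers are the points $(ia, t_j)$ with $i \in \Z$ and $j \in \{1, \dots, N\}$, where $N \in \N$ and $0 < t_1 < \cdots < t_N < T$ will be chosen so that $t_1 < s$, $T - t_N < s$ (to cover points with $t$-coordinate close to $0$ and $T$) and $r \leq t_{j+1} - t_j < 2s$ for $j = 1, \dots, N-1$. The lower bound on the vertical gaps will yield the multiplicity bound, while the upper bound, together with the boundary conditions, will yield coverage.

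The existence of such $N$ and $t_j$'s for every $T > r$ is an elementary counting exercise: writing $T = t_1 + (T - t_N) + \sum_{j=1}^{N-1}(t_{j+1} - t_j)$ and plugging in the constraints yields the requirement $(N-1)r < T < 2Ns$, so the admissible values of $N$ form the set of integers in the interval $(T/(2s), T/r + 1)$. This interval has length $1 + T(1/r - 1/(2s))$, which is strictly greater than $1$ because $2s > r$; hence it contains a positive integer, and the values $t_j$ can then be placed freely within their admissible ranges.

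Coverage will be verified as follows: given $(x,t) \in D$, choose $i \in \Z$ with $|x - ia| \leq a/2$ and $j \in \{1, \dots, N\}$ with $|t - t_j| < s$, which is possible by the boundary and gap conditions. Then
\[
(x - ia)^2 + (t - t_j)^2 \leq a^2/4 + (t - t_j)^2 < a^2/4 + s^2 = r^2,
\]
so $(x,t) \in B_r(ia, t_j)$. For the multiplicity bound, the key observation is that if $x \in [ka, (k+1)a]$ for some $k \in \Z$, then $|x - la| \geq a > r$ for every $l \notin \{k, k+1\}$, so at most two horizontal indices $i$ satisfy $|x - ia| < r$. Similarly, the gap condition $t_{j+1} - t_j \geq r$ implies that the open interval $(t - r, t + r)$ contains at most two of the values $t_j$. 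Consequently $(x,t)$ lies in at most $2 \cdot 2 = 4$ of the balls $B_r(y_l)$. The mildly subtle step of the whole argument is guaranteeing the existence of a workable $N$ uniformly in $T > r$; once this counting step is settled, the two geometric verifications are entirely routine.
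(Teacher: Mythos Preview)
Your proof is correct and follows the same lattice-covering idea as the paper's one-line argument, which simply takes $(y_l)_{l\in\N}$ to be an enumeration of $r\Z^2 \cap D$. Your version is more carefully argued, in particular choosing the vertical levels $t_1,\dots,t_N$ so that points near $t=0$ and $t=T$ are genuinely covered while keeping the gaps $\geq r$ for the multiplicity bound---a boundary detail the paper glosses over.
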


\begin{proof}
The statement follows if we choose $(y_l)_{l\in\N}$ to be an enumeration of $r \Z^2 \cap D$.
\end{proof}

\begin{Lemma} \label{FolgeLvierdomainbeschr}
With the notation of Lemma~\ref{UeberdeckmitvierLemma} and $p^\ast$ from Theorem~\ref{Hauptresultat} for every $\bar q  \in [2,p^\ast+1)$ there is a constant $C=C(r,\bar q)>0$ such that
\begin{align*}
\sum_{l\in\N} \Vert \mc{S}\tilde{u}\Vert^2_{L^{\bar q}(B_r(y_l))} \leq C \Vert \tilde{u}\Vert^2_{\mc{H}}
\end{align*}
for all $\tilde{u}\in\mc{H}$.
\end{Lemma}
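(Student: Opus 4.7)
The case $\bar q=2$ is immediate from the bounded overlap of the cover (Lemma~\ref{UeberdeckmitvierLemma}) and the $L^2$-boundedness of $\mc S$ granted by Theorem~\ref{HauptresultatSection5.5}: interchanging sum and integral,
$$\sum_{l\in\N}\|\mc S\tilde u\|_{L^2(B_r(y_l))}^2 = \int_D\Bigl(\sum_l\chi_{B_r(y_l)}\Bigr)|\mc S\tilde u|^2\,d(x,t) \leq 4\|\mc S\tilde u\|_{L^2(D)}^2 \leq C\|\tilde u\|_{\mc H}^2.$$

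For $\bar q\in(2,p^\ast+1)$ the plan is to upgrade this to an interpolation between $L^2$ on $B_r(y_l)$ and a fractional Sobolev bound inherited from $\hat H$. Indeed, inspecting the proof of Lemma~\ref{embedding_by_fourier} one sees that the Young inequality \eqref{vorauss_fuer_einbettung} does more than produce an $L^{\bar q}(D)$ estimate: it exhibits $\mc S\tilde u$ as an element of a (possibly anisotropic) fractional Sobolev space continuously embedded in $L^{\bar q}(D)$ for every $\bar q<p^\ast+1$. In cases (V2) and (V3) this is the isotropic space $H^{\gamma/2}(D)$ (cf.\ Remark~\ref{crit_sob}); in case (V1) it is the anisotropic fractional Sobolev space corresponding to the Fourier weight $|k|+|k|^{-3}\xi^2$. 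In each case the local Sobolev embedding on the Lipschitz ball $B_r(y_l)$ gives
$$\|\mc S\tilde u\|_{L^{\bar q}(B_r(y_l))}^2\leq C(r,\bar q)\,\|\mc S\tilde u\|_{H^{s}(B_r(y_l))}^2,$$
where $H^s$ stands for the isotropic $H^{\gamma/2}$ in (V2), (V3) and for its anisotropic analog in (V1).

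The summation over $l$ then proceeds by applying finite overlap twice. For the $L^2$-piece of the (Slobodetskii-type) Sobolev norm, the argument used for $\bar q=2$ works verbatim. For the seminorm contribution
$$\iint_{B_r(y_l)\times B_r(y_l)}\frac{|\mc S\tilde u(z)-\mc S\tilde u(z')|^2}{|z-z'|^{n+2s}}\,dz\,dz',$$
the key observation is that for any fixed pair $(z,z')$ the set $\{l: z,z'\in B_r(y_l)\}$ has cardinality bounded by the overlap constant of Lemma~\ref{UeberdeckmitvierLemma}, since any such $y_l$ must lie in $B_r(z)\cap B_r(z')$. Consequently $\sum_l\|\mc S\tilde u\|_{H^s(B_r(y_l))}^2 \leq C\|\mc S\tilde u\|_{H^s(D)}^2\leq C\|\tilde u\|_{\mc H}^2$, from which the lemma follows.

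The main technical delicacy is case (V1): since the natural regularity inherited isotropically from $\hat H$ reaches only $H^{1/4}(D)$, one truly needs the anisotropic Sobolev structure to cover the full range $\bar q<3$. I expect to handle this either by formulating an anisotropic Slobodetskii seminorm (whose summation over the cover is argued exactly as above, because the two-sided finite overlap is purely geometric) or, alternatively, by replacing the pointwise localization with a smooth cut-off $\chi_l$ supported in $B_{2r}(y_l)$ and reapplying the Fourier-analytic estimates of the proof of Theorem~\ref{HauptresultatSection5.5} directly to $\chi_l\,\mc S\tilde u$.
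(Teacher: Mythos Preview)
Your treatment of $\bar q=2$ is fine, and for cases (V2) and (V3) the Slobodetskii--seminorm strategy can in principle be carried through, though it is more laborious than necessary: you would first have to pass from the Fourier-side quantity $\sum_k\int|\hat u_k(\xi)|^2(|\xi|^\gamma+|k|^\gamma)\,d\xi$ (which is what the proof of Theorem~\ref{HauptresultatSection5.5} actually controls) to the Slobodetskii seminorm on the periodic strip $D$, and then invoke a local Sobolev embedding on each ball.

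The genuine gap is case (V1). Your first option, an ``anisotropic Slobodetskii seminorm'', is not worked out, and the Fourier weight $|k|+|k|^{-3}\xi^2$ does not obviously correspond to a two-point integral seminorm that localizes the way the isotropic one does. Your second option, a cut-off $\chi_l$ supported in $B_{2r}(y_l)$, has a more concrete problem: since $\chi_l$ depends on \emph{both} $x$ and $t$, multiplication by $\chi_l$ destroys the product structure $\mc S\tilde u=\sum_k u_k(x)e^{ik\omega t}$. The $k$-th time-Fourier coefficient of $\chi_l\,\mc S\tilde u$ is a convolution $\sum_m\hat\chi_{l,m}(x)\,u_{k-m}(x)$, and controlling the $\hat H$-norm of this (which mixes the weights $|k|^\gamma$ and $|k|^\delta$ across modes) is not a matter of ``reapplying the Fourier-analytic estimates directly''.

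The paper avoids both difficulties with a single device: the cut-off $\phi_l$ is taken to depend on $x$ \emph{only}, with $\phi_l\equiv1$ on $[x_l-r,x_l+r]$ and $\supp\phi_l\subset(x_l-2r,x_l+2r)$. Then $\tilde u\phi_l:=(u_k\phi_l)_{k\in\Z_{\odd}}$ is again an element of $\hat H$, one has $\|\mc S\tilde u\|_{L^{\bar q}(B_r(y_l))}\leq\|\mc S(\tilde u\phi_l)\|_{L^{\bar q}(D)}\leq C\|\tilde u\phi_l\|_{\hat H}$ by the global embedding of Lemmas~\ref{hausdorff_young}--\ref{embedding_by_fourier}, and the summation $\sum_l\|\tilde u\phi_l\|_{\hat H}^2\leq C\|\tilde u\|_{\hat H}^2$ follows directly from the definition of the $\hat H$-norm, the product rule $(u_k\phi_l)'=u_k'\phi_l+u_k\phi_l'$, and finite overlap of the intervals $(x_l-2r,x_l+2r)$. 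This handles all three cases (V1)--(V3) uniformly with no Slobodetskii machinery and no case distinction between isotropic and anisotropic regularity.
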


\begin{proof} By the embedding $I:\mc{H}\to \hat H$ form Lemma~\ref{embedding} with $\hat H$ as defined at the beginning of Section~\ref{sec:proof_for_S} it is sufficient to prove 
$$
\sum_{l\in\N} \Vert (\mc{S}_1\circ\mc{S}_2)\tilde{u}\Vert^2_{L^{\bar q}(B_r(y_l))} \leq C_r \Vert \tilde{u}\Vert^2_{\hat H}
$$
for all $\tilde{u}\in\mc{H}$. Due to Lemma~\ref{UeberdeckmitvierLemma} we can distinguish balls $B_r(y_l), l\in\N$ which are completely in $D$ and others which protrude from $D$. However, since the function $(\mc{S}_1\circ\mc{S}_2)\tilde u$ is periodic in the second variable and hence its norm in $L^{\bar q}(B_r(y_l))$ is invariant under translations in $t$-direction, the distinction between these balls it not needed for proving the claimed estimate. We abbreviate $\tilde{D}_r\coloneqq \bigcup_{l\in\N} B_r(y_l)$ with $y_l=(x_l,t_l)$ with $x_l\in r\Z$. Let $\phi_l \in C_c^\infty(\R)$ be a cut-off function with $0 \leq \phi_l\leq 1$, $|\phi_l'|\leq 2/r$, $\supp\phi_l\subset (x_l-2r,x_l+2r)$ and $\phi_l \equiv 1$ on $[x_l-r,x_l+r]$. If we define $\tilde u\phi_l := (u_k\phi_l)_{k\in \Z_{\odd}}$ then clearly $\tilde u\phi_l\in \hat H$. Moreover, since 
$$
\|(\mc{S}_1\circ \mc{S}_2) \tilde u\|_{L^{\bar q}(B_r(y_l))} \leq \|(\mc{S}_1\circ \mc{S}_2) (\tilde u\phi_l)\|_{L^{\bar q}(D)}\leq C\|\tilde u\phi_l\|_{\hat H}
$$
due to Lemma~\ref{hausdorff_young}, Lemma~\ref{embedding_by_fourier}, it remains to show the estimate $\sum_{l\in \N}\|\tilde u\phi_l\|_{\hat H}^2\leq C\|\tilde u\|_{\hat H}^2$. By the chain rule we have $\|(u_k\phi_l)'\|_{L^2(\R)}^2 \leq 2\|u_k'\phi_l\|^2_{L^2(\R)} + 2 \|u_k\phi_l'\|^2_{L^2(\R)}$ and hence the definition of the norm in $\hat H$ (note that $\delta<0 <\gamma$) implies 
\begin{align*}
\sum_{l\in\N} \|\tilde u\phi_l\|_{\hat H}^2 &= \sum_{l\in \N}\sum_{k\in \Z_{\odd}} |k|^\gamma \|u_k\phi_l\|_{L^2(\R)}^2 + |k|^\delta \|(u_k\phi_l)'\|_{L^2(\R)}^2 \\
& \leq \sum_{l\in \N}\sum_{k\in \Z_{\odd}} |k|^\gamma \|u_k\|_{L^2(x_l-2r,x_l+2r)}^2 +  2|k|^\delta \|u_k'\|_{L^2(x_l-2r,x_l+2r)}^2 + \frac{8}{r^2}|k|^\delta \|u_k\|_{L^2(x_l-2r,x_l+2r)}^2 \\
& \leq C_r \sum_{l\in \N}\sum_{k\in \Z_{\odd}} |k|^\gamma \|u_k\|_{L^2(x_l-2r,x_l+2r)}^2 + |k|^\delta \|u_k'\|_{L^2(x_l-2r,x_l+2r)}^2 \\
& \leq 2C_r \sum_{k\in \Z_{\odd}} |k|^\gamma \|u_k\|_{L^2(\R)}^2 + |k|^\delta \|u_k'\|_{L^2(\R)}^2 = 2 C_r \|\tilde u\|_{\hat H}^2.
\end{align*}
which finishes the proof.
\end{proof}

\noindent
\textit{Proof of Lemma~\ref{ConccompLemma}:} W.l.o.g. we may assume $r\in (0,T)$. Fix $\tilde{u}\in\mc{H}$ and $y\in D$. Let $q<\bar q<p^\ast+1$. By H\"older interpolation for $s\in (q,\bar q)$ there is $\lambda=\frac{s-q}{\bar q-q}\cdot\frac{\bar q}{s}$ such that
\begin{align*}
\Vert \mc{S}\tilde{u}\Vert_{L^s(B_r(y))} \leq \Vert \mc{S}\tilde{u}\Vert^{1-\lambda}_{L^q(B_r(y))} \Vert \mc{S}\tilde{u}\Vert^\lambda_{L^{\bar q}(B_r(y))}.
\end{align*}
For $s=2+q\frac{\bar q-2}{\bar q}$ we have $\lambda=\frac{2}{s}$ and in particular 
\begin{align} \label{Gleichungfuerfesty}
\Vert \mc{S}\tilde{u}\Vert^s_{L^s(B_r(y))}\leq \Vert \mc{S}\tilde{u}\Vert^{(1-\lambda)s}_{L^q(B_r(y))} \Vert \mc{S}\tilde{u}\Vert^2_{L^{\bar q}(B_r(y))} \leq \Vert \mc{S}\tilde{u}\Vert^2_{L^{\bar q}(B_r(y))} \sup_{z\in D} \Vert \mc{S}\tilde{u}\Vert^{(1-\lambda)s}_{L^q(B_r(z))}.
\end{align}
We now choose the sequence $(y_l)_{l\in\N}$ from Lemma~\ref{UeberdeckmitvierLemma}, then use \eqref{Gleichungfuerfesty} for $y=y_l$ and perform a summation over $l\in\N$. Due to Lemma~\ref{UeberdeckmitvierLemma} we obtain
\begin{align*}
\Vert \mc{S}\tilde{u}\Vert^s_{L^s(D)} \leq \sum_{l\in\N} \Vert \mc{S}\tilde{u}\Vert^s_{L^s(B_r(y_l))} \leq \sum_{l\in\N} \Vert \mc{S}\tilde{u}\Vert^2_{L^{\bar q}(B_r(y_l))} \sup_{z\in D} \Vert \mc{S}\tilde{u}\Vert^{(1-\lambda)s}_{L^q(B_r(z))}.
\end{align*}
Lemma~\ref{FolgeLvierdomainbeschr} guarantees the existence of $C=C(r,\bar q)>0$ s.t. $\sum_{l\in\N} \Vert \mc{S}\tilde{u}\Vert^2_{L^{\bar q}(B_r(y_l))} \leq C \Vert \tilde{u}\Vert^2_{\mc{H}}$. Thus,
\begin{align} \label{conccompsummary}
\Vert \mc{S}\tilde{u}\Vert^s_{L^s(D)} \leq C \Vert\tilde{u}\Vert^2_{\mc{H}} \sup_{z\in D} \Vert \mc{S}\tilde{u}\Vert^{(1-\lambda)s}_{L^q(B_r(z))}
\end{align}
for any $\tilde{u}\in \mc{H}$. Plugging $(\tilde{u}_n)_{n\in\N}$ into \eqref{conccompsummary}, assumption \eqref{Bedconccomp} entails $\Vert \mc{S}\tilde{u}_n\Vert_{L^s(D)} \to 0$ as $n\to\infty$. The desired result $\Vert \mc{S}\tilde{u}_n\Vert_{L^{\tilde{q}}(D)}$ as $n\to\infty$ for all $\tilde{q}\in (2,\bar q)$ then follows by H\"older interpolation. Since $\bar q\in (q,p^\ast+1)$ was arbitrary, Lemma~\ref{ConccompLemma} is proven.\qed

\section*{Acknowledgement}
We gratefully acknowledge financial support by the Deutsche Forschungsgemeinschaft (DFG) through CRC 1173.

\end{document}